
\documentclass[11pt]{amsart}%
\usepackage{amsmath}
\usepackage{amssymb}
\usepackage{amsthm}
\usepackage[mathscr]{eucal}
\usepackage{mathrsfs}
\usepackage{psfrag}
\usepackage{fullpage}
\usepackage{color}
\usepackage{eucal}
\usepackage[dvips]{graphicx}
\usepackage{amsfonts}%
\usepackage{amsaddr}
\setcounter{MaxMatrixCols}{30}
\providecommand{\U}[1]{\protect\rule{.1in}{.1in}}
\newtheorem{proposition}{Proposition}[section]
\newtheorem{theorem}[proposition]{Theorem}
\newtheorem{corollary}[proposition]{Corollary}
\newtheorem{lemma}[proposition]{Lemma}

\newtheorem{definition}[proposition]{Definition}
\newtheorem{remark}[proposition]{Remark}

\newtheorem{condition}[proposition]{Condition}

\numberwithin{equation}{section}
\numberwithin{proposition}{section}

\begin{document}
\title{Large Deviations for Multiscale Diffusions via Weak Convergence Methods}
\author{Paul Dupuis and Konstantinos Spiliopoulos}
\address{Lefschetz Center for Dynamical Systems, Division of Applied Mathematics, Brown University, Providence, RI 02912}
\email{Paul\_Dupuis@brown.edu}
\email{kspiliop@dam.brown.edu}
\thanks{Research of P.D. supported in part by the National Science Foundation
(DMS-0706003, DMS-1008331), the Department of Energy (DE-SCOO02413), and the
Army Research Office (W911NF-09-1-0155).}
\thanks{Research of K.S. supported in part by the Department of Energy (DE-SCOO02413).}
\maketitle

\begin{abstract}
We study the large deviations principle for locally periodic stochastic
differential equations with small noise and fast oscillating coefficients.
There are three possible regimes depending on how fast the intensity of the
noise goes to zero relative to the homogenization parameter. We use weak
convergence methods which provide convenient representations for the action
functional for all three regimes. Along the way we study weak limits of
related controlled SDEs with fast oscillating coefficients and derive, in some
cases, a control that nearly achieves the large deviations lower bound at the
prelimit level. This control is useful for designing efficient importance
sampling schemes for multiscale diffusions driven by small noise.

\end{abstract}
\keywords{Keywords: Large deviations, multiscale diffusions, importance sampling, rugged
energy landscape.}

\section{Introduction}

The purpose of this paper is to obtain large deviation properties of
stochastic differential equations with rapidly fluctuating coefficients in a
form that can be used for accelerated Monte Carlo. Such results are not
available in the literature. We use methods from weak convergence and
stochastic control. Consider the $d$-dimensional process $X^{\epsilon}%
\doteq\{X_{t}^{\epsilon},0\leq t\leq1\}$ satisfying the stochastic
differential equation (SDE)
\begin{equation}
dX_{t}^{\epsilon}=\left[  \frac{\epsilon}{\delta}b\left(  X_{t}^{\epsilon
},\frac{X_{t}^{\epsilon}}{\delta}\right)  +c\left(  X_{t}^{\epsilon}%
,\frac{X_{t}^{\epsilon}}{\delta}\right)  \right]  dt+\sqrt{\epsilon}%
\sigma\left(  X_{t}^{\epsilon},\frac{X_{t}^{\epsilon}}{\delta}\right)
dW_{t},\hspace{0.2cm}X_{0}^{\epsilon}=x_{0}, \label{Eq:LDPandA1}%
\end{equation}
where $\delta=\delta(\epsilon)\downarrow0$ as $\epsilon\downarrow0$ and
$W_{t}$ is a standard $d$-dimensional Wiener process. The functions
$b(x,y),c(x,y)$ and $\sigma(x,y)$ are assumed to be smooth according to
Condition \ref{A:Assumption1} and periodic with period 1 in every direction
with respect to the second variable.

If $\delta$ is of order $1$ while $\epsilon$ tends to zero, large deviations
theory tells how quickly (\ref{Eq:LDPandA1}) converges to the deterministic
ODE given by setting $\epsilon$ equal to zero. If $\epsilon$ is of order $1$
while $\delta$ tends to zero, homogenization occurs and one obtains an
equation with homogenized coefficients. If the two parameters go to zero
together then one expects different behaviors depending on how fast $\epsilon$
goes to zero relative to $\delta$.

Using the weak convergence approach of \cite{DupuisEllis}, we investigate the
large deviations principle (LDP) of $X^{\epsilon}$ under the following three
regimes:
\begin{equation}
\lim_{\epsilon\downarrow0}\frac{\epsilon}{\delta}=%
\begin{cases}
\infty & \text{Regime 1,}\\
\gamma\in(0,\infty) & \text{Regime 2,}\\
0 & \text{Regime 3.}%
\end{cases}
\label{Def:ThreePossibleRegimes}%
\end{equation}
The weak convergence approach results in a convenient representation formula
for the large deviations action functional (otherwise known as the rate
function) for all three regimes (Theorem \ref{T:MainTheorem2}). It is based on
the representation Theorem \ref{T:RepresentationTheorem}, which in this case
involves controlled SDE's with fast oscillating coefficients. Along the way,
we obtain a uniform proof of convergence of the underlying controlled SDE
(CSDE) in all three regimes (Theorem \ref{T:MainTheorem1}). In addition, in
some cases we construct a control that nearly achieves the large deviations
lower bound at the prelimit level. This control is useful, in particular, for
the design of efficient importance sampling schemes. The particular use of the
control will appear elsewhere.

A motivation for this work comes from chemical physics and biology, and in
particular from the dynamical behavior of proteins such as their folding and
binding kinetics. It was suggested long ago (e.g., \cite{LifsonJackson}) that
the potential surface of a protein might have a hierarchical structure with
potential minima within potential minima. The underlying energy landscapes of
certain biomolecules can be rugged (i.e., consist of many minima separated by
barriers of varying heights) due to the presence of multiple energy scales
associated with the building blocks of proteins. Roughness of the energy
landscapes that describe proteins has numerous effects on their folding and
binding as well as on their behavior at equilibrium. Often, these phenomena
are described mathematically by diffusion in a rough potential where a smooth
function is superimposed by a rough function (see Figure \ref{F:Figure1}). A
representative, but by no means complete, list of references is
\cite{Ansari,BryngelsonOnuchicWolynes, HyeonThirumalai, MondalGhosh,
SavenWangWolynes, Zwanzig}. The situation investigated in these papers is only
a special case of equation (\ref{Eq:LDPandA1}) with $\sigma(x,y)=\sqrt{2D}$,
$b(x,y)=-\frac{2D}{k_{\beta}T}\nabla Q(y)$ and $c(x,y)=-\frac{2D}{k_{\beta}%
T}\nabla V(y)$, where $k_{\beta}$ is the Boltzmann constant and $T$ is the
temperature. The questions of interest in these papers are related to the
effect of taking $\delta\downarrow0$ with $\epsilon$ small but fixed. This is
almost the same to requiring that $\delta$ goes to $0$ much faster than
$\epsilon$ does. Our goal is to study the related large deviations principle,
so we take $\epsilon\downarrow0$ as well. It will become clear that the
formula for the effective diffusivity (denoted by $q$ in Corollary
\ref{C:MainCorollary2}) that appears in the aforementioned chemistry and
biology literature is obtained under Regime $1$.

Singularly perturbed stochastic control problems and related large
deviations problems have been studied elsewhere (see for example
\cite{Baldi, BorkarGaitsgory, FS, GaitsgoryNguyen, Kushner,
Kushner1, Lipster, PardouxVeretennikov1, Veretennikov,
VeretennikovSPA2000} and the references therein). In particular, in
\cite{FS} the authors study the large deviation problem for periodic
coefficients, i.e., $b(x,y)=b(y),c(x,y)=c(y)$ and
$\sigma(x,y)=\sigma(y)$, using other methods. In \cite{FS}, the
authors provide an explicit formula for the action functional in
Regime $1$, whereas in Regimes $2$ and $3$ the action functional is
in terms of solutions to variational problems. In the present paper,
we derive the same explicit expression for the action functional in
Regime $1$. In addition, we also obtain the related control that
nearly achieves the LDP lower bound at the prelimit level. For
Regimes $2$ and $3$ we provide an alternative expression, from
\cite{FS}, for the action functional (Theorem \ref{T:MainTheorem2}).
It follows from these expressions that Regime $3$ can be seen as a
limiting case of Regime $2$ by simply setting $\gamma=0$, though we
are able to prove the large deviation lower bound in Regime 3 only
under additional conditions. For both regimes we derive explicit
expressions for the action functional in special cases of interest,
and in Regime 2 obtain a corresponding control that nearly achieves
the LDP lower bound. Note that the extension of the results of
\cite{FS} for Regime 2 to include the $x-$dependence is non-trivial,
since several smoothness properties of the local rate function need
to be proven (see Subsection \ref{S:BoundedOptimalControlRegime2}
for details). Apart from \cite{FS}, Regime $2$ has also been studied
in \cite{Kushner1, Veretennikov, VeretennikovSPA2000} under various
assumptions and dependencies of the coefficients of the system on
the slow and fast motion. In \cite{FS, Veretennikov,
VeretennikovSPA2000}, the local rate function is characterized as
the Legendre-Fenchel transform of the limit of the normalized
logarithm of an exponential moment or of the first eigenvalue of an
associated operator. In the present paper, we provide a direct
expression for the local rate function (Theorem
\ref{T:MainTheorem4Regime2}).

We note here that in the case of Regime $1$ one can weaken the periodicity
assumption, using the results of \cite{PardouxVeretennikov1} and the
methodology of the present paper, and prove an analogous result when the fast
variable takes values in $\mathbb{R}^{d}$. It also seems possible to combine
the methods of the present paper together with results in \cite{KaiseSheu,
BensoussanFrehse} to weaken the periodicity assumption for Regime $2$ as well;
see Remark \ref{R:TheWholeEuclideanSpace} for more details.

The paper is organized as follows. In Section \ref{S:Main}, we establish
notation, review some preliminary results and state the general large
deviations result (Theorem \ref{T:MainTheorem2}). Section \ref{S:Limit}
considers the weak limit of the associated controlled stochastic differential
equations. In Section \ref{S:LowerBoundLowerSemicontinuity} we prove the large
deviations upper bound for all three regimes and the compactness of the level
sets of the rate function. Section \ref{S:LaplacePrincipleRegime1} contains
the proof of the large deviations lower bound (or equivalently Laplace
principle upper bound) for Regime $1$, which completes the proof of the large
deviations principle for Regime $1$. This section also discusses an explicit
expression for a control that nearly achieves the large deviations lower bound
in the prelimit level $(\epsilon>0)$. In Section
\ref{S:LaplacePrincipleRegime2}, we prove the large deviations lower bound for
Regime $2$ and identify a control that nearly achieves this lower bound.
Section \ref{S:LaplacePrincipleRegime3} discusses the large deviations lower
bound principle for Regime $3$ and presents alternative expressions for the
rate function in dimension $1$.

\section{Preliminaries, statement of the main results.}

\label{S:Main}

We work with the canonical filtered probability space $(\Omega,\mathfrak{F}%
,\mathbb{P})$ equipped with a filtration $\mathfrak{F}_{t}$ that satisfies the
usual conditions, namely, $\mathfrak{F}_{t}$ is right continuous and
$\mathfrak{F}_{0}$ contains all $\mathbb{P}$-negligible sets.

In preparation for stating the main results, we recall the concept of a
Laplace principle. Throughout this paper only random variables that take
values in a Polish space are considered. By definition, a rate function on a
Polish space $\mathcal{S}$ maps $\mathcal{S}$ into $[0,\infty]$ and has
compact level sets.

\begin{definition}
\label{Def:LaplacePrinciple} Let $\{X^{\epsilon},\epsilon>0\}$ be a family of
random variables taking values in $\mathcal{S}$ and let $I$ be a rate function
on $\mathcal{S}$. We say that $\{X^{\epsilon},\epsilon>0\}$ satisfies the
Laplace principle with rate function $I$ if for every bounded and continuous
function $h:\mathcal{S}\rightarrow\mathbb{R}$
\begin{equation*}
\lim_{\epsilon\downarrow0}-\epsilon\ln\mathbb{E}\left[  \exp\left\{
-\frac{h(X^{\epsilon})}{\epsilon}\right\}  \right]  =\inf_{x\in\mathcal{S}%
}\left[  I(x)+h(x)\right]  . \label{Eq:LaplacePrinciple}%
\end{equation*}

\end{definition}

A Laplace principle is equivalent to the corresponding large deviations
principle with the same rate function (if the definition of a rate function
includes the requirement of compact level sets, see Theorems 2.2.1 and 2.2.3
in \cite{DupuisEllis}). Thus instead of proving a large deviations principle
for $\{X^{\epsilon}\}$ we prove a Laplace principle for $\{X^{\epsilon}\}$.

Regarding the SDE (\ref{Eq:LDPandA1}) we impose the following condition.

\begin{condition}
\label{A:Assumption1}

\begin{enumerate}
\item The functions $b(x,y),c(x,y),\sigma(x,y)$ are Lipschitz continuous and
bounded in both variables and periodic with period $1$ in the second variable
in each direction. In the case of Regime $1$ we additionally assume that they
are $C^{1}(\mathbb{R}^{d})$ in $y$ and $C^{2}(\mathbb{R}^{d})$ in $x$ with all
partial derivatives continuous and globally bounded in $x$ and $y $.

\item The diffusion matrix $\sigma\sigma^{T}$ is uniformly nondegenerate.

\end{enumerate}
\end{condition}

The regularity conditions imposed are stronger than necessary, but they are
assumed to simplify the exposition. See Remark \ref{R:RegularityCondition} for
some further details on this. For notational convenience we define the
operator $\cdot:\cdot$, where for two matrices $A=[a_{ij}],B=[b_{ij}]$
\[
A:B\doteq\sum_{i,j}a_{ij}b_{ij}.
\]
Under Regime $1$, we also impose the following condition.

\begin{condition}
\label{A:Assumption2} Let $\mu(dy|x)$ be the unique invariant measure
corresponding to the operator
\begin{equation*}
\mathcal{L}_{x}^{1}=b(x,y)\cdot\nabla_{y}+\frac{1}{2}\sigma(x,y)\sigma
(x,y)^{T}:\nabla_{y}\nabla_{y} \label{OperatorRegime1}%
\end{equation*}
equipped with periodic boundary conditions in $y$ ($x$ is being treated as a
parameter here). Under Regime 1, we assume the standard centering condition
(see \cite{BLP}) for the unbounded drift term $b$:
\[
\int_{\mathcal{Y}}b(x,y)\mu(dy|x)=0,
\]
where $\mathcal{Y}=\mathbb{T}^{d}$ denotes the $d$-dimensional torus.
\end{condition}

We note that under Conditions \ref{A:Assumption1} and \ref{A:Assumption2}, for
each $\ell\in\{1,\ldots,d\}$ there is a unique, twice differentiable function
$\chi_{\ell}(x,y)$ that is one periodic in every direction in $y$, that solves
the following cell problem (for a proof see \cite{BLP}, Theorem 3.3.4):
\begin{equation}
\mathcal{L}_{x}^{1}\chi_{\ell}(x,y)=-b_{\ell}(x,y),\quad\int_{\mathcal{Y}}%
\chi_{\ell}(x,y)\mu(dy|x)=0. \label{Eq:CellProblem}%
\end{equation}
We write $\chi=(\chi_{1},\ldots,\chi_{d})$.

\vspace{0.4cm}

Our tool for proving the Laplace principle will be the weak convergence
approach of \cite{DupuisEllis}. The following representation theorem is
essential for this approach. A proof of this theorem is given in
\cite{BoueDupuis}. The control process can depend on $\epsilon$ but this is
not always denoted explicitly. In the representation and elsewhere we take
$T=1$. Analogous results hold for arbitrary $T\in(0,\infty)$.

\begin{theorem}
\label{T:RepresentationTheorem} Assume Condition \ref{A:Assumption1}, and
given $\epsilon>0$ let $X^{\epsilon}$ be the unique strong solution to
(\ref{Eq:LDPandA1}). Then for any bounded Borel measurable function $h$
mapping $\mathcal{C}([0,1];\mathbb{R}^{d})$ into $\mathbb{R}$
\[
-\epsilon\ln\mathbb{E}_{x_{0}}\left[  \exp\left\{  -\frac{h(X^{\epsilon}%
)}{\epsilon}\right\}  \right]  =\inf_{u\in\mathcal{A}}\mathbb{E}_{x_{0}%
}\left[  \frac{1}{2}\int_{0}^{1}\left\Vert u_{t}\right\Vert ^{2}dt+h(\bar
{X}^{\epsilon})\right]  ,
\]
where $\mathcal{A}$ is the set of all $\mathfrak{F}_{t}-$progressively
measurable $d$-dimensional processes $u\doteq\{u_{t},0\leq t\leq1\}$
satisfying
\begin{equation*}
\mathbb{E}\int_{0}^{1}\left\Vert u_{t}\right\Vert ^{2}dt<\infty,
\label{A:AdmissibleControls}%
\end{equation*}
and $\bar{X}^{\epsilon}$ is the unique strong solution to
\begin{equation}
d\bar{X}_{t}^{\epsilon}=\left[  \frac{\epsilon}{\delta}b\left(  \bar{X}%
_{t}^{\epsilon},\frac{\bar{X}_{t}^{\epsilon}}{\delta}\right)  +c\left(
\bar{X}_{t}^{\epsilon},\frac{\bar{X}_{t}^{\epsilon}}{\delta}\right)  \right]
dt+\sigma\left(  \bar{X}_{t}^{\epsilon},\frac{\bar{X}_{t}^{\epsilon}}{\delta
}\right)  u_{t}dt+\sqrt{\epsilon}\sigma\left(  \bar{X}_{t}^{\epsilon}%
,\frac{\bar{X}_{s}^{\epsilon}}{\delta}\right)  dW_{t},\hspace{0.2cm}\bar
{X}_{0}^{\epsilon}=x_{0}. \label{Eq:LDPandA2}%
\end{equation}

\end{theorem}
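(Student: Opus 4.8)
The plan is to derive the identity directly from the variational representation for exponential functionals of a Brownian motion established in \cite{BoueDupuis}, combined with a rescaling of the control and a Girsanov change of measure; in this sense the argument is an adaptation of \cite{BoueDupuis} to the coefficients appearing in (\ref{Eq:LDPandA1}).

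First I would record the well-posedness facts. Under Condition \ref{A:Assumption1} the functions $b,c,\sigma$ are bounded and Lipschitz, so (\ref{Eq:LDPandA1}) has a unique strong solution and, by Yamada--Watanabe, there is a Borel measurable map $\mathcal{G}^{\epsilon}\colon\mathcal{C}([0,1];\mathbb{R}^{d})\to\mathcal{C}([0,1];\mathbb{R}^{d})$ with $X^{\epsilon}=\mathcal{G}^{\epsilon}(W)$ almost surely. The same bounds show, via a Gronwall argument (using $\int_{0}^{1}\|u_{t}\|\,dt<\infty$ a.s.) for pathwise uniqueness and a localized Girsanov argument for existence, that for every $\mathfrak{F}_{t}$-progressively measurable $u$ with $\mathbb{E}\int_{0}^{1}\|u_{t}\|^{2}\,dt<\infty$ equation (\ref{Eq:LDPandA2}) likewise has a unique strong solution.

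Next I would apply the representation of \cite{BoueDupuis} to the bounded Borel measurable functional $h/\epsilon$ (bounded because $h$ is), which yields
\[
-\ln\mathbb{E}_{x_{0}}\!\left[\exp\!\left\{-\frac{h(X^{\epsilon})}{\epsilon}\right\}\right]=\inf_{v}\mathbb{E}_{x_{0}}\!\left[\frac{1}{2}\int_{0}^{1}\|v_{t}\|^{2}\,dt+\frac{1}{\epsilon}\,h\!\left(\mathcal{G}^{\epsilon}\!\left(W+\int_{0}^{\cdot}v_{s}\,ds\right)\right)\right],
\]
the infimum being over $\mathfrak{F}_{t}$-progressively measurable $v$ of finite energy. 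Multiplying through by $\epsilon$ and substituting $v_{t}=u_{t}/\sqrt{\epsilon}$ --- a bijection of the admissible class onto itself that converts $\frac{\epsilon}{2}\int_{0}^{1}\|v_{t}\|^{2}\,dt$ into $\frac{1}{2}\int_{0}^{1}\|u_{t}\|^{2}\,dt$ --- it then remains to identify $\mathcal{G}^{\epsilon}\!\left(W+\frac{1}{\sqrt{\epsilon}}\int_{0}^{\cdot}u_{s}\,ds\right)$ with $\bar{X}^{\epsilon}$. Writing $\tilde{W}_{t}=W_{t}+\frac{1}{\sqrt{\epsilon}}\int_{0}^{t}u_{s}\,ds$, one has $\sqrt{\epsilon}\,d\tilde{W}_{t}=\sqrt{\epsilon}\,dW_{t}+u_{t}\,dt$, so $\mathcal{G}^{\epsilon}(\tilde{W})$ solves exactly (\ref{Eq:LDPandA2}); since $\tilde{W}$ is a Brownian motion under an equivalent probability measure by Girsanov's theorem, this substitution into $\mathcal{G}^{\epsilon}$ is legitimate, and pathwise uniqueness for (\ref{Eq:LDPandA2}) gives $\mathcal{G}^{\epsilon}(\tilde{W})=\bar{X}^{\epsilon}$ almost surely.

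The step that requires the most care --- and which I would treat by quoting \cite{BoueDupuis} rather than by direct estimates --- is the manipulation of the solution map $\mathcal{G}^{\epsilon}$ under the shift $W\mapsto\tilde{W}$: a priori $\mathcal{G}^{\epsilon}$ is defined only $\mathbb{P}$-almost surely, and one must justify that evaluating it at the shifted (and only equivalent-in-law) process still produces the strong solution of the shifted SDE. This is precisely where Yamada--Watanabe/pathwise uniqueness and the absolute continuity furnished by Girsanov enter. The remaining ingredients --- boundedness of $h/\epsilon$, bijectivity of the rescaling $u\leftrightarrow\sqrt{\epsilon}\,v$, and well-posedness of (\ref{Eq:LDPandA2}) --- are routine.
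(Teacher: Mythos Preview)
Your proposal is correct and matches the paper's treatment: the paper does not prove this theorem at all but simply states that ``a proof of this theorem is given in \cite{BoueDupuis}.'' Your sketch is precisely the standard specialization of the Bou\'e--Dupuis variational formula to the SDE (\ref{Eq:LDPandA1}) via the solution map $\mathcal{G}^{\epsilon}$, the control rescaling $v=u/\sqrt{\epsilon}$, and the Girsanov/pathwise-uniqueness identification of $\mathcal{G}^{\epsilon}(\tilde W)$ with $\bar X^{\epsilon}$, and you correctly flag the only delicate point (legitimacy of evaluating $\mathcal{G}^{\epsilon}$ at the shifted path), which is exactly what \cite{BoueDupuis} handles.
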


Before stating the main results, we need additional notation and definitions.
Let $\mathcal{Z}=\mathbb{R}^{d}$ denote the space in which the control process
takes values.

\begin{definition}
\label{Def:ThreePossibleOperators} For the three possible Regimes $i=1,2,3$
defined in (\ref{Def:ThreePossibleRegimes}) and for $x\in\mathbb{R}^{d}%
,y\in\mathcal{Y}$ and $z\in\mathcal{Z}$, let
\begin{align}
\mathcal{L}_{x}^{1}  &  =b(x,y)\cdot\nabla_{y}+\frac{1}{2}\sigma
(x,y)\sigma(x,y)^{T}:\nabla_{y}\nabla_{y}\nonumber\\
\mathcal{L}_{z,x}^{2}  &  =\left[  \gamma b(x,y)+c(x,y)+\sigma(x,y)z\right]
\cdot\nabla_{y}+\gamma\frac{1}{2}\sigma(x,y)\sigma(x,y)^{T}:\nabla_{y}%
\nabla_{y}\nonumber\\
\mathcal{L}_{z,x}^{3}  &  =\left[  c(x,y)+\sigma(x,y)z\right]  \cdot\nabla
_{y}.\nonumber
\end{align}
For $i=1,2$ we let $\mathcal{D}(\mathcal{L}_{z,x}^{i})=\mathcal{C}%
^{2}(\mathcal{Y})$ and for $i=3$, $\mathcal{D}(\mathcal{L}_{z,x}%
^{3})=\mathcal{C}^{1}(\mathcal{Y})$.
\end{definition}

We also define for Regime $i$ a function $\lambda_{i}(x,y,z)$, $i=1,2,3$, as follows.

\begin{definition}
\label{Def:ThreePossibleFunctions} For the three possible Regimes $i=1,2,3$
defined in (\ref{Def:ThreePossibleRegimes}) and for $x\in\mathbb{R}^{d}%
,y\in\mathcal{Y}$ and $z\in\mathcal{Z}$, define $\lambda_{i}(x,y,z):\mathbb{R}%
^{d}\times\mathcal{Y}\times\mathcal{Z}\rightarrow\mathbb{R}^{d}$ by
\begin{align}
\lambda_{1}(x,y,z)  &  =\left(  I+\frac{\partial\chi}{\partial y}(x,y)\right)
\left(  c(x,y)+\sigma(x,y)z\right) \nonumber\\
\lambda_{2}(x,y,z)  &  =\gamma b(x,y)+c(x,y)+\sigma(x,y)z\nonumber\\
\lambda_{3}(x,y,z)  &  =c(x,y)+\sigma(x,y)z,\nonumber
\end{align}
where $\chi=(\chi_{1},\ldots,\chi_{d})$ is defined by (\ref{Eq:CellProblem})
and $I$ is the identity matrix.
\end{definition}

For a Polish space $\mathcal{S}$, let $\mathcal{P}(\mathcal{S})$ be the space
of probability measures on $\mathcal{S}$. Let $\Delta=\Delta(\epsilon
)\downarrow0$ as $\epsilon\downarrow0$. The role of $\Delta(\epsilon)$ is to
exploit a time-scale separation.
Let $A,B,\Gamma$ be Borel sets of $\mathcal{Z},\mathcal{Y},[0,1]$
respectively. Let $u^{\epsilon}\in\mathcal{A}$ and let $\bar{X}_{s}^{\epsilon
}$ solve (\ref{Eq:LDPandA2}) with $u^{\epsilon}$ in place of $u$. We associate
with $\bar{X}^{\epsilon}$ and $u^{\epsilon}$ a family of occupation measures
$\mathrm{P}^{\epsilon,\Delta}$ defined by
\begin{equation}
\mathrm{P}^{\epsilon,\Delta}(A\times B\times\Gamma)=\int_{\Gamma}\left[
\frac{1}{\Delta}\int_{t}^{t+\Delta}1_{A}(u_{s}^{\epsilon})1_{B}\left(
\frac{\bar{X}_{s}^{\epsilon}}{\delta}\mod 1\right)  ds\right]  dt,
\label{Def:OccupationMeasures2}%
\end{equation}
with the convention that if $s>1$ then $u_{s}^{\epsilon}=0$.

The first result, Theorem \ref{T:MainTheorem1}, deals with the limiting
behavior of the controlled process (\ref{Eq:LDPandA2}) under each of the three
regimes, and uses the notion of a viable pair.

\begin{definition}
\label{Def:ViablePair} A pair $(\psi,\mathrm{P})\in\mathcal{C}%
([0,1];\mathbb{R}^{d})\times\mathcal{P}(\mathcal{Z}\times\mathcal{Y}%
\times\lbrack0,1])$ will be called viable with respect to $(\lambda
,\mathcal{L})$, or simply viable if there is no confusion, if the following
are satisfied. The function $\psi_{t}$ is absolutely continuous, $\mathrm{P}$
is square integrable in the sense that $\int_{\mathcal{Z}\times\mathcal{Y}%
\times\lbrack0,1]}\left\Vert z\right\Vert ^{2}\mathrm{P}(dzdyds)<\infty$, and
the following hold for all $t\in\lbrack0,1]$:
\begin{equation}
\psi_{t}=x_{0}+\int_{\mathcal{Z}\times\mathcal{Y}\times\lbrack0,t]}%
\lambda(\psi_{s},y,z)\mathrm{P}(dzdyds),
\label{Eq:AccumulationPointsProcessViable}%
\end{equation}
for every $f\in\mathcal{D}(\mathcal{L})$%
\begin{equation}
\int_{0}^{t}\int_{\mathcal{Z}\times\mathcal{Y}}\mathcal{L}_{z,\psi_{s}%
}f(y)\mathrm{P}(dzdyds)=0, \label{Eq:AccumulationPointsMeasureViable}%
\end{equation}
and
\begin{equation}
\mathrm{P}(\mathcal{Z}\times\mathcal{Y}\times\lbrack0,t])=t.
\label{Eq:AccumulationPointsFullMeasureViable}%
\end{equation}
We write $(\psi,\mathrm{P})\in\mathcal{V}_{(\lambda,\mathcal{L})}$ or simply
$(\psi,\mathrm{P})\in\mathcal{V}$ if there is no confusion.
\end{definition}

Equation (\ref{Eq:AccumulationPointsFullMeasureViable}) implies that the last
marginal of $\mathrm{P}$ is Lebesgue measure, and hence $\mathrm{P}$ can be
decomposed in the form $\mathrm{P}(dzdydt)=\mathrm{P}_{t}(dzdy)dt$. Equations
(\ref{Eq:AccumulationPointsFullMeasureViable}) and
(\ref{Eq:AccumulationPointsMeasureViable}) then imply that, for a choice of
the kernel $\mathrm{P}_{t}(dzdy)$, $\mathrm{P}_{t}(\mathcal{Z}\times
\mathcal{Y})=1$ and
\[
\int_{\mathcal{Z}\times\mathcal{Y}}\mathcal{L}_{z,\psi_{t}}f(y)\mathrm{P}%
_{t}(dzdy)=0,
\]
and by (\ref{Eq:AccumulationPointsProcessViable}) for a.e. $t\in\lbrack0,1] $
\begin{equation*}
\dot{\psi}_{t}=\int_{\mathcal{Z}\times\mathcal{Y}}\lambda(\psi_{t}%
,y,z)\mathrm{P}_{t}(dzdy). \label{Eq:AccumulationPointsProcessViableD}%
\end{equation*}
Note that a viable pair depends on the initial condition $\psi_{0}=x_{0}$ as
well. Since this is will be deterministic and fixed throughout the paper, we
frequently omit writing this dependence explicitly.

\begin{theorem}
\label{T:MainTheorem1} Given $x_{0}\in\mathbb{R}^{d}$, consider any family
$\{u^{\epsilon},\epsilon>0\}$ of controls in $\mathcal{A}$ satisfying
\begin{equation*}
\sup_{\epsilon>0}\mathbb{E}\int_{0}^{1}\left\Vert u_{t}^{\epsilon}\right\Vert
^{2}dt<\infty\label{Eq:Ubound}%
\end{equation*}
and assume Condition \ref{A:Assumption1}. In addition, in Regime 1 assume
Condition \ref{A:Assumption2}. Then the family $\{(\bar{X}^{\epsilon
},\mathrm{P}^{\epsilon,\Delta}),\epsilon>0\}$ is tight. Hence given Regime
$i$, $i=1,2,3$, and given any subsequence of $\{(\bar{X}^{\epsilon}%
,\mathrm{P}^{\epsilon,\Delta}),\epsilon>0\}$, there exists a subsubsequence
that converges in distribution with limit $(\bar{X}^{i},\mathrm{P}^{i})$. With
probability $1$, the accumulation point $(\bar{X}^{i},\mathrm{P}^{i})$ is a
viable pair with respect to $(\lambda_{i},\mathcal{L}^{i})$ according to
Definition \ref{Def:ViablePair}, i.e., $(\bar{X}^{i},\mathrm{P}^{i}%
)\in\mathcal{V}_{(\lambda_{i},\mathcal{L}^{i})}$.
\end{theorem}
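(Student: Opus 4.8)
The plan is to establish the claim in three parts: tightness of the family $\{(\bar X^\epsilon,\mathrm{P}^{\epsilon,\Delta})\}$, identification of the limiting pair via the occupation-measure formalism, and verification of the three defining properties of a viable pair in Definition \ref{Def:ViablePair}.

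\emph{Tightness.} The processes $\bar X^\epsilon$ take values in $\mathcal{C}([0,1];\mathbb{R}^d)$, while the occupation measures $\mathrm{P}^{\epsilon,\Delta}$ take values in $\mathcal{P}(\mathcal{Z}\times\mathcal{Y}\times[0,1])$. The $\mathcal{Y}\times[0,1]$ marginals are automatically tight since $\mathcal{Y}=\mathbb{T}^d$ is compact and $[0,1]$ is compact; tightness in the $\mathcal{Z}$ coordinate follows from the uniform $L^2$ bound on $u^\epsilon$ together with a truncation/Chebyshev argument (the bound $\sup_\epsilon \mathbb{E}\int_0^1\|u_t^\epsilon\|^2dt<\infty$ controls $\mathbb{E}\int \|z\|^2\mathrm{P}^{\epsilon,\Delta}(dzdyds)$, so $\{\mathrm{P}^{\epsilon,\Delta}\}$ is tight by the inf-compactness of $z\mapsto\|z\|^2$). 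For $\bar X^\epsilon$, I would verify the Aldous–Kurtz criterion: uniform moment bounds on $\sup_{t\le1}\|\bar X^\epsilon_t\|$ and a modulus-of-continuity estimate. The drift term $\tfrac{\epsilon}{\delta}b$ is the delicate one. In Regimes 2 and 3, $\epsilon/\delta$ stays bounded, so boundedness of $b,c,\sigma$ suffices. In Regime 1, $\epsilon/\delta\to\infty$, so one must exploit the centering condition (Condition \ref{A:Assumption2}) and the corrector $\chi$: apply Itô's formula to $\bar X^\epsilon_t + \tfrac{\epsilon}{\delta}\cdot(\text{something}) $—more precisely, write $\tfrac{\epsilon}{\delta}b = \tfrac{\epsilon}{\delta}(-\mathcal{L}_x^1\chi)$ and absorb the large term into a differential of $\delta\chi(\bar X^\epsilon,\bar X^\epsilon/\delta)$ plus lower-order remainders, using that $\delta\chi\to0$ uniformly. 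This "perturbed test function" manipulation gives the needed tightness and, simultaneously, sets up the identification step.

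\emph{Identification of the limit.} Fix a convergent subsubsequence with limit $(\bar X^i,\mathrm{P}^i)$; by Skorokhod representation I may assume a.s. convergence. Property \eqref{Eq:AccumulationPointsFullMeasureViable}, that $\mathrm{P}^i(\mathcal{Z}\times\mathcal{Y}\times[0,t])=t$, passes to the limit immediately from the definition \eqref{Def:OccupationMeasures2}, since each $\mathrm{P}^{\epsilon,\Delta}$ already has Lebesgue last marginal (up to an $O(\Delta)$ boundary effect near $t=1$ which vanishes). For \eqref{Eq:AccumulationPointsMeasureViable}, the key computation: for $f\in\mathcal{D}(\mathcal{L}^i)=\mathcal{C}^2(\mathcal{Y})$ (or $\mathcal{C}^1$ for $i=3$), apply Itô's formula to $f(\bar X^\epsilon_t/\delta)$. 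The dominant term is $\delta^{-1}$ (Regimes 1,2) or ... depending on the regime; the structure of $\mathcal{L}^i_{z,x}$ in Definition \ref{Def:ThreePossibleOperators} is exactly rigged so that $\delta^{\kappa}\cdot(\text{something}) \,df(\bar X^\epsilon/\delta) = \bigl[\mathcal{L}^i_{u^\epsilon_s,\bar X^\epsilon_s}f\bigr](\bar X^\epsilon_s/\delta)\,ds + \text{martingale} + \text{remainder}$, after multiplying by the appropriate power of $\delta$ (or $\delta^2/\epsilon$, etc.) and integrating in time. Since $f$ is bounded, the left side is $O(\delta^{\kappa})\to0$; the martingale term vanishes in $L^2$ by the same scaling; the remainder is controlled by the moment bounds. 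Rewriting the time integral against $\mathrm{P}^{\epsilon,\Delta}$ (here the averaging window $\Delta$ is used to decouple the slow variable $\bar X^\epsilon_s$ from the fast variable $\bar X^\epsilon_s/\delta$, using continuity of $\bar X^\epsilon$ and a further time-scale separation $\delta\ll\Delta\ll1$) and passing to the limit using a.s. convergence of $\bar X^\epsilon\to\bar X^i$ uniformly and weak convergence $\mathrm{P}^{\epsilon,\Delta}\to\mathrm{P}^i$ yields $\int_0^t\int_{\mathcal{Z}\times\mathcal{Y}}\mathcal{L}^i_{z,\bar X^i_s}f(y)\,\mathrm{P}^i(dzdyds)=0$. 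Continuity and boundedness of the coefficients of $\mathcal{L}^i_{z,x}$ in $x$, plus the $\|z\|^2$-integrability, justify the limit of the integrand.

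\emph{The dynamics \eqref{Eq:AccumulationPointsProcessViable}.} From \eqref{Eq:LDPandA2}, $\bar X^\epsilon_t = x_0 + \int_0^t[\tfrac{\epsilon}{\delta}b + c + \sigma u^\epsilon_s]\,ds + \sqrt{\epsilon}\int_0^t\sigma\,dW_s$. The stochastic integral vanishes in $L^2$ as $\epsilon\to0$. In Regimes 2 and 3, the drift $\int_0^t[\tfrac{\epsilon}{\delta}b + c + \sigma u^\epsilon_s]\,ds$ converges to $\int_0^t\int_{\mathcal{Z}\times\mathcal{Y}}\lambda_i(\bar X^i_s,y,z)\,\mathrm{P}^i(dzdyds)$ directly, again using the $\Delta$-window to write the integrand in terms of the occupation measure and passing to the limit. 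In Regime 1 the term $\tfrac{\epsilon}{\delta}\int_0^t b\,ds$ is $O(\epsilon/\delta)\to\infty$ pointwise, so one cannot pass to the limit naively: this is the main obstacle. The resolution is the corrector identity $b_\ell = -\mathcal{L}_x^1\chi_\ell$. Apply Itô to $\delta\chi_\ell(\bar X^\epsilon_t,\bar X^\epsilon_t/\delta)$; the $\delta^{-1}$-order term in the expansion of the $\nabla_y\nabla_y$ and $\nabla_y$ pieces reproduces $\tfrac{\epsilon}{\delta}(\mathcal{L}_x^1\chi_\ell)(\bar X^\epsilon/\delta) = -\tfrac{\epsilon}{\delta}b_\ell$, so that
\[
\tfrac{\epsilon}{\delta}\int_0^t b_\ell\,ds = -\delta\chi_\ell\big|_0^t + \int_0^t \tfrac{\partial\chi_\ell}{\partial y}(\sigma u^\epsilon_s + c)\,ds + (\text{vanishing martingale and }O(\delta,\sqrt\epsilon)\text{ remainders}).
\]
The boundary term $\delta\chi_\ell(\bar X^\epsilon_t,\cdot)\to0$ uniformly since $\chi$ is bounded. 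Thus $\bar X^\epsilon_t \to x_0 + \int_0^t\int_{\mathcal{Z}\times\mathcal{Y}}(I+\tfrac{\partial\chi}{\partial y})(c+\sigma z)\,\mathrm{P}^1(dzdyds)$, which is exactly $\psi_t = x_0 + \int \lambda_1\,d\mathrm{P}^1$. Passing to the limit in the integrand requires continuity of $\chi$ and $\tfrac{\partial\chi}{\partial y}$ in $x$, which is where the stronger $C^1_y$, $C^2_x$ regularity in Condition \ref{A:Assumption1} and the regularity theory for the cell problem \eqref{Eq:CellProblem} are used. Finally, since the limits identified above are deterministic given the (random) pair $(\bar X^i,\mathrm{P}^i)$, the three properties hold with probability one, so $(\bar X^i,\mathrm{P}^i)\in\mathcal{V}_{(\lambda_i,\mathcal{L}^i)}$ a.s. The hardest technical point throughout is making the time-scale-separation argument (choosing $\Delta=\Delta(\epsilon)$ with $\delta\ll\Delta\ll 1$ and controlling the oscillation of $\bar X^\epsilon_s/\delta$ across a window of length $\Delta$) rigorous and uniform across all three regimes simultaneously.
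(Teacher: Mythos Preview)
Your proposal is correct and follows essentially the same route as the paper: tightness via the corrector $\chi$ in Regime~1 (the paper's display (3.3) is exactly your It\^{o} expansion of $\delta\chi(\bar X^\epsilon,\bar X^\epsilon/\delta)$), the scaling function $g(\epsilon)$ applied to $f(\bar X^\epsilon/\delta)$ for the invariance constraint, and the $\Delta$-window argument (Lemma~3.2 in the paper) to replace time integrals by occupation-measure integrals. The only stylistic difference is that for the dynamics \eqref{Eq:AccumulationPointsProcessViable} the paper packages the limit through a martingale-problem characterization (applying It\^{o} to $\psi_\ell=\chi_\ell f_{x_\ell}$ for a test function $f$), whereas you pass to the limit directly in the SDE representation; both are equivalent here. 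One small slip in your last sentence: it is the oscillation of the \emph{slow} variable $\bar X^\epsilon_s$ (not $\bar X^\epsilon_s/\delta$) that must be controlled on windows of length $\Delta$---the fast variable is allowed to oscillate freely, which is precisely what the occupation measure records.
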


A proof is given in Section \ref{S:Limit}. The following theorem is the main
result of this paper. It asserts that a large deviation principle holds, and
gives a unifying expression for the rate function for all three regimes.

\begin{theorem}
\label{T:MainTheorem2} Let $\{X^{\epsilon},\epsilon>0\}$ be the
unique strong solution to (\ref{Eq:LDPandA1}). Assume Condition
\ref{A:Assumption1} and that we are considering Regime $i$, where
$i=1,2,3$. In Regime 1 assume Condition \ref{A:Assumption2} and in
Regime $3$ assume either that we are in dimension $d=1$, or that
$c(x,y)=c(y)$ and $\sigma(x,y)=\sigma(y)$ for the general
multidimensional case. Define
\begin{equation}
S^{i}(\phi)=\inf_{(\phi,\mathrm{P})\in\mathcal{V}_{(\lambda_{i},\mathcal{L}%
^{i})}}\left[  \frac{1}{2}\int_{\mathcal{Z}\times\mathcal{Y}\times\lbrack
0,1]}\left\Vert z\right\Vert ^{2}\mathrm{P}(dzdydt)\right]  ,
\label{Eq:GeneralRateFunction}%
\end{equation}
with the convention that the infimum over the empty set is $\infty$. Then for
every bounded and continuous function $h$ mapping $\mathcal{C}%
([0,1];\mathbb{R}^{d})$ into $\mathbb{R}$
\begin{equation*}
\lim_{\epsilon\downarrow0}-\epsilon\ln\mathbb{E}_{x_{0}}\left[  \exp\left\{
-\frac{h(X^{\epsilon})}{\epsilon}\right\}  \right]  =\inf_{\phi\in
\mathcal{C}([0,1];\mathbb{R}^{d})}\left[  S^{i}(\phi)+h(\phi)\right]  .
\label{Eq:LaplacePrincipleTheorem}%
\end{equation*}
Moreover, for each $s<\infty$, the set
\begin{equation*}
\Phi_{s}^{i}=\{\phi\in\mathcal{C}([0,1];\mathbb{R}^{d}):S^{i}(\phi)\leq s\}
\label{Def:LevelSets}%
\end{equation*}
is a compact subset of $\mathcal{C}([0,1];\mathbb{R}^{d})$. In other words,
$\{X^{\epsilon},\epsilon>0\}$ satisfies the Laplace principle with rate
function $S^{i}$.
\end{theorem}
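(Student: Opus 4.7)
The plan is to prove the Laplace principle
\[
\lim_{\epsilon\downarrow 0}-\epsilon\ln\mathbb{E}_{x_0}\exp\{-h(X^\epsilon)/\epsilon\}=\inf_{\phi\in\mathcal{C}([0,1];\mathbb{R}^d)}[S^i(\phi)+h(\phi)]
\]
by establishing the two matching inequalities and compactness of the level sets $\Phi_s^i$. The variational representation of Theorem \ref{T:RepresentationTheorem} rewrites the left-hand side as the limit of an infimum over controls $u^\epsilon\in\mathcal{A}$, while Theorem \ref{T:MainTheorem1} identifies the possible weak limits of the controlled pairs $(\bar X^\epsilon,\mathrm{P}^{\epsilon,\Delta})$ as viable pairs; since $S^i$ is by definition the minimum of $\tfrac12\int\|z\|^2\mathrm{P}$ over viable pairs, the two inequalities should follow from these two ingredients plus, for the harder direction, an explicit construction of near-optimal prelimit controls.

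For the Laplace lower bound (equivalently the LDP upper bound), I would select, for each $\epsilon>0$, a control $u^\epsilon\in\mathcal{A}$ that is $\epsilon$-nearly optimal in the representation. Boundedness of $h$ forces $\sup_\epsilon \mathbb{E}\int_0^1\|u^\epsilon_t\|^2 dt<\infty$, so Theorem \ref{T:MainTheorem1} applies: extracting a weakly convergent subsequence and invoking the Skorohod representation, $(\bar X^\epsilon,\mathrm{P}^{\epsilon,\Delta})$ converges almost surely to a viable pair $(\bar X,\mathrm{P})\in\mathcal{V}_{(\lambda_i,\mathcal{L}^i)}$. Because $z\mapsto\|z\|^2$ is nonnegative and lower semicontinuous and $h$ is bounded and continuous, Fatou's lemma gives
\[
\liminf_{\epsilon\downarrow 0}\mathbb{E}\Bigl[\tfrac12\int_0^1\|u^\epsilon_t\|^2 dt+h(\bar X^\epsilon)\Bigr]\geq \mathbb{E}\Bigl[\tfrac12\int_{\mathcal{Z}\times\mathcal{Y}\times[0,1]}\|z\|^2\mathrm{P}(dzdydt)+h(\bar X)\Bigr]\geq\inf_\phi[S^i(\phi)+h(\phi)].
\]

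The Laplace upper bound (equivalently the LDP lower bound) is the main obstacle and must be handled regime by regime. Given $\eta>0$, fix a viable pair $(\phi,\mathrm{P})\in\mathcal{V}_{(\lambda_i,\mathcal{L}^i)}$ whose cost plus $h(\phi)$ is within $\eta$ of the infimum. The task is to build admissible controls $u^\epsilon\in\mathcal{A}$ for which $\bar X^\epsilon\Rightarrow\phi$ and $\tfrac12\mathbb{E}\int_0^1\|u^\epsilon_t\|^2 dt\to\tfrac12\int\|z\|^2\mathrm{P}(dzdydt)$. In Regime 1, the correctors $\chi$ from the cell problem \eqref{Eq:CellProblem} absorb the singular drift $(\epsilon/\delta)b$ via an It\^o expansion of $\epsilon\chi$, and the centering Condition \ref{A:Assumption2} combined with the Hamilton--Jacobi structure yields an explicit near-optimal feedback $u^\epsilon_t=v(t,\bar X^\epsilon_t)$ depending only on the slow variable, giving the strong form of the control promised in the introduction. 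In Regime 2, I would first mollify the conditional kernel $\mathrm{P}_t(dz|y)$ to a smooth map $z(t,y)$ and set $u^\epsilon_t=z(t,\bar X^\epsilon_t/\delta)$; the nondegeneracy of the fast noise (which survives because $\epsilon/\delta\to\gamma>0$) provides the ergodicity needed to reproduce the $y$-marginal of $\mathrm{P}_t$, with the crucial technical input being smoothness of the local rate function (Subsection \ref{S:BoundedOptimalControlRegime2}). Regime 3 is the hardest because the limit fast operator $\mathcal{L}^3$ is purely first-order and provides no intrinsic mixing on the torus; this is precisely why the additional hypotheses ($d=1$, or $c,\sigma$ independent of $x$) are imposed, under which the invariant measure on the fast torus can be written explicitly and a construction analogous to Regime 2 can be carried out.

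Compactness of the level sets follows by extracting limits of near-minimizing viable pairs. Take $\phi_n\in\Phi^i_s$ with viable pairs $(\phi_n,\mathrm{P}^n)$ of cost at most $s+1/n$: the uniform bound $\int\|z\|^2\mathrm{P}^n\leq s+1$ makes $\{\mathrm{P}^n\}$ tight on $\mathcal{Z}\times\mathcal{Y}\times[0,1]$ (with $\mathcal{Y}\times[0,1]$ already compact), and the at-most-linear growth in $z$ of $\lambda_i$ combined with the uniform $L^2$ bound yields equicontinuity of $\{\phi_n\}$ via Arzel\`a--Ascoli. Passing to a subsequential limit, the viability identities \eqref{Eq:AccumulationPointsProcessViable}--\eqref{Eq:AccumulationPointsFullMeasureViable} are preserved because $\mathcal{L}^i_{z,x}f(y)$ and $\lambda_i(x,y,z)$ are affine in $z$ with coefficients uniformly continuous on the compact $y$-domain, and lower semicontinuity of $\mathrm{P}\mapsto\int\|z\|^2\mathrm{P}$ gives $S^i(\phi)\leq s$, completing the argument.
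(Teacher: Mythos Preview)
Your overall architecture matches the paper's: lower bound via $\epsilon$-optimal controls, Theorem \ref{T:MainTheorem1} and Fatou; upper bound by constructing regime-specific near-optimal feedback controls; compactness via tightness, Arzel\`a--Ascoli and stability of viability under limits. However, several of the upper-bound sketches are inaccurate.

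In Regime 1 the near-optimal control is \emph{not} a feedback in the slow variable alone: the paper uses $\bar u^\epsilon(t)=\bar u(t,\bar X^\epsilon_t,\bar X^\epsilon_t/\delta)$ with $\bar u(t,x,y)=\sigma^T(x,y)(I+\partial_y\chi(x,y))^T q^{-1}(x)(\dot\psi_t-r(x))$, and the fast variable must appear so that the running cost $\tfrac12\|\bar u^\epsilon(t)\|^2$ homogenizes to the correct local rate $\tfrac12\int_{\mathcal Y}\|\bar u(t,\bar X_t,y)\|^2\mu(dy|\bar X_t)$; a slow-only feedback cannot do this. (The It\^o expansion you mention uses $\delta\chi$, not $\epsilon\chi$, and is employed for tightness rather than in the upper bound.) In Regime 2 the paper does not mollify the conditional kernel $\mathrm P_t(dz|y)$ into a map $z(t,y)$; instead it recasts the local rate $L_2^o(x,\beta)$ as the value of an ergodic control problem, solves the associated Bellman equation to produce a bounded, Lipschitz-in-$y$, continuous-in-$x$ optimal feedback $\bar u_\beta(x,y)$, and then sets $\bar u^\epsilon(t)=\bar u(t,\bar X^\epsilon_t,\bar X^\epsilon_t/\delta)$---again with both slow and fast arguments, and with the $x$-continuity of the control and of the associated invariant measure being the essential output of Subsection \ref{S:BoundedOptimalControlRegime2}. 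In Regime 3 the explicit invariant measure you invoke is available only for $d=1$; for the multidimensional $x$-independent case the paper does not repeat a Regime-2-style construction but instead appeals to external results of Gaitsgory and Borkar--Gaitsgory on limit occupational measure sets.
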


The proof of this theorem is given in the subsequent sections. In Section
\ref{S:LaplacePrincipleRegime1} we prove that the formulation given in
(\ref{Eq:GeneralRateFunction}) for the rate function takes an explicit form in
Regime $1$ which agrees with the formula provided in \cite{FS}. We also
construct a nearly optimal control that achieves the LDP lower bound (or
equivalently the Laplace principle upper bound) at the prelimit level, see
Theorem \ref{T:MainTheorem3}. In Sections \ref{S:LaplacePrincipleRegime2} and
\ref{S:LaplacePrincipleRegime3}, similar constructions are provided for
Regimes $2$ and $3$, respectively.

\begin{remark}
In the case of Regime $3$ we prove the Laplace principle lower bound
for the general d-dimensional $(x,y)$-dependent case. However, for
reasons that will be explained in Section
\ref{S:LaplacePrincipleRegime3}, we can prove the Laplace principle
upper bound for the general $(x,y)-$dependent case in dimension
$d=1$ and  under the assumption that $c$ and $\sigma$ are
independent of $x$ for the general multidimensional case. We
conjecture that the full Laplace principle holds without this
restriction, and note also that the rate function for Regime $3$ is
a limiting case of that of Regime $2$ obtained by setting
$\gamma=0$.
\end{remark}

\vspace{0.2cm}

\begin{remark}
\label{R:RegularityCondition} The regularity assumptions imposed in Condition
\ref{A:Assumption1} can be relaxed. Due to Condition \ref{A:Assumption1}, the
solution to the cell problem (\ref{Eq:CellProblem}) is twice differentiable,
which allows us to apply It\^{o}'s formula. Consider the case $b=b(y),\sigma
=\sigma(y)$ and assume that they are Lipschitz continuous. Then, standard
elliptic regularity theory (e.g., \cite{GilbargTrudinger}) shows that the
solution $\chi$ to equation (\ref{Eq:CellProblem}) is in $H^{2}(\mathcal{Y}%
)=W^{2,2}(\mathcal{Y})$. By Sobolev's embedding lemma it is also in
$C^{1}(\mathcal{Y})$. Then, using a standard approximation argument, one can
still prove Theorems \ref{T:MainTheorem1} and \ref{T:MainTheorem2} for Regime
$1$.
\end{remark}

\vspace{0.2cm}

We conclude this section with a remark on possible extensions of Theorem
\ref{T:MainTheorem2} to the case $\mathcal{Y}=\mathbb{R}^{d}$.

\begin{remark}
\label{R:TheWholeEuclideanSpace} In the case of Regime 1 and under some
additional assumptions, one can extend the results to $\mathcal{Y}%
=\mathbb{R}^{d}$. In particular, one needs to impose structural assumptions on
the coefficients $b$ and $\sigma$ such that an invariant measure corresponding
to the operator $\mathcal{L}_{x}^{1}$ exists. Also, note that for
$\mathcal{Y}=\mathbb{R}^{d}$ there are no boundary conditions associated with
the cell problem (\ref{Eq:CellProblem}). One looks for solutions that grow at
most polynomially in $y$, as $\left\Vert y\right\Vert \rightarrow\infty$. For
more details and specific statements on homogenization for fast oscillating
diffusion processes on the whole space, see \cite{PardouxVeretennikov1}. Using
these results and techniques similar to the ones developed in the current
paper, one can prove results that are analogous to Theorem
\ref{T:MainTheorem1} and Theorem \ref{T:MainTheorem2} for Regime $1$ and
$\mathcal{Y}=\mathbb{R}^{d}$.

The situation is a bit more complicated for Regimes $2$ and $3$. One of the
main reasons is that the operators $\mathcal{L}_{z,x}^{2}$ and $\mathcal{L}%
_{z,x}^{3}$ involve the control variable as well. However, using results on
the structure of solutions to ergodic type Bellman equations in $\mathbb{R}%
^{d}$ analogous to \cite{KaiseSheu, BensoussanFrehse} and techniques similar
to the ones developed in the current paper, it is seems possible that one can
prove a result that is analogous to Theorem \ref{T:MainTheorem2} for Regime
$2$ and $\mathcal{Y}=\mathbb{R}^{d}$. Ergodic type Bellman equations arise
naturally in the study of the local rate function in Subsection
\ref{S:BoundedOptimalControlRegime2}. Assuming special structure on the
dynamics, the authors in \cite{Kushner1} and \cite{VeretennikovSPA2000} have
looked at similar problems corresponding to Regime $2$ when $\mathcal{Y}%
=\mathbb{R}^{d}$, using other methods. Among other assumptions, the author in
\cite{Kushner1} assumes that the fast variable enters the equations of motion
in an affine fashion, whereas the author in \cite{VeretennikovSPA2000} assumes
that the diffusion coefficient of the fast motion is independent of the slow
motion. However, the arguments used in \cite{Kushner1, VeretennikovSPA2000} do
not seem to directly extend to the full nonlinear case.
\end{remark}

\section{Limiting behavior of the controlled process}

\label{S:Limit} In this section we prove Theorem \ref{T:MainTheorem1}. In
particular, in Subsection \ref{SS:Tightness} we prove tightness of the pair
$(\bar{X}^{\epsilon},\mathrm{P}^{\epsilon,\Delta})$ and in Subsection
\ref{SS:ConvergenceAllRegime} we prove that any accumulation point $(\bar
{X}^{i},\mathrm{P}^{i})$ of $(\bar{X}^{\epsilon},\mathrm{P}^{\epsilon,\Delta
})$ is a viable pair according to Definition \ref{Def:ViablePair} for Regimes
$i=1,2,3$. Note that the approach is the same for all three regimes.
Therefore, we present the proof in detail for Regime $1$ and for Regimes $2$
and $3$ only outline the differences.

\subsection{Tightness}

\label{SS:Tightness}

In this section we prove that the pair $(\bar{X}^{\epsilon},\mathrm{P}%
^{\epsilon,\Delta})$ is tight. The proof is independent of the regime under consideration.

\begin{proposition}
\label{P:tightness}Consider any family $\{u^{\epsilon},\epsilon>0\}$ of
controls in $\mathcal{A}$ satisfying
\begin{equation}
\sup_{\epsilon>0}\mathbb{E}\int_{0}^{1}\left\Vert u_{t}^{\epsilon}\right\Vert
^{2}dt<\infty\label{A:UniformlyAdmissibleControls}%
\end{equation}
and assume Condition \ref{A:Assumption1}. In addition, in Regime 1 assume
Condition \ref{A:Assumption2}. Then the following hold.

\begin{enumerate}
\item {The family $\{(\bar{X}^{\epsilon},\mathrm{P}^{\epsilon,\Delta
}),\epsilon>0\}$ is tight.}

\item {The family $\{\mathrm{P}^{\epsilon,\Delta},\epsilon>0\}$ is uniformly
integrable in the sense that
\[
\lim_{M\rightarrow\infty}\sup_{\epsilon>0}\mathbb{E}_{x_{0}}\left[
\int_{\{z\in\mathcal{Z}:\parallel z\parallel>M\}\times\mathcal{Y}\times
\lbrack0,1]}\left\Vert z\right\Vert \mathrm{P}^{\epsilon,\Delta}%
(dzdydt)\right]  =0.
\]
}
\end{enumerate}
\end{proposition}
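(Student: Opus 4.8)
The plan is to establish the two assertions by a combination of standard moment estimates for the controlled SDE \eqref{Eq:LDPandA2} and the observation that the occupation measures $\mathrm{P}^{\epsilon,\Delta}$ live in a product space whose $\mathcal{Y}$- and $[0,1]$-components are compact, so that tightness reduces to controlling only the $\mathcal{Z}$-marginal. First I would treat part (ii), the uniform integrability of the $\mathcal{Z}$-marginal, since part (i) follows from it. By \eqref{Def:OccupationMeasures2} and Fubini,
\begin{equation*}
\mathbb{E}_{x_{0}}\!\left[\int_{\mathcal{Z}\times\mathcal{Y}\times[0,1]}\!\|z\|\,\mathrm{P}^{\epsilon,\Delta}(dzdydt)\right]=\mathbb{E}_{x_{0}}\!\left[\int_{0}^{1}\!\frac{1}{\Delta}\int_{t}^{t+\Delta}\|u^{\epsilon}_{s}\|\,ds\,dt\right]\le \mathbb{E}_{x_{0}}\!\int_{0}^{1}\|u^{\epsilon}_{s}\|\,ds,
\end{equation*}
after interchanging the order of integration and using that $\frac1\Delta\int_t^{t+\Delta}\!ds\,dt$ integrates to at most $1$ in $t$ (with the convention $u^{\epsilon}_s=0$ for $s>1$); the same computation with $\|z\|1_{\{\|z\|>M\}}$ in place of $\|z\|$ gives the bound $\mathbb{E}_{x_{0}}\int_0^1\|u^{\epsilon}_s\|1_{\{\|u^{\epsilon}_s\|>M\}}\,ds$. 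Since $\sup_{\epsilon}\mathbb{E}\int_0^1\|u^{\epsilon}_s\|^2ds=:K<\infty$ by \eqref{A:UniformlyAdmissibleControls}, Chebyshev/Cauchy--Schwarz gives $\mathbb{E}_{x_{0}}\int_0^1\|u^{\epsilon}_s\|1_{\{\|u^{\epsilon}_s\|>M\}}\,ds\le K/M$ uniformly in $\epsilon$, which tends to $0$ as $M\to\infty$. This proves (ii).

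Next, for tightness of $\{\mathrm{P}^{\epsilon,\Delta}\}$ I would use the standard criterion that a family of random probability measures on $\mathcal{Z}\times\mathcal{Y}\times[0,1]$ is tight provided the family of mean measures is tight on that space (see \cite{DupuisEllis}); since $\mathcal{Y}\times[0,1]$ is compact, it suffices that the mean $\mathcal{Z}$-marginals $\mathbb{E}_{x_0}[\mathrm{P}^{\epsilon,\Delta}(\,\cdot\,\times\mathcal{Y}\times[0,1])]$ be tight on $\mathcal{Z}=\mathbb{R}^d$. The uniform-integrability bound just proved gives $\sup_\epsilon\mathbb{E}_{x_0}\int\|z\|\,\mathrm{P}^{\epsilon,\Delta}(dzdydt)\le K^{1/2}<\infty$, so by Markov's inequality the mean $\mathcal{Z}$-marginals are uniformly tight, hence $\{\mathrm{P}^{\epsilon,\Delta}\}$ is tight.

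For tightness of $\{\bar X^{\epsilon}\}$ in $\mathcal{C}([0,1];\mathbb{R}^d)$ I would apply a standard criterion (e.g.\ Aldous' criterion or the Kolmogorov-type moment/modulus bound). Writing \eqref{Eq:LDPandA2} in integrated form, the drift term $\frac{\epsilon}{\delta}b+c$ is uniformly bounded (here is the one place one invokes Condition \ref{A:Assumption2} and the cell-problem corrector $\chi$ in Regime 1: the formally $O(\epsilon/\delta)$ term $\frac{\epsilon}{\delta}b(\bar X^{\epsilon}_t,\bar X^{\epsilon}_t/\delta)$ is handled by It\^o's formula applied to $\delta\,\chi(\bar X^{\epsilon}_t,\bar X^{\epsilon}_t/\delta)$, using \eqref{Eq:CellProblem}, which converts it into bounded terms plus a martingale of small quadratic variation — this is the only regime-dependent point, and is the main technical obstacle), the control term $\sigma u^{\epsilon}$ is handled by Cauchy--Schwarz using \eqref{A:UniformlyAdmissibleControls} to get $\mathbb{E}\|\int_s^t\sigma u^{\epsilon}_r\,dr\|^2\le \|\sigma\|_\infty^2\,(t-s)\,\mathbb{E}\int_s^t\|u^{\epsilon}_r\|^2dr\le C(t-s)$, and the stochastic integral $\sqrt{\epsilon}\int\sigma\,dW$ satisfies $\mathbb{E}\|\sqrt{\epsilon}\int_s^t\sigma\,dW_r\|^2\le \epsilon\|\sigma\|_\infty^2(t-s)$ by It\^o's isometry. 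Combining these yields $\mathbb{E}[\,\|\bar X^{\epsilon}_t-\bar X^{\epsilon}_s\|^2\mid\mathfrak{F}_s\,]\le C(t-s)$ uniformly in $\epsilon$ (with an extra $(t-s)^2$ from the bounded drift and the corrector bookkeeping), which together with $\bar X^{\epsilon}_0=x_0$ gives tightness of $\{\bar X^{\epsilon}\}$. Finally, tightness of the pair $\{(\bar X^{\epsilon},\mathrm{P}^{\epsilon,\Delta})\}$ follows since each coordinate family is tight. I expect the corrector estimate in Regime 1 — showing that the $O(\epsilon/\delta)$ drift does not destroy the moment bounds — to be the only nonroutine step; in Regimes 2 and 3 the drift is genuinely bounded and that step is unnecessary.
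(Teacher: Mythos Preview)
Your proposal is correct and follows essentially the same approach as the paper: the corrector $\chi$ and It\^o's formula to tame the $\epsilon/\delta$ drift in Regime~1, the $L^2$ control bound \eqref{A:UniformlyAdmissibleControls} together with compactness of $\mathcal{Y}\times[0,1]$ for tightness of $\mathrm{P}^{\epsilon,\Delta}$, and the $1/M$ Chebyshev bound for uniform integrability. The only cosmetic differences are that the paper uses the second-moment tightness function $g(r)=\int\|z\|^2\,r(dzdydt)$ and Theorem~A.3.17 of \cite{DupuisEllis} rather than your mean-measure criterion, and it states the $\bar X^\epsilon$ tightness via the modulus of continuity in probability rather than a conditional moment bound (your conditional bound $\mathbb{E}[\,\cdot\mid\mathfrak F_s]\le C(t-s)$ is not literally correct for the control term, since $\mathbb{E}[\int_s^t\|u^\epsilon\|^2\mid\mathfrak F_s]$ is random, but the intended pathwise Cauchy--Schwarz plus Markov argument is the same as the paper's).
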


\begin{proof}
\noindent(i). Tightness of the family $\{\bar{X}^{\epsilon}\}$ is standard if
we take into account the assumptions on the coefficients and the fact that the
sequence of controls $\{u^{\epsilon},\epsilon>0\}$ in $\mathcal{A}$ satisfy
(\ref{A:UniformlyAdmissibleControls}). Some care is needed only for Regime
$1$, because of the presence of the unbounded drift term. Recall that
$\chi=(\chi_{1},\ldots,\chi_{d})$ is one periodic in every direction in $y$
and satisfies
\begin{equation}
\mathcal{L}_{x}^{1}\chi_{\ell}(x,y)=-b_{\ell}(x,y),\quad\int_{\mathcal{Y}}%
\chi_{\ell}(x,y)\mu(dy|x)=0,\hspace{0.1cm}\ell=1,...,d.\nonumber
\end{equation}
Applying It\^{o}'s formula to $\chi(x,x/\delta)=(\chi_{1}(x,x/\delta
),\ldots,\chi_{d}(x,x/\delta))$ with $x=\bar{X}_{t}^{\epsilon}$, we get
\begin{align}
\bar{X}_{t}^{\epsilon}  &  =x_{0}+\int_{0}^{t}\left(  I+\frac{\partial\chi
}{\partial y}\right)  \left(  \bar{X}_{s}^{\epsilon},\frac{\bar{X}%
_{s}^{\epsilon}}{\delta}\right)  \left[  c\left(  \bar{X}_{s}^{\epsilon}%
,\frac{\bar{X}_{s}^{\epsilon}}{\delta}\right)  +\sigma\left(  \bar{X}%
_{s}^{\epsilon},\frac{\bar{X}_{s}^{\epsilon}}{\delta}\right)  u_{s}^{\epsilon
}\right]  ds\nonumber\\
&  +\int_{0}^{t}\left[  \epsilon\frac{\partial\chi}{\partial x}b+\delta
\frac{\partial\chi}{\partial x}\left[  c+\sigma u_{s}^{\epsilon}\right]
+\epsilon\delta\frac{1}{2}\sigma\sigma^{T}:\frac{\partial^{2}\chi}{\partial
x^{2}}+\epsilon\frac{1}{2}\sigma\sigma^{T}:\frac{\partial^{2}\chi}{\partial
x\partial y}\right]  \left(  \bar{X}_{s}^{\epsilon},\frac{\bar{X}%
_{s}^{\epsilon}}{\delta}\right)  ds\label{Eq:LDPRegime1}\\
&  +\sqrt{\epsilon}\int_{0}^{t}\left[  \left(  I+\frac{\partial\chi}{\partial
y}\right)  \sigma+\delta\frac{\partial\chi}{\partial x}\sigma\right]  \left(
\bar{X}_{s}^{\epsilon},\frac{\bar{X}_{s}^{\epsilon}}{\delta}\right)
dW_{s}-\delta\left[  \chi\left(  \bar{X}_{t}^{\epsilon},\frac{\bar{X}%
_{t}^{\epsilon}}{\delta}\right)  -\chi\left(  x_{0},\frac{x_{0}}{\delta
}\right)  \right]  .\nonumber
\end{align}
From this representation, the boundedness of the coefficients and the second
derivatives of $\chi$ and assumption (\ref{A:UniformlyAdmissibleControls}), it
follows that for every $\eta>0$
\[
\lim_{\rho\downarrow0}\limsup_{\epsilon\downarrow0}\mathbb{P}_{x_{0}}\left[
\sup_{|t_{1}-t_{2}|<\rho,0\leq t_{1}<t_{2}\leq1}|\bar{X}_{t_{1}}^{\epsilon
}-\bar{X}_{t_{2}}^{\epsilon}|\geq\eta\right]  =0.
\]
This implies the tightness of $\{\bar{X}^{\epsilon}\}$.

It remains to prove tightness of the occupation measures $\{\mathrm{P}%
^{\epsilon,\Delta},\epsilon>0\}$. We claim that the function
\[
g(r)=\int_{\mathcal{Z}\times\mathcal{Y}\times\lbrack0,1]}\left\Vert
z\right\Vert ^{2}r(dzdydt),\hspace{0.2cm}r\in\mathcal{P}(\mathcal{Z}%
\times\mathcal{Y}\times\lbrack0,1])
\]
is a tightness function, i.e., it is bounded from below and its level sets
$R_{k}=\{r\in\mathcal{P}(\mathbb{R}^{d}\times\mathcal{Y}\times\lbrack
0,1]):g(r)\leq k\}$ are relatively compact for each $k<\infty$. To prove the
relative compactness, observe that Chebyshev's inequality implies
\[
\sup_{r\in R_{k}}r\left(  \{(z,y)\in\mathcal{Z}\times\mathcal{Y}:\left\Vert
z\right\Vert >M\}\times\lbrack0,1]\right)  \leq\sup_{r\in R_{k}}\frac
{g(r)}{M^{2}}\leq\frac{k}{M^{2}}.
\]
Hence, $R_{k}$ is tight and thus relatively compact as a subset of
$\mathcal{P}$.

Since $g$ is a tightness function, by Theorem A.3.17 of \cite{DupuisEllis}
tightness of $\{\mathrm{P}^{\epsilon,\Delta},\epsilon>0\}$ will follow if we
prove that
\[
\sup_{\epsilon\in(0,1]}\mathbb{E}_{x_{0}}\left[  g(\mathrm{P}^{\epsilon
,\Delta})\right]  <\infty.
\]
However, by (\ref{A:UniformlyAdmissibleControls})
\begin{align}
\sup_{\epsilon\in(0,1]}\mathbb{E}_{x_{0}}\left[  g(\mathrm{P}^{\epsilon
,\Delta})\right]   &  =\sup_{\epsilon\in(0,1]}\mathbb{E}_{x_{0}}\left[
\int_{\mathcal{Z}\times\mathcal{Y}\times\lbrack0,1]}\left\Vert z\right\Vert
^{2}\mathrm{P}^{\epsilon,\Delta}(dzdydt)\right] \nonumber\\
&  =\sup_{\epsilon\in(0,1]}\mathbb{E}_{x_{0}}\int_{0}^{1}\frac{1}{\Delta}%
\int_{t}^{t+\Delta}\left\Vert u_{s}^{\epsilon}\right\Vert ^{2}dsdt\nonumber\\
&  <\infty.\nonumber
\end{align}
(ii). This follows from the last display and
\[
\mathbb{E}_{x_{0}}\left[  \int_{\{z\in\mathcal{Z}:\parallel z\parallel
>M\}\times\mathcal{Y}\times\lbrack0,1]}\left\Vert z\right\Vert \mathrm{P}%
^{\epsilon,\Delta}(dzdydt)\right]  \leq\frac{1}{M}\mathbb{E}_{x_{0}}\left[
\int_{\mathcal{Z}\times\mathcal{Y}\times\lbrack0,1]}\left\Vert z\right\Vert
^{2}\mathrm{P}^{\epsilon,\Delta}(dzdydt)\right]  .
\]
This concludes the proof of the proposition.
\end{proof}

\subsection{Weak convergence analysis}

\label{SS:ConvergenceAllRegime} Before beginning the weak convergence analysis
we make an observation that is useful in the proofs for all three regimes.
Let
\begin{equation}
g(\epsilon)=%
\begin{cases}
\frac{\delta^{2}}{\epsilon} & \text{Regime 1,}\\
\epsilon & \text{Regime 2,}\\
\delta & \text{Regime 3,}%
\end{cases}
\label{Def:ThreePossibleRegimesFunction_g}%
\end{equation}
where we recall that $\delta=\delta(\epsilon)\downarrow0$ as $\epsilon
\downarrow0$. Then the particular relation between $\delta$ and $\epsilon$ in
each regime as given by (\ref{Def:ThreePossibleRegimes}) implies that
$g(\epsilon)\downarrow0$ as $\epsilon\downarrow0$. The process $\bar{Y}%
_{t}^{\epsilon}=\bar{X}_{t}^{\epsilon}/\delta$ satisfies
\begin{equation}
\bar{Y}_{t}^{\epsilon}=\frac{x_{0}}{\delta}+\int_{0}^{t}\left[  \frac
{\epsilon}{\delta^{2}}b\left(  \bar{X}_{s}^{\epsilon},\bar{Y}_{s}^{\epsilon
}\right)  +\frac{1}{\delta}c\left(  \bar{X}_{s}^{\epsilon},\bar{Y}%
_{s}^{\epsilon}\right)  +\frac{1}{\delta}\sigma\left(  \bar{X}_{s}^{\epsilon
},\bar{Y}_{s}^{\epsilon}\right)  u_{s}^{\epsilon}\right]  ds+\frac
{\sqrt{\epsilon}}{\delta}\int_{0}^{t}\sigma\left(  \bar{X}_{s}^{\epsilon}%
,\bar{Y}_{s}^{\epsilon}\right)  dW_{s}. \label{Eq:StretchedOutProcess}%
\end{equation}
Recall the operators $\mathcal{L}_{z,x}^{i}$ for $i=1,2,3$ as given in
Definition \ref{Def:ThreePossibleOperators}. Suppose that instead of
(\ref{Eq:StretchedOutProcess}) we consider the analogous equation with the
slow motion and control \textquotedblleft frozen,\textquotedblright\ i.e.,
with $\bar{X}_{s}^{\epsilon}$ replaced by $x$ and $u_{s}^{\epsilon}$ replaced
by $z$, and define $\mathcal{A}_{z,x}^{\epsilon}$ by
\begin{equation}
\mathcal{A}_{z,x}^{\epsilon}f(y)=\left[  \frac{\epsilon}{\delta^{2}%
}b(x,y)+\frac{1}{\delta}\left[  c(x,y)+\sigma(x,y)z\right]  \right]
\cdot\nabla_{y}f(y)+\frac{\epsilon}{\delta^{2}}\frac{1}{2}\sigma\sigma
^{T}(x,y):\nabla_{y}\nabla_{y}f(y) \label{Def_of_A_gen}%
\end{equation}
for suitable functions $f$. Then it is easy to check that $g(\epsilon
)\mathcal{A}_{z,x}^{\epsilon}$ converges to $\mathcal{L}_{z,x}^{i}$ under
Regime $i=1,3$ and to $\gamma\mathcal{L}_{z,x}^{2}$ under Regime $i=2$, as
$\epsilon\downarrow0$.

\subsubsection{Limiting behavior of the CSDE in Regime 1.}

\label{SS:ConvergenceRegime1} In this section we prove Theorem
\ref{T:MainTheorem1} for $i=1$. For notational convenience we drop the
subscript or superscript $1$ from $\lambda_{1},\bar{X}^{1}$ and $\mathrm{P}%
^{1}$.

\begin{lemma}
\label{L:LemmaForMainTheorem1} Let $T>0$ and $\tau>0$ be positive numbers such
that $T+\tau\leq1$. Consider a continuous function $g:\mathbb{R}^{d}%
\times\mathcal{Y}\times\mathcal{Z}\rightarrow\mathbb{R}$ that is bounded in
the first and the second argument and affine in the third argument. Assume
that $(\bar{X}^{\epsilon},\mathrm{P}^{\epsilon,\Delta})\rightarrow(\bar
{X},\mathrm{P})\hspace{0.1cm}$in distribution for some subsequence of
$\epsilon\downarrow0$, and that Conditions \ref{A:Assumption1} and
\ref{A:Assumption2} and (\ref{A:UniformlyAdmissibleControls}) hold. Then the
following limits are valid in distribution along this subsequence:%
\begin{equation}
\int_{\mathcal{Z}\times\mathcal{Y}\times\lbrack T,T+\tau]}g(\bar{X}%
_{t}^{\epsilon},y,z)\mathrm{P}^{\epsilon,\Delta}(dzdydt)\rightarrow
\int_{\mathcal{Z}\times\mathcal{Y}\times\lbrack T,T+\tau]}g(\bar{X}%
_{t},y,z)\mathrm{P}(dzdydt) \label{Eq:MartingaleProblemRegime1General1}%
\end{equation}
and
\begin{equation}
\int_{T}^{T+\tau}g\left(  \bar{X}_{t}^{\epsilon},\frac{\bar{X}_{t}^{\epsilon}%
}{\delta},u_{t}^{\epsilon}\right)  dt-\int_{\mathcal{Z}\times\mathcal{Y}%
\times\lbrack T,T+\tau]}g(\bar{X}_{t}^{\epsilon},y,z)\mathrm{P}^{\epsilon
,\Delta}(dzdydt)\rightarrow0. \label{Eq:MartingaleProblemRegime1General2}%
\end{equation}

\end{lemma}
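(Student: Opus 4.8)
The plan is to treat the two displayed limits separately. For \eqref{Eq:MartingaleProblemRegime1General1} the tools are the continuous mapping theorem together with a truncation that copes with the fact that $g$ is only affine -- hence unbounded -- in $z$. For \eqref{Eq:MartingaleProblemRegime1General2} the idea is to unwind the definition \eqref{Def:OccupationMeasures2} of $\mathrm{P}^{\epsilon,\Delta}$ via Fubini and then exploit the uniform modulus of continuity of $\bar{X}^{\epsilon}$ established in the proof of Proposition \ref{P:tightness}, together with the uniform $L^{2}$ bound \eqref{A:UniformlyAdmissibleControls} on the controls. Throughout I would invoke the Skorokhod representation theorem, so that $(\bar{X}^{\epsilon},\mathrm{P}^{\epsilon,\Delta})\to(\bar{X},\mathrm{P})$ almost surely on a common space; convergence of the two functionals in distribution will then follow from convergence in probability (for \eqref{Eq:MartingaleProblemRegime1General2} the target is a constant, so the two notions coincide).

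For \eqref{Eq:MartingaleProblemRegime1General1}, since $g$ is affine in $z$ write $g(x,y,z)=g_{0}(x,y)+\langle g_{1}(x,y),z\rangle$, with $g_{0}(x,y)=g(x,y,0)$ and $g_{1}^{(i)}(x,y)=g(x,y,e_{i})-g(x,y,0)$ both bounded and continuous. For $M>0$ let $\theta_{M}$ denote coordinatewise truncation at level $M$ and put $g^{M}(x,y,z)=g_{0}(x,y)+\langle g_{1}(x,y),\theta_{M}(z)\rangle$, a bounded continuous function. The functional $(\phi,r)\mapsto\int_{\mathcal{Z}\times\mathcal{Y}\times[T,T+\tau]}g^{M}(\phi_{t},y,z)\,r(dzdydt)$ is continuous at every $(\phi,r)$ with $\phi$ continuous and $r(\mathcal{Z}\times\mathcal{Y}\times\{T,T+\tau\})=0$; the time marginal of $\mathrm{P}^{\epsilon,\Delta}$ is, by construction, Lebesgue measure on $[0,1]$, hence so is that of $\mathrm{P}$, and the latter condition holds $\mathrm{P}$-a.s. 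Thus the truncated version of \eqref{Eq:MartingaleProblemRegime1General1} follows from the continuous mapping theorem. To remove the truncation, note $|g-g^{M}|\le\|g_{1}\|_{\infty}\,\|z\|\,1_{\{\|z\|>M\}}$; by Proposition \ref{P:tightness}(ii) the resulting error under $\mathrm{P}^{\epsilon,\Delta}$ tends to $0$ as $M\to\infty$, uniformly in $\epsilon$, while lower semicontinuity of $r\mapsto\int\|z\|^{2}r(dzdydt)$ and Fatou's lemma give $\mathbb{E}\int\|z\|^{2}\mathrm{P}(dzdydt)<\infty$, so the error under $\mathrm{P}$ is $O(1/M)$ in expectation. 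A standard three-term estimate upgrades the truncated convergence to \eqref{Eq:MartingaleProblemRegime1General1}.

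For \eqref{Eq:MartingaleProblemRegime1General2}, by the definition of $\mathrm{P}^{\epsilon,\Delta}$ and Fubini,
\[
\int_{\mathcal{Z}\times\mathcal{Y}\times[T,T+\tau]}g(\bar{X}_{t}^{\epsilon},y,z)\,\mathrm{P}^{\epsilon,\Delta}(dzdydt)=\int_{T}^{T+\tau}\frac{1}{\Delta}\int_{t}^{t+\Delta}g\Bigl(\bar{X}_{t}^{\epsilon},\tfrac{\bar{X}_{s}^{\epsilon}}{\delta},u_{s}^{\epsilon}\Bigr)\,ds\,dt,
\]
so the left-hand side of \eqref{Eq:MartingaleProblemRegime1General2} equals $\mathcal{E}^{\epsilon}+\mathcal{R}^{\epsilon}$, where
\[
\mathcal{E}^{\epsilon}=\int_{T}^{T+\tau}g\Bigl(\bar{X}_{t}^{\epsilon},\tfrac{\bar{X}_{t}^{\epsilon}}{\delta},u_{t}^{\epsilon}\Bigr)dt-\int_{T}^{T+\tau}\frac{1}{\Delta}\int_{t}^{t+\Delta}g\Bigl(\bar{X}_{s}^{\epsilon},\tfrac{\bar{X}_{s}^{\epsilon}}{\delta},u_{s}^{\epsilon}\Bigr)\,ds\,dt
\]
and
\[
\mathcal{R}^{\epsilon}=\int_{T}^{T+\tau}\frac{1}{\Delta}\int_{t}^{t+\Delta}\Bigl[g\Bigl(\bar{X}_{s}^{\epsilon},\tfrac{\bar{X}_{s}^{\epsilon}}{\delta},u_{s}^{\epsilon}\Bigr)-g\Bigl(\bar{X}_{t}^{\epsilon},\tfrac{\bar{X}_{s}^{\epsilon}}{\delta},u_{s}^{\epsilon}\Bigr)\Bigr]ds\,dt.
\]
Since $|g(x,y,z)|\le C(1+\|z\|)$, interchanging the order of integration shows $\mathcal{E}^{\epsilon}$ is an integral over time sets of total length $O(\Delta)$ near the endpoints $T$ and $T+\tau$, so Cauchy--Schwarz and \eqref{A:UniformlyAdmissibleControls} give $\mathbb{E}|\mathcal{E}^{\epsilon}|\le C\Delta+C\sqrt{\Delta}\to0$. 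For $\mathcal{R}^{\epsilon}$, fix $\eta>0$; on the event $G_{\epsilon}$ that $\bar{X}^{\epsilon}$ remains in a fixed large ball $K$ and has modulus of continuity at scale $\Delta(\epsilon)$ below a threshold $\rho_{0}=\rho_{0}(\eta,K)$ -- an event of probability tending to $1$, by the equicontinuity estimate in the proof of Proposition \ref{P:tightness}(i) and because $\Delta(\epsilon)\downarrow0$ -- uniform continuity of $g_{0},g_{1}$ on $K\times\mathcal{Y}$ yields $|g(\bar{X}_{s}^{\epsilon},y,z)-g(\bar{X}_{t}^{\epsilon},y,z)|\le\eta(1+\|z\|)$ for $|s-t|\le\Delta$, hence $|\mathcal{R}^{\epsilon}|\le\eta\tau+\eta\int_{0}^{1}\|u_{s}^{\epsilon}\|\,ds$ on $G_{\epsilon}$. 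On $G_{\epsilon}^{c}$ one has $|\mathcal{R}^{\epsilon}|\le C+C\int_{0}^{1}\|u_{s}^{\epsilon}\|\,ds$, a random variable bounded in $L^{2}$ uniformly in $\epsilon$ by \eqref{A:UniformlyAdmissibleControls} and therefore uniformly integrable, so $\mathbb{E}[|\mathcal{R}^{\epsilon}|1_{G_{\epsilon}^{c}}]\to0$. Letting $\epsilon\downarrow0$ and then $\eta\downarrow0$ gives $\mathcal{R}^{\epsilon}\to0$ in $L^{1}$, and \eqref{Eq:MartingaleProblemRegime1General2} follows.

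The step I expect to be the main obstacle is the control of $\mathcal{R}^{\epsilon}$: the time averaging inherent in $\mathrm{P}^{\epsilon,\Delta}$ has to be reconciled with the unboundedness of the controls, and the only available bound on the integrand, $|g|\le C(1+\|u^{\epsilon}\|)$, is too weak to handle errors in $L^{\infty}$. The resolution rests on two inputs: first, a \emph{quantitative} modulus-of-continuity estimate for $\bar{X}^{\epsilon}$ on short time intervals -- in Regime 1 this is precisely where Condition \ref{A:Assumption2} and the solvability of the cell problem \eqref{Eq:CellProblem} enter, since the It\^{o}/cell-function identity \eqref{Eq:LDPRegime1} trades the unbounded drift $\tfrac{\epsilon}{\delta}b$ for coefficients bounded uniformly in $\epsilon$; and second, the uniform integrability furnished by \eqref{A:UniformlyAdmissibleControls}, which allows every error term that grows linearly in $\|u^{\epsilon}\|$ to be sent to $0$.
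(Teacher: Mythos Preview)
Your proof is correct and follows essentially the same route as the paper's: for \eqref{Eq:MartingaleProblemRegime1General1} the paper invokes weak convergence plus the uniform integrability of Proposition~\ref{P:tightness}(ii) (citing \cite{BoueDupuisEllis} for the truncation details you supply explicitly), and for \eqref{Eq:MartingaleProblemRegime1General2} it uses exactly your three ingredients---the Fubini identity for $\mathrm{P}^{\epsilon,\Delta}$, the $O(\Delta)$ boundary estimate from interchanging the order of integration, and the modulus-of-continuity bound on $\bar{X}^{\epsilon}$ from tightness---which amount precisely to your decomposition into $\mathcal{E}^{\epsilon}$ and $\mathcal{R}^{\epsilon}$.
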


\begin{proof}
First note that (\ref{Eq:MartingaleProblemRegime1General1}) holds due to the
weak convergence, the fact that the last marginal of $\mathrm{P}%
^{\epsilon,\Delta}(dzdydt)$ is always Lebesgue measure and part (ii) of
Proposition \ref{P:tightness} (see \cite{BoueDupuisEllis}, page 137 for more details).

Next we show that (\ref{Eq:MartingaleProblemRegime1General2}) holds. This
follows from the following three observations.

\begin{enumerate}
\item {Change of the order of integration implies that if $\tilde
{h}(s):[0,\infty)\rightarrow\mathbb{R}$ is integrable on each bounded interval
then
\begin{equation}
\left\vert \int_{0}^{T}\frac{1}{\Delta}\int_{t}^{t+\Delta}\tilde
{h}(s)dsdt-\int_{0}^{T}\tilde{h}(s)ds\right\vert \leq\int_{0}^{\Delta}%
|\tilde{h}(s)|ds+\int_{T}^{T+\Delta}|\tilde{h}(s)|ds. \label{Eq:Remark1}%
\end{equation}
}

\item {The definition of the occupation measure $\mathrm{P}^{\epsilon,\Delta}$
gives }%
\begin{equation}
{\int_{\mathcal{Z}\times\mathcal{Y}\times\lbrack T,T+\tau]}g(\bar{X}%
_{t}^{\epsilon},y,z)\mathrm{P}^{\epsilon,\Delta}(dzdydt)=\int_{T}^{T+\tau
}\frac{1}{\Delta}\int_{t}^{t+\Delta}g\left(  \bar{X}_{t}^{\epsilon},\frac
{\bar{X}_{s}^{\epsilon}}{\delta},u_{s}^{\epsilon}\right)  ds.}
\label{Eq:Remark2}%
\end{equation}

\item {The tightness of $\{\bar{X}^{\epsilon}\}$ implies that for every
$\eta>0$}%
\[
\sup_{0\leq t\leq T}\mathbb{P}_{x_{0}}\left[  \sup_{0\leq t\leq t+\Delta\leq
T}|\bar{X}_{t+\Delta}^{\epsilon}-\bar{X}_{t}^{\epsilon}|>\eta\right]
\rightarrow0\text{ as }\epsilon\downarrow0.
\]

\end{enumerate}

The last display, together with the continuity of $g$ in the first variable,
the fact that the second variable takes values in a compact space, and part
(ii) of Proposition \ref{P:tightness}, imply that
\[
{\int_{T}^{T+\tau}\frac{1}{\Delta}\int_{t}^{t+\Delta}g\left(  \bar{X}%
_{s}^{\epsilon},\frac{\bar{X}_{s}^{\epsilon}}{\delta},u_{s}^{\epsilon}\right)
dsdt-\int_{T}^{T+\tau}\frac{1}{\Delta}\int_{t}^{t+\Delta}g\left(  \bar{X}%
_{t}^{\epsilon},\frac{\bar{X}_{s}^{\epsilon}}{\delta},u_{s}^{\epsilon}\right)
dsdt\rightarrow0\ }\text{in probability.}%
\]
Then (\ref{Eq:Remark2}), the last display, (\ref{Eq:Remark1}) and
(\ref{A:UniformlyAdmissibleControls}) show that
(\ref{Eq:MartingaleProblemRegime1General2}) holds.
\end{proof}

\noindent\textit{Proof of Theorem \ref{T:MainTheorem1} for $i=1$.} The
tightness proven in Proposition \ref{P:tightness} implies that for any
subsequence of $\epsilon>0$ there exists a convergent subsubsequence and
$(\bar{X},\mathrm{P})$ such that
\[
(\bar{X}^{\epsilon},\mathrm{P}^{\epsilon,\Delta})\rightarrow(\bar
{X},\mathrm{P})\hspace{0.1cm}\text{in distribution.}%
\]
We invoke the Skorokhod representation theorem (Theorem 1.8 in
\cite{EithierKurtz}) which allows us to assume that the aforementioned
convergence holds with probability $1$. The Skorokhod representation theorem
involves the introduction of another probability space, but this distinction
is ignored in the notation. Note that by Fatou's Lemma
\begin{equation}
\mathbb{E}_{x_{0}}\int_{\mathcal{Z}\times\mathcal{Y}\times\lbrack
0,1]}\left\Vert z\right\Vert ^{2}\mathrm{P}(dzdydt)<\infty
\label{A:Assumption2_1}%
\end{equation}
and so $\int_{\mathcal{Z}\times\mathcal{Y}\times\lbrack0,1]}\left\Vert
z\right\Vert ^{2}\mathrm{P}(dzdydt)<\infty$ \ w.p.1. Thus it remains to show
that $(\bar{X},\mathrm{P})$ satisfy (\ref{Eq:AccumulationPointsProcessViable}%
), (\ref{Eq:AccumulationPointsMeasureViable}) and
(\ref{Eq:AccumulationPointsFullMeasureViable}).

Our tool for proving (\ref{Eq:AccumulationPointsProcessViable}) will be the
characterization of solutions to SDE's via the martingale problem
\cite{EithierKurtz}. Let $f,\phi_{j}$ be smooth, real valued functions with
compact support. For a measure $r\in\mathcal{P}(\mathcal{Z}\times
\mathcal{Y}\times\lbrack0,1])$ and $t\in\lbrack0,1]$, define
\[
(r,\phi_{j})_{t}=\int_{\mathcal{Z}\times\mathcal{Y}\times\lbrack0,t]}\phi
_{j}(z,y,s)r(dzdyds).
\]
Let $T,t_{i},\tau\geq0,i\leq q$ be given such that $t_{i}\leq T\leq T+\tau
\leq1$ and let $\zeta$ be a real valued, bounded and continuous function with
compact support on $(\mathbb{R}^{d})^{q}\times\mathbb{R}^{pq}$. We recall
that
\[
\lambda(x,y,z)=\left(  I+\frac{\partial\chi}{\partial y}(x,y)\right)
(c(x,y)+\sigma(x,y)z).
\]

In order to prove (\ref{Eq:AccumulationPointsProcessViable}), it is sufficient
to prove for any fixed such collection $p,q,T,t_{i},\tau,\phi_{j},\zeta,f$
that, as $\epsilon\downarrow0$,
\begin{equation}
\mathbb{E}_{x_{0}}\left[  \zeta(\bar{X}_{t_{i}}^{\epsilon},(\mathrm{P}%
^{\epsilon,\Delta},\phi_{j})_{t_{i}},i\leq q,j\leq p)\left[  f(\bar{X}%
_{T+\tau}^{\epsilon})-f(\bar{X}_{T}^{\epsilon})-\int_{T}^{T+\tau}%
\bar{\mathcal{A}}_{t}^{\epsilon,\Delta}f(\bar{X}_{t}^{\epsilon})dt\right]
\right]  \rightarrow0 \label{Eq:MartingaleProblemRegime1_1}%
\end{equation}
and
\begin{equation}
\int_{T}^{T+\tau}\bar{\mathcal{A}}_{s}^{\epsilon,\Delta}f(\bar{X}%
_{s}^{\epsilon})ds-\int_{\mathcal{Z}\times\mathcal{Y}\times\lbrack T,T+\tau
]}\lambda(\bar{X}_{s},y,z)\nabla f(\bar{X}_{s})\mathrm{P}(dzdyds)\rightarrow0
\label{Eq:MartingaleProblemRegime1_2a}%
\end{equation}
in probability. Here $\bar{\mathcal{A}}_{s}^{\epsilon,\Delta}$ is defined by
\begin{equation}
\bar{\mathcal{A}}_{t}^{\epsilon,\Delta}f(x)=\int_{\mathcal{Z}\times
\mathcal{Y}}\lambda(x,y,z)\nabla f(x)\mathrm{P}_{t}^{\epsilon,\Delta}(dzdy)
\label{Eq:PrelimitOperator}%
\end{equation}
and
\[
\mathrm{P}_{t}^{\epsilon,\Delta}(dzdy)=\frac{1}{\Delta}\int_{t}^{t+\Delta
}1_{dz}(u_{s}^{\epsilon})1_{dy}\left(  \frac{\bar{X}_{s}^{\epsilon}}{\delta
}\mod 1\right)  ds.
\]
Since they show that $(\bar{X},\mathrm{P})$ solves the appropriate martingale
problem, relations (\ref{Eq:MartingaleProblemRegime1_1}) and
(\ref{Eq:MartingaleProblemRegime1_2a}) imply
(\ref{Eq:AccumulationPointsProcessViable}). So, let us prove now that
(\ref{Eq:MartingaleProblemRegime1_1}) and
(\ref{Eq:MartingaleProblemRegime1_2a}) hold.

First, for every real valued, continuous function $\phi$ with compact support
and $t\in\lbrack0,1]$
\[
(\mathrm{P}^{\epsilon,\Delta},\phi)_{t}\rightarrow(\mathrm{P},\phi)_{t}\text{
w.p.}1.
\]
This follows from the topology used and the fact that the last marginal of
$\mathrm{P}$ is Lebesgue measure w.p.1. Second, we recall the solution
$\chi(x,y)$ to the cell problem (\ref{Eq:CellProblem}) and consider the
function $\psi_{\ell}(x,y)=\chi_{\ell}(x,y)f_{x_{\ell}}(x)$ for $\ell
=1,\ldots,d$. Then $\psi_{\ell}(x,y)$ is one periodic in every direction in
$y$ and satisfies
\begin{equation}
\mathcal{L}_{x}^{1}\psi_{\ell}(x,y)=-b_{\ell}(x,y)f_{x_{\ell}}(x),\quad
\int_{\mathcal{Y}}\psi_{\ell}(x,y)\mu(dy|x)=0. \label{Eq:CellProblem2}%
\end{equation}
Let $\psi=\{\psi_{1},\ldots,\psi_{d}\}$. We apply It\^{o}'s formula to
$\psi(\bar{X}_{s}^{\epsilon},\bar{X}_{s}^{\epsilon}/\delta)$. Relation
(\ref{Eq:CellProblem2}) and the boundedness of $\chi(x,y)$ and its derivatives
(see (\ref{Eq:LDPRegime1})) imply that in order to show
(\ref{Eq:MartingaleProblemRegime1_1}), it is sufficient to show that
\begin{equation}
\int_{T}^{T+\tau}\left[  \bar{\mathcal{A}}_{s}^{\epsilon,\Delta}f(\bar{X}%
_{s}^{\epsilon})-\lambda\left(  \bar{X}_{s}^{\epsilon},\frac{\bar{X}%
_{s}^{\epsilon}}{\delta},u_{s}^{\epsilon}\right)  \nabla f(\bar{X}%
_{s}^{\epsilon})\right]  ds\rightarrow0,\text{ as }\epsilon\downarrow0.
\label{Eq:MartingaleProblemRegime1_2}%
\end{equation}
in probability (a number of other terms converge to zero and we do not write
them explicitly for notational convenience). However, we can apply Lemma
\ref{L:LemmaForMainTheorem1} to
\[
g(x,y,z)=\lambda(x,y,z)\cdot\nabla f(x)=\left(  \left(  I+\frac{\partial\chi
}{\partial y}(x,y)\right)  c(x,y)+\left(  I+\frac{\partial\chi}{\partial
y}(x,y)\right)  \sigma(x,y)z\right)  \cdot\nabla f(x),
\]
in which case (\ref{Eq:MartingaleProblemRegime1_2a}) follows from
(\ref{Eq:MartingaleProblemRegime1General1}), and also
(\ref{Eq:MartingaleProblemRegime1_2}) (and hence
(\ref{Eq:MartingaleProblemRegime1_1})) follows from
(\ref{Eq:MartingaleProblemRegime1General2}). The completes the proof of
(\ref{Eq:AccumulationPointsProcessViable}).

Next we prove that (\ref{Eq:AccumulationPointsMeasureViable}) holds. For this
purpose define $\bar{Y}^{\epsilon}=\bar{X}^{\epsilon}/\delta$. Let $f_{\ell
}:\mathcal{Y}\mapsto\mathbb{R}$, $\ell\in\mathbb{N}$ be smooth and dense in
$\mathcal{C}(\mathcal{Y})$. Observe that the quantity
\begin{equation}
M_{t}^{\epsilon}=f_{\ell}(\bar{Y}_{t}^{\epsilon})-f_{\ell}(x_{0}/\delta
)-\int_{0}^{t}\mathcal{A}_{u_{s}^{\epsilon},\bar{X}_{s}^{\epsilon}}^{\epsilon
}f_{\ell}(\bar{Y}_{s}^{\epsilon})ds=\frac{\sqrt{\epsilon}}{\delta}\int_{0}%
^{t}\nabla_{y}f_{\ell}(\bar{Y}_{s}^{\epsilon})\cdot\sigma\left(  \bar{X}%
_{s}^{\epsilon},\bar{Y}_{s}^{\epsilon}\right)  dW_{s}, \label{Def:Martingale}%
\end{equation}
where $\mathcal{A}_{z,x}^{\epsilon}$ is defined in (\ref{Def_of_A_gen}), is an
$\mathfrak{F}_{t}-$martingale. Moreover, for any $T>0$, we have from
(\ref{Eq:Remark1}) that
\begin{align*}
\int_{0}^{T}\mathcal{A}_{u_{t}^{\epsilon},\bar{X}_{t}^{\epsilon}}^{\epsilon
}f_{\ell}(\bar{Y}_{t}^{\epsilon})dt+e_{T}^{\epsilon}  &  =\int_{0}^{T}\frac
{1}{\Delta}\left[  \int_{t}^{t+\Delta}\mathcal{A}_{u_{s}^{\epsilon},\bar
{X}_{s}^{\epsilon}}^{\epsilon}f_{\ell}(\bar{Y}_{s}^{\epsilon})ds\right]  dt\\
&  =\frac{\epsilon}{\delta^{2}}\int_{0}^{T}\frac{1}{\Delta}\left[  \int
_{t}^{t+\Delta}\mathcal{L}_{\bar{X}_{s}^{\epsilon}}^{1}f_{\ell}(\bar{Y}%
_{s}^{\epsilon})ds\right]  dt\\
&  +\frac{1}{\delta}\int_{0}^{T}\frac{1}{\Delta}\left[  \int_{t}^{t+\Delta
}\left[  c(\bar{X}_{s}^{\epsilon},\bar{Y}_{s}^{\epsilon})+\sigma(\bar{X}%
_{s}^{\epsilon},\bar{Y}_{s}^{\epsilon})u_{s}^{\epsilon}\right]  \cdot
\nabla_{y}f_{\ell}(\bar{Y}_{s}^{\epsilon})ds\right]  dt,
\end{align*}
where
\[
\left\vert e_{T}^{\epsilon}\right\vert \leq\int_{0}^{\Delta}|\mathcal{A}%
_{u_{s}^{\epsilon},\bar{X}_{s}^{\epsilon}}^{\epsilon}f_{\ell}(\bar{Y}%
_{s}^{\epsilon})|ds+\int_{T}^{T+\Delta}|\mathcal{A}_{u_{s}^{\epsilon},\bar
{X}_{s}^{\epsilon}}^{\epsilon}f_{\ell}(\bar{Y}_{s}^{\epsilon})|ds.
\]
Recall now the definition $g(\epsilon)=\delta^{2}/\epsilon\rightarrow0$ from
(\ref{Def:ThreePossibleRegimesFunction_g}) and define the operator
\[
\mathcal{G}_{x,y,z}f_{\ell}(y)=\left[  c(x,y)+\sigma(x,y)z\right]  \cdot
\nabla_{y}f_{\ell}(y).
\]

Let $D\subset\lbrack0,1]$ be countable and dense, and consider any $T\in D$
and $\ell\in\mathbb{N}$. By (\ref{Def:Martingale})%
\begin{align}
\lefteqn{g(\epsilon)M_{T}^{\epsilon}-g(\epsilon)\left[  f_{\ell}(\bar{Y}%
_{T}^{\epsilon})-f_{\ell}(\bar{Y}_{0}^{\epsilon})\right]  +g(\epsilon
)e_{T}^{\epsilon}}\nonumber\\
&  =\frac{1}{\delta}\int_{0}^{T}\frac{g(\epsilon)}{\Delta}\left[  \int
_{t}^{t+\Delta}\mathcal{G}_{\bar{X}_{s}^{\epsilon},\bar{Y}_{s}^{\epsilon
},u_{s}^{\epsilon}}f_{\ell}(\bar{Y}_{s}^{\epsilon})ds\right]  dt+\frac
{\epsilon}{\delta^{2}}\int_{0}^{T}\frac{g(\epsilon)}{\Delta}\left[  \int
_{t}^{t+\Delta}\mathcal{L}_{\bar{X}_{s}^{\epsilon}}^{1}f_{\ell}(\bar{Y}%
_{s}^{\epsilon})ds\right]  dt\nonumber\\
&  =\frac{g(\epsilon)}{\delta}\left(  \int_{0}^{T}\frac{1}{\Delta}\int
_{t}^{t+\Delta}\left[  \mathcal{G}_{\bar{X}_{s}^{\epsilon},\bar{Y}%
_{s}^{\epsilon},u_{s}^{\epsilon}}f_{\ell}(\bar{Y}_{s}^{\epsilon}%
)-\mathcal{G}_{\bar{X}_{t}^{\epsilon},\bar{Y}_{s}^{\epsilon},u_{s}^{\epsilon}%
}f_{\ell}(\bar{Y}_{s}^{\epsilon})\right]  dsdt\right) \nonumber\\
&  \mbox{}+\frac{g(\epsilon)}{\delta}\left(  \int_{0}^{T}\frac{1}{\Delta
}\left[  \int_{t}^{t+\Delta}\mathcal{G}_{\bar{X}_{t}^{\epsilon},\bar{Y}%
_{s}^{\epsilon},u_{s}^{\epsilon}}f_{\ell}(\bar{Y}_{s}^{\epsilon})ds\right]
dt\right) \nonumber\\
&  \mbox{}+\frac{\epsilon g(\epsilon)}{\delta^{2}}\left(  \int_{0}^{T}\frac
{1}{\Delta}\left[  \int_{t}^{t+\Delta}\left[  \mathcal{L}_{\bar{X}%
_{s}^{\epsilon}}^{1}f_{\ell}(\bar{Y}_{s}^{\epsilon})-\mathcal{L}_{\bar{X}%
_{t}^{\epsilon}}^{1}f_{\ell}(\bar{Y}_{s}^{\epsilon})\right]  ds\right]
dt\right) \nonumber\\
&  \mbox{}+\frac{\epsilon g(\epsilon)}{\delta^{2}}\left(  \int_{0}^{T}\frac
{1}{\Delta}\left[  \int_{t}^{t+\Delta}\left[  \mathcal{L}_{\bar{X}%
_{t}^{\epsilon}}^{1}f_{\ell}(\bar{Y}_{s}^{\epsilon})\right]  ds\right]
dt\right) \nonumber\\
&  =\frac{\delta}{\epsilon}\left(  \int_{0}^{T}\frac{1}{\Delta}\left[
\int_{t}^{t+\Delta}\left[  \mathcal{G}_{\bar{X}_{s}^{\epsilon},\bar{Y}%
_{s}^{\epsilon},u_{s}^{\epsilon}}f_{\ell}(\bar{Y}_{s}^{\epsilon}%
)-\mathcal{G}_{\bar{X}_{t}^{\epsilon},\bar{Y}_{s}^{\epsilon},u_{s}^{\epsilon}%
}f_{\ell}(\bar{Y}_{s}^{\epsilon})\right]  ds\right]  dt\right) \nonumber\\
&  \mbox{}+\frac{\delta}{\epsilon}\left(  \int_{\mathcal{Z}\times
\mathcal{Y}\times\lbrack0,T]}\mathcal{G}_{\bar{X}_{t}^{\epsilon},y,z}f_{\ell
}(y)\mathrm{P}^{\epsilon,\Delta}(dzdydt)\right) \nonumber\\
&  \mbox{}+\int_{0}^{T}\frac{1}{\Delta}\left[  \int_{t}^{t+\Delta}\left[
\mathcal{L}_{\bar{X}_{s}^{\epsilon}}^{1}f_{\ell}(\bar{Y}_{s}^{\epsilon
})-\mathcal{L}_{\bar{X}_{t}^{\epsilon}}^{1}f_{\ell}(\bar{Y}_{s}^{\epsilon
})\right]  ds\right]  dt\nonumber\\
&  \mbox{}+\int_{\mathcal{Z}\times\mathcal{Y}\times\lbrack0,T]}\mathcal{L}%
_{\bar{X}_{t}^{\epsilon}}^{1}f_{\ell}(y)\mathrm{P}^{\epsilon,\Delta}(dzdydt).
\label{Eq:MartingaleProperty_1}%
\end{align}
First consider the left hand side of (\ref{Eq:MartingaleProperty_1}). Since
$f_{\ell}$ is bounded $g(\epsilon)\left[  f_{\ell}(\bar{Y}_{T}^{\epsilon
})-f_{\ell}(\bar{Y}_{0}^{\epsilon})\right]  $ converges to zero uniformly. We
claim that
\begin{equation}
g(\epsilon)M_{T}^{\epsilon}\rightarrow0\text{ in probability.}
\label{Eq:MartingaleProperty}%
\end{equation}
Indeed, since $\sigma$ is uniformly bounded $\mathbb{E}_{x_{0}}\left[
M_{T}^{\epsilon}\right]  ^{2}$ is bounded above by a constant times
$\epsilon/\delta^{2}=$ $1/g(\epsilon)$, and so (\ref{Eq:MartingaleProperty})
also follows from $g(\epsilon)\downarrow0$. Finally, we claim that
$g(\epsilon)e_{T}^{\epsilon}\rightarrow0$ in probability. Using Condition
\ref{A:Assumption1}, for some constants $C_{1}$ and $C_{2}$
\begin{align*}
g(\epsilon)\int_{0}^{\Delta}|\mathcal{A}_{u_{s}^{\epsilon},\bar{X}%
_{s}^{\epsilon}}^{\epsilon}f_{\ell}(\bar{Y}_{s}^{\epsilon})|ds  &  \leq
g(\epsilon)C_{1}\Delta\frac{\epsilon}{\delta^{2}}+g(\epsilon)C_{2}\frac
{1}{\delta}\int_{0}^{\Delta}(1+\left\Vert u_{s}^{\epsilon}\right\Vert )ds\\
&  \leq C_{1}\Delta+C_{2}\frac{\delta}{\epsilon}\left[  \frac{3}{2}%
\Delta+\frac{1}{2}\int_{0}^{1}\left\Vert u_{s}^{\epsilon}\right\Vert
^{2}ds\right]  ,
\end{align*}
and hence the left hand side tends to zero in probability by
(\ref{A:UniformlyAdmissibleControls}) and since $\Delta\downarrow
0,\delta/\epsilon\downarrow0$. The same estimate holds for the second term in
$g(\epsilon)e_{T}^{\epsilon}$, and so the claim follows.

Next consider the right hand side of (\ref{Eq:MartingaleProperty_1}). The
first and the third term in the right hand side of
(\ref{Eq:MartingaleProperty_1}) converge to zero in probability by the
tightness of $\bar{X}^{\epsilon}$, Condition \ref{A:Assumption1},
(\ref{A:UniformlyAdmissibleControls}) and $\delta/\epsilon\downarrow0$. The
second term on the right hand side of (\ref{Eq:MartingaleProperty_1})
converges to zero in probability by the uniform integrability of
$\mathrm{P}^{\epsilon,\Delta}$ and by the fact that $\delta/\epsilon
\downarrow0$. So, it remains to consider the fourth term. Passing to the limit
as $\epsilon\downarrow0$, the previous discussion implies that except on a set
$N_{\ell,T}$ of probability zero,%
\begin{equation}
0=\int_{0}^{T}\int_{\mathcal{Z}\times\mathcal{Y}}\mathcal{L}_{\bar{X}_{t}}%
^{1}f_{\ell}(y)\mathrm{P}(dzdydt)\text{.} \label{Eq:ALimit}%
\end{equation}
Let $N=\cup_{\ell\in\mathbb{N}}\cup_{T\in D}N_{\ell,T}$. Then except on the
set $N$ of probability zero, continuity in $T$ and denseness of $\left\{
f_{\ell},\ell\in\mathbb{N}\right\}  $ imply that (\ref{Eq:ALimit}) holds for
all $T\in\lbrack0,1]$ and all $f\in\mathcal{C}^{2}(\mathcal{Y})$.

It remains to prove that $\mathrm{P}(\mathcal{Z}\times\mathcal{Y}\times
\lbrack0,t])=t$ for every $t\in\lbrack0,1]$. Using the fact that the analogous
property holds at the prelimit level, $\mathrm{P}(\mathcal{Z}\times
\mathcal{Y}\times\left\{  t\right\}  )=$ $0$ and the continuity of
$t\rightarrow\mathrm{P}(\mathcal{Z}\times\mathcal{Y}\times\lbrack0,t])$ to
deal with null sets, this property also follows. \hfill$\square$

\subsubsection{Limiting behavior of the CSDE in Regimes 2 and 3.}

\label{SS:ConvergenceRegime2}

In this subsection we prove Theorem \ref{T:MainTheorem1} for $i=2$. The proof
for $i=3$ is similar and thus it is omitted.

\noindent\textit{Proof of Theorem \ref{T:MainTheorem1} for $i=2$.} The proof
follows the same steps as the proof of Theorem \ref{T:MainTheorem1} for $i=1$,
and hence only the differences are outlined. We have
\[
\lambda(x,y,z)=\gamma b(x,y)+c(x,y)+\sigma(x,y)z
\]
and the operator $\bar{\mathcal{A}}_{t}^{\epsilon,\Delta}$ is defined as in
(\ref{Eq:PrelimitOperator}), but with this particular function $\lambda$.

The proof of (\ref{Eq:AccumulationPointsProcessViable}) can be carried out
repeating the corresponding steps of the proof of Theorem \ref{T:MainTheorem1}
for $i=1$. A difference is that one skips the step of applying It\^{o}'s
formula to $\psi_{\ell}$ that satisfies (\ref{Eq:CellProblem2}), since in this
case we do not have an unbounded drift term.

It remains to discuss (\ref{Eq:AccumulationPointsMeasureViable}). Again,
define $\bar{Y}^{\epsilon}=\bar{X}^{\epsilon}/\delta$ and observe that for
$f_{\ell}:\mathcal{Y}\mapsto\mathbb{R}$, $\ell\in\mathbb{N}$ smooth and dense
in $\mathcal{C}(\mathcal{Y})$, $M_{t}^{\epsilon}$ defined by
(\ref{Def:Martingale}) is an $\mathfrak{F}_{t}-$martingale. For any $T>0$,
small $\Delta>0$ and recalling that in this case $g(\epsilon)=\epsilon$,
\begin{align*}
\lefteqn{g(\epsilon)M_{T}^{\epsilon}-g(\epsilon)\left[  f_{\ell}(\bar{Y}%
_{T}^{\epsilon})-f_{\ell}(\bar{Y}_{0}^{\epsilon})\right]  +g(\epsilon
)e_{T}^{\epsilon}}\\
&  \hspace{0.1cm}=\int_{0}^{T}\frac{1}{\Delta}\left[  \int_{t}^{t+\Delta
}\left(  \epsilon\mathcal{A}_{u_{s}^{\epsilon},\bar{X}_{s}^{\epsilon}%
}^{\epsilon}-\gamma\mathcal{L}_{u_{s}^{\epsilon},\bar{X}_{s}^{\epsilon}}%
^{2}\right)  f_{\ell}(\bar{Y}_{s}^{\epsilon})ds\right]  dt+\gamma\int_{0}%
^{T}\frac{1}{\Delta}\left[  \int_{t}^{t+\Delta}\mathcal{L}_{u_{s}^{\epsilon
},\bar{X}_{s}^{\epsilon}}^{2}f_{\ell}(\bar{Y}_{s}^{\epsilon})ds\right]  dt.
\end{align*}
Observing that the operator $\epsilon\mathcal{A}_{z,x}^{\epsilon}$ converges
to the operator $\gamma\mathcal{L}_{z,x}^{2}$, we can argue similarly to the
corresponding part of the proof of Theorem \ref{T:MainTheorem1} for $i=1$ and
conclude that
\[
\int_{0}^{T}\int_{\mathcal{Z}\times\mathcal{Y}}\mathcal{L}_{z,\bar{X}_{t}^{2}%
}^{2}f_{\ell}(y)\mathrm{P}^{2}(dzdydt)=0\text{ w.p.}1.
\]

\section{Laplace principle lower bound and compactness of level sets}

\label{S:LowerBoundLowerSemicontinuity} In this section we prove the Laplace
principle lower bound for Theorem \ref{T:MainTheorem2} and the compactness of
the level sets of the action functional.

\subsection{Laplace principle lower bound}

\label{S:LaplaceLowerBound} For each $\epsilon>0$, let $X^{\epsilon}$ be the
unique strong solution to (\ref{Eq:LDPandA1}). To prove the Laplace principle
lower bound we must show that for all bounded, continuous functions $h$
mapping $\mathcal{C}([0,1];\mathbb{R}^{d})$ into $\mathbb{R}$
\begin{equation*}
\liminf_{\epsilon\downarrow0}-\epsilon\ln\mathbb{E}_{x_{0}}\left[
\exp\left\{  -\frac{h(X^{\epsilon})}{\epsilon}\right\}  \right]  \geq
\inf_{(\phi,\mathrm{P})\in\mathcal{V}}\left[  \frac{1}{2}\int_{\mathcal{Z}%
\times\mathcal{Y}\times\lbrack0,1]}\left\Vert z\right\Vert ^{2}\mathrm{P}%
(dzdydt)+h(\phi)\right]  . \label{Eq:LaplacePrincipleLowerBound}%
\end{equation*}
Of course, it is sufficient to prove the lower limit
(\ref{Eq:LaplacePrincipleLowerBound}) along any subsequence such that
\[
-\epsilon\ln\mathbb{E}_{x_{0}}\left[  \exp\left\{  -\frac{h(X^{\epsilon}%
)}{\epsilon}\right\}  \right]
\]
converges. Such a subsequence exists since $|-\epsilon\ln\mathbb{E}_{x_{0}%
}\left[  \exp\left\{  -h(X^{\epsilon})/\epsilon\right\}  \right]
|\leq\left\Vert h\right\Vert _{\infty}$.

According to Theorem \ref{T:RepresentationTheorem}, there exists a family of
controls $\{u^{\epsilon},\epsilon>0\}$ in $\mathcal{A}$ such that for every
$\epsilon>0$
\begin{equation*}
-\epsilon\ln\mathbb{E}_{x_{0}}\left[  \exp\left\{  -\frac{h(X^{\epsilon}%
)}{\epsilon}\right\}  \right]  \geq\mathbb{E}_{x_{0}}\left[  \frac{1}{2}%
\int_{0}^{1}\left\Vert u_{t}^{\epsilon}\right\Vert ^{2}dt+h(\bar{X}^{\epsilon
})\right]  -\epsilon, \label{Eq:LaplacePrincipleLowerBound1}%
\end{equation*}
where the controlled process $\bar{X}^{\epsilon}$ is defined in
(\ref{Eq:LDPandA2}). Note that for each $\epsilon>0$ $\mathbb{E}_{x_{0}}%
\int_{0}^{1}\left\Vert u_{t}^{\epsilon}\right\Vert ^{2}dt\leq4\left\Vert
h\right\Vert _{\infty}+2\epsilon$, and hence if we use this family of controls
and the associated controlled process $\bar{X}^{\epsilon}$ to construct
occupation measures $\mathrm{P}^{\epsilon,\Delta}$ in
(\ref{Def:OccupationMeasures2}), then by Proposition \ref{P:tightness} the
family $\{\bar{X}^{\epsilon},\mathrm{P}^{\epsilon,\Delta},\epsilon>0\}$ is
tight. Thus given any subsequence of $\epsilon>0$ there is a further
subsubsequence for which
\[
(\bar{X}^{\epsilon},\mathrm{P}^{\epsilon,\Delta})\rightarrow(\bar
{X},\mathrm{P})\text{ in distribution}%
\]
with $(\bar{X},\mathrm{P})\in\mathcal{V}$. By Fatou's lemma
\begin{align*}
\liminf_{\epsilon\downarrow0}\left(  -\epsilon\ln\mathbb{E}_{x_{0}}\left[
\exp\left\{  -\frac{h(X^{\epsilon})}{\epsilon}\right\}  \right]  \right)   &
\geq\liminf_{\epsilon\downarrow0}\left(  \mathbb{E}_{x_{0}}\left[  \frac{1}%
{2}\int_{0}^{1}\left\Vert u_{t}^{\epsilon}\right\Vert ^{2}dt+h(\bar
{X}^{\epsilon})\right]  -\epsilon\right) \nonumber\\
&  \geq\liminf_{\epsilon\downarrow0}\left(  \mathbb{E}_{x_{0}}\left[  \frac
{1}{2}\int_{0}^{1}\frac{1}{\Delta}\int_{t}^{t+\Delta}\left\Vert u_{s}%
^{\epsilon}\right\Vert ^{2}dsdt+h(\bar{X}^{\epsilon})\right]  \right)
\nonumber\\
&  =\liminf_{\epsilon\downarrow0}\left(  \mathbb{E}_{x_{0}}\left[  \frac{1}%
{2}\int_{\mathcal{Z}\times\mathcal{Y}\times\lbrack0,1]}\left\Vert z\right\Vert
^{2}\mathrm{P}^{\epsilon,\Delta}(dzdydt)+h(\bar{X}^{\epsilon})\right]  \right)
\nonumber\\
&  \geq\mathbb{E}_{x_{0}}\left[  \frac{1}{2}\int_{\mathcal{Z}\times
\mathcal{Y}\times\lbrack0,1]}\left\Vert z\right\Vert ^{2}\mathrm{P}%
(dzdydt)+h(\bar{X})\right] \nonumber\\
&  \geq\inf_{(\phi,\mathrm{P})\in\mathcal{V}}\left\{  \frac{1}{2}%
\int_{\mathcal{Z}\times\mathcal{Y}\times\lbrack0,1]}\left\Vert z\right\Vert
^{2}\mathrm{P}(dzdydt)+h(\phi)\right\}  .
\label{Eq:LaplacePrincipleLowerBoundGeneral}%
\end{align*}
This concludes the proof of the Laplace principle lower bound.

\subsection{Compactness of level sets}

\label{LowerSemicontinuity}

Consider $S^{i}(\phi)$ as defined by (\ref{Eq:GeneralRateFunction}) and for
notational convenience omit the superscript $i$ since the proof is independent
of the regime under consideration. We want to prove that for each $s<\infty$,
the set
\[
\Phi_{s}=\{\phi\in\mathcal{C}([0,1];\mathbb{R}^{d}):S(\phi)\leq s\}
\]
is a compact subset of $\mathcal{C}([0,1];\mathbb{R}^{d})$. As usual with the
weak convergence approach, the proof is analogous to that of the Laplace
principle lower bound. In Lemma \ref{L:equicontinuity} we show precompactness
of $\Phi_{s}$ and in Lemma \ref{L:lowersemicontinuous} that it is closed.
Together they imply compactness of $\Phi_{s}$.

\begin{lemma}
\label{L:equicontinuity} Fix $K<\infty$ and consider any sequence $\{(\phi
^{n},\mathrm{P}^{n}),n>0\}$ such that for every $n>0$ $(\phi^{n}%
,\mathrm{P}^{n})$ is viable and
\[
\int_{\mathcal{Z}\times\mathcal{Y}\times\lbrack0,1]}\left\Vert z\right\Vert
^{2}\mathrm{P}^{n}(dzdydt)<K.
\]
Then $\{(\phi^{n},\mathrm{P}^{n}),n>0\}$ is precompact.
\end{lemma}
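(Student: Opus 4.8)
\textbf{Proof proposal for Lemma \ref{L:equicontinuity} (precompactness of $\Phi_s$).}
The plan is to establish precompactness of $\{(\phi^n,\mathrm{P}^n),n>0\}$ in the product space $\mathcal{C}([0,1];\mathbb{R}^d)\times\mathcal{P}(\mathcal{Z}\times\mathcal{Y}\times[0,1])$ by treating the two coordinates separately and then combining. For the measure coordinate, I would reuse exactly the argument from the proof of Proposition \ref{P:tightness}(i): the function $g(r)=\int \|z\|^2\, r(dzdydt)$ is a tightness function on $\mathcal{P}(\mathcal{Z}\times\mathcal{Y}\times[0,1])$, since by Chebyshev's inequality its level sets put uniformly small mass on $\{\|z\|>M\}\times\mathcal{Y}\times[0,1]$, and $\mathcal{Y}$ and $[0,1]$ are already compact. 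The uniform bound $\int\|z\|^2\,\mathrm{P}^n(dzdydt)<K$ then forces $\{\mathrm{P}^n\}$ to lie in a fixed relatively compact (indeed, weakly sequentially precompact) subset of $\mathcal{P}(\mathcal{Z}\times\mathcal{Y}\times[0,1])$.

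For the trajectory coordinate, I would use the representation \eqref{Eq:AccumulationPointsProcessViable}, which since $\mathrm{P}^n$ decomposes as $\mathrm{P}^n_t(dzdy)\,dt$ with $\mathrm{P}^n_t$ a probability kernel gives $\phi^n_t = x_0 + \int_0^t\!\int_{\mathcal{Z}\times\mathcal{Y}}\lambda(\phi^n_s,y,z)\,\mathrm{P}^n_s(dzdy)\,ds$. Because $\lambda(x,y,z)$ is, in each regime, bounded in $x$ and $y$ and affine in $z$ (here one uses boundedness of $b,c,\sigma$ and, in Regime 1, boundedness of $I+\partial\chi/\partial y$ from the cell problem), there are constants $C_1,C_2$ with $|\lambda(x,y,z)|\le C_1+C_2\|z\|$. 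Hence for $0\le t_1<t_2\le 1$,
\begin{equation*}
|\phi^n_{t_2}-\phi^n_{t_1}| \le \int_{t_1}^{t_2}\!\int_{\mathcal{Z}\times\mathcal{Y}}\bigl(C_1+C_2\|z\|\bigr)\mathrm{P}^n_s(dzdy)\,ds \le C_1|t_2-t_1| + C_2\int_{t_1}^{t_2}\!\int_{\mathcal{Z}\times\mathcal{Y}}\|z\|\,\mathrm{P}^n_s(dzdy)\,ds,
\end{equation*}
and by Cauchy--Schwarz the last integral is at most $\bigl(|t_2-t_1|\bigr)^{1/2}\bigl(\int\|z\|^2\,\mathrm{P}^n(dzdydt)\bigr)^{1/2}\le K^{1/2}|t_2-t_1|^{1/2}$. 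This gives a uniform modulus of continuity for $\{\phi^n\}$, and since $\phi^n_0=x_0$ is fixed, the Arzel\`a--Ascoli theorem yields precompactness of $\{\phi^n\}$ in $\mathcal{C}([0,1];\mathbb{R}^d)$.

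Combining the two, any subsequence of $\{(\phi^n,\mathrm{P}^n)\}$ has a further subsequence along which $\phi^n\to\phi$ uniformly and $\mathrm{P}^n\to\mathrm{P}$ weakly; this is precisely precompactness in the product topology, which is what the lemma asserts. The one point requiring a little care — and the step I expect to be the main (minor) obstacle — is making sure the Cauchy--Schwarz/uniform-integrability bound on $\int\|z\|\,\mathrm{P}^n$ is applied correctly so that the equicontinuity constant depends only on $K$ and not on $n$; this is the same uniform-integrability phenomenon exploited in Proposition \ref{P:tightness}(ii), and it is what prevents mass of $\|z\|$ from escaping to infinity along the sequence. Everything else is a direct application of Arzel\`a--Ascoli and Prokhorov, so precompactness follows. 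Closedness of $\Phi_s$ is handled separately in Lemma \ref{L:lowersemicontinuous}.
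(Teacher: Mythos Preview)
Your proposal is correct and follows essentially the same approach as the paper's proof: the paper bounds $|\phi_{t_2}-\phi_{t_1}|$ by $C_0[|t_2-t_1|+\sqrt{t_2-t_1}\sqrt{\int_{[t_1,t_2]}\|z\|^2\,\mathrm{P}}]$ using exactly the affine-in-$z$ structure of $\lambda$ and Cauchy--Schwarz you describe, and handles tightness of $\{\mathrm{P}^n\}$ by invoking the same tightness-function argument from Proposition~\ref{P:tightness}(i). Your version is simply more explicit about the Arzel\`a--Ascoli and Prokhorov steps.
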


\begin{proof}
For any $\mathrm{P}$ such that $(\phi,\mathrm{P})\in\mathcal{V}$
\begin{align}
|\phi_{t_{2}}-\phi_{t_{1}}|  &  =\left\vert \int_{\mathcal{Z}\times
\mathcal{Y}\times\lbrack t_{1},t_{2}]}\lambda(\phi_{s},y,z)\mathrm{P}%
(dzdyds)\right\vert \nonumber\\
&  \leq C_{0}\left[  |t_{2}-t_{1}|+\sqrt{(t_{2}-t_{1})}\sqrt{\int
_{\mathcal{Z}\times\mathcal{Y}\times\lbrack t_{1},t_{2}]}\left\Vert
z\right\Vert ^{2}\mathrm{P}(dzdydt)}\right]  .\nonumber
\end{align}
This implies the precompactness of $\{\phi^{n},n>0\}$. Precompactness of
$\{\mathrm{P}^{n},n>0\}$ follows from the compactness of $\mathcal{Y}%
\times\lbrack0,1]$ and that $\left\Vert z\right\Vert ^{2}$ is a tightness
function (similarly to Proposition \ref{P:tightness}, part (i)).
\end{proof}

Next, we prove that the limit of a viable pair is also viable.

\begin{lemma}
\label{L:ContinuityMap} Fix $K<\infty$ and consider any convergent sequence
$\{(\phi^{n},\mathrm{P}^{n}),n>0\}$ such that for every $n>0$ $(\phi
^{n},\mathrm{P}^{n})$ is viable and
\begin{equation}
\int_{\mathcal{Z}\times\mathcal{Y}\times\lbrack0,1]}\left\Vert z\right\Vert
^{2}\mathrm{P}^{n}(dzdydt)<K. \label{A:AdmissibleLimitingMeasures}%
\end{equation}
Then $(\phi,\mathrm{P})$ is a viable pair.
\end{lemma}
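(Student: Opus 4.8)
The plan is to show that the three defining conditions of a viable pair (equations \eqref{Eq:AccumulationPointsProcessViable}, \eqref{Eq:AccumulationPointsMeasureViable}, \eqref{Eq:AccumulationPointsFullMeasureViable} in Definition \ref{Def:ViablePair}) are preserved under the passage to the limit $n\to\infty$. First I would fix the topology: the $\phi^n$ converge in $\mathcal{C}([0,1];\mathbb{R}^d)$ and the $\mathrm{P}^n$ converge weakly in $\mathcal{P}(\mathcal{Z}\times\mathcal{Y}\times[0,1])$ (these are the convergences whose existence is guaranteed by Lemma \ref{L:equicontinuity}); denote the limits by $\phi$ and $\mathrm{P}$. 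The uniform bound \eqref{A:AdmissibleLimitingMeasures} together with Fatou's lemma immediately gives $\int \|z\|^2 \mathrm{P}(dzdydt) \le K < \infty$, so $\mathrm{P}$ is square integrable. Also $\|z\|^2$ being a tightness function and Chebyshev's inequality give uniform integrability of $\{\|z\|\}$ under $\{\mathrm{P}^n\}$, which is the key technical fact that lets me integrate the (linearly growing in $z$) function $\lambda$ against the weakly converging measures.

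Next I would handle the three conditions in turn. Condition \eqref{Eq:AccumulationPointsFullMeasureViable}, that the last marginal of $\mathrm{P}$ is Lebesgue measure, passes to the limit trivially: the map $\mathrm{P}\mapsto$ (last marginal) is continuous for weak convergence, and the set of measures with last marginal equal to Lebesgue measure on $[0,t]$ scaled appropriately is closed. For \eqref{Eq:AccumulationPointsMeasureViable}, fix $f\in\mathcal{D}(\mathcal{L})$ and $t\in[0,1]$; the integrand $(z,y,s)\mapsto 1_{[0,t]}(s)\mathcal{L}_{z,\psi^n_s}f(y)$ depends on $\phi^n$ through the coefficients, which are Lipschitz (Condition \ref{A:Assumption1}), so using $\phi^n\to\phi$ uniformly I can replace $\mathcal{L}_{z,\phi^n_s}$ by $\mathcal{L}_{z,\phi_s}$ up to an error that vanishes uniformly; then weak convergence of $\mathrm{P}^n$ plus the uniform integrability in $z$ (needed because $\mathcal{L}^i_{z,x}f$ is affine, hence linearly growing, in $z$) give $\int_{[0,t]}\int \mathcal{L}_{z,\phi_s}f(y)\mathrm{P}(dzdyds)=0$. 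For \eqref{Eq:AccumulationPointsProcessViable}, the same two ingredients — Lipschitz dependence of $\lambda(\cdot,y,z)$ on the slow variable and uniform integrability in $z$ to control the affine growth — let me pass to the limit in $\phi^n_t = x_0 + \int_{[0,t]}\lambda(\phi^n_s,y,z)\mathrm{P}^n(dzdyds)$ to obtain $\phi_t = x_0 + \int_{[0,t]}\lambda(\phi_s,y,z)\mathrm{P}(dzdyds)$; note that in Regime 1 one also needs that $\partial\chi/\partial y$ is bounded and continuous in $x$, which follows from Condition \ref{A:Assumption1} and the regularity of the cell-problem solution.

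The main obstacle is the joint limit in \eqref{Eq:AccumulationPointsProcessViable} and \eqref{Eq:AccumulationPointsMeasureViable}: one is integrating a function that is \emph{not} bounded and continuous in $(z,y,s)$ (it grows linearly in $z$) against a merely weakly convergent sequence of measures, while \emph{simultaneously} the function itself is changing with $n$ through its dependence on $\phi^n_s$. The standard device is to split the error into two pieces — a ``frozen-$\phi$'' piece handled by a truncation-in-$z$ argument combined with the uniform integrability bound (truncate $\|z\|\le M$, apply weak convergence to the bounded continuous truncated integrand, then let $M\to\infty$ using $\sup_n \int_{\|z\|>M}\|z\|\,\mathrm{P}^n\to 0$), and a ``moving-$\phi$'' piece bounded by $\sup_{s}|\phi^n_s-\phi_s|\cdot(\text{Lipschitz constant})\cdot(1+\int\|z\|\,\mathrm{P}^n)$, which vanishes by uniform convergence of $\phi^n$ and the uniform first-moment bound on $\{\mathrm{P}^n\}$. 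Once this is set up, the rest is routine. I would remark that this lemma, together with Lemma \ref{L:equicontinuity}, closes the argument that $\Phi_s$ is compact: precompactness comes from Lemma \ref{L:equicontinuity}, and closedness of the set of viable pairs with cost $\le s$ (hence of $\Phi_s$ as its projection, using lower semicontinuity of $\mathrm{P}\mapsto\int\|z\|^2\mathrm{P}$ under weak convergence) comes from the present lemma.
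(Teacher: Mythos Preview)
Your proposal is correct and follows essentially the same approach as the paper's proof: both rely on the observation that $\lambda$ and $\mathcal{L}_{z,x}$ are continuous in $(x,y)$ and affine in $z$, combine the uniform second-moment bound \eqref{A:AdmissibleLimitingMeasures} (giving uniform integrability in $z$) with $\phi^n\to\phi$ and $\mathrm{P}^n\Rightarrow\mathrm{P}$ to pass to the limit in the viability equations, and use Fatou for the square integrability of the limit. Your write-up is in fact more explicit than the paper's about the truncation-in-$z$ step and the ``moving-$\phi$'' versus ``frozen-$\phi$'' splitting, which the paper leaves implicit.
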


\begin{proof}
Since $(\phi^{n},\mathrm{P}^{n})$ is viable
\begin{equation}
\phi_{t}^{n}=x_{0}+\int_{\mathcal{Z}\times\mathcal{Y}\times\lbrack0,t]}%
\lambda(\phi_{s}^{n},y,z)\mathrm{P}^{n}(dzdyds) \label{Eq:ContinuityLemma1_1}%
\end{equation}
and
\begin{equation}
\int_{0}^{t}\int_{\mathcal{Z}\times\mathcal{Y}}\mathcal{L}_{z,\phi_{s}^{n}%
}f(y)\mathrm{P}^{n}(dzdyds)=0 \label{Eq:ContinuityLemma1_2}%
\end{equation}
for every $t\in\lbrack0,1]$ and for every $f\in\mathcal{C}^{2}(\mathcal{Y})$.
The function $\lambda(x,y,z)$ and the operator $\mathcal{L}_{z,x}$ are defined
in Definitions \ref{Def:ThreePossibleFunctions} and
\ref{Def:ThreePossibleOperators} respectively.

By Fatou's Lemma $\mathrm{P}$ has a finite second moment in $z$. Moreover,
observe that the function $\lambda(x,y,z)$ and the operator $\mathcal{L}%
_{z,x}$ are continuous in $x$ and $y$ and affine in $z$. Hence by assumption
(\ref{A:AdmissibleLimitingMeasures}) and the convergence $\mathrm{P}%
^{n}\rightarrow\mathrm{P}$ and $\phi^{n}\rightarrow\phi$, $(\phi,\mathrm{P})$
satisfy equation (\ref{Eq:ContinuityLemma1_1}) with $(\phi^{n},\mathrm{P}%
^{n})$ replaced by $(\phi,\mathrm{P})$.

Next we show that (\ref{Eq:ContinuityLemma1_2}) holds with $(\phi
^{n},\mathrm{P}^{n})$ replaced by $(\phi,\mathrm{P})$. Since
(\ref{A:AdmissibleLimitingMeasures}) holds and $\mathrm{P}(\mathcal{Z}%
\times\mathcal{Y}\times\left\{  t\right\}  )=0$, we can send $n \rightarrow
\infty$ in (\ref{Eq:ContinuityLemma1_2}) and obtain
\[
0=\int_{0}^{t}\int_{\mathcal{Z}\times\mathcal{Y}}\mathcal{L}_{z,\phi_{s}%
}f(y)\mathrm{P}(dzdyds).
\]
Finally, it follows from $\mathrm{P}^{n}(\mathcal{Z}\times\mathcal{Y}%
\times\lbrack0,t])=t$ and $\mathrm{P}(\mathcal{Z}\times\mathcal{Y}%
\times\left\{  t\right\}  )=$ $0$ that $\mathrm{P}(\mathcal{Z}\times
\mathcal{Y}\times\lbrack0,t])=t$ for all $t\in\lbrack0,1]$.
\end{proof}

\begin{lemma}
\label{L:lowersemicontinuous} The functional $S(\phi)$ is lower semicontinuous.
\end{lemma}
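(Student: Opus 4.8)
The plan is to derive lower semicontinuity of $S(\phi)$ directly from the two preceding lemmas, exactly as is standard in the weak convergence approach. Suppose $\phi^n \to \phi$ in $\mathcal{C}([0,1];\mathbb{R}^d)$ and set $s = \liminf_{n} S(\phi^n)$. If $s = \infty$ there is nothing to prove, so assume $s < \infty$ and pass to a subsequence (not relabeled) along which $S(\phi^n) \to s$ and $S(\phi^n) \le s+1 =: K$ for all $n$. For each $n$, since the infimum defining $S(\phi^n)$ in (\ref{Eq:GeneralRateFunction}) is over the nonempty set $\mathcal{V}_{(\lambda_i,\mathcal{L}^i)}$ with $\phi^n$ as first component, choose $\mathrm{P}^n$ with $(\phi^n,\mathrm{P}^n) \in \mathcal{V}$ and
\[
\frac{1}{2}\int_{\mathcal{Z}\times\mathcal{Y}\times[0,1]} \|z\|^2\, \mathrm{P}^n(dzdydt) \le S(\phi^n) + \frac{1}{n} \le K + 1.
\]

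Now apply Lemma \ref{L:equicontinuity} to the sequence $\{(\phi^n,\mathrm{P}^n)\}$: it is precompact, so along a further subsequence $(\phi^n,\mathrm{P}^n) \to (\phi,\tilde{\mathrm{P}})$ for some limit, where the first component must be $\phi$ since $\phi^n \to \phi$ was already assumed. By Lemma \ref{L:ContinuityMap}, the limit $(\phi,\tilde{\mathrm{P}})$ is itself a viable pair, i.e. $(\phi,\tilde{\mathrm{P}}) \in \mathcal{V}_{(\lambda_i,\mathcal{L}^i)}$. The final ingredient is lower semicontinuity of the cost functional $r \mapsto \int \|z\|^2\, r(dzdydt)$ under weak convergence of measures: since $\|z\|^2$ is nonnegative and lower semicontinuous (indeed continuous) on $\mathcal{Z}\times\mathcal{Y}\times[0,1]$, one has
\[
\frac{1}{2}\int_{\mathcal{Z}\times\mathcal{Y}\times[0,1]} \|z\|^2\, \tilde{\mathrm{P}}(dzdydt) \le \liminf_{n\to\infty} \frac{1}{2}\int_{\mathcal{Z}\times\mathcal{Y}\times[0,1]} \|z\|^2\, \mathrm{P}^n(dzdydt).
\]
This is a standard fact (e.g., via the portmanteau theorem applied to the increasing truncations $\|z\|^2 \wedge M$ and monotone convergence in $M$).

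Combining these, and using that $(\phi,\tilde{\mathrm{P}})$ is an admissible competitor in the infimum defining $S(\phi)$,
\[
S(\phi) \le \frac{1}{2}\int_{\mathcal{Z}\times\mathcal{Y}\times[0,1]} \|z\|^2\, \tilde{\mathrm{P}}(dzdydt) \le \liminf_{n\to\infty} \left[ S(\phi^n) + \frac{1}{n} \right] = s = \liminf_{n\to\infty} S(\phi^n),
\]
which is exactly lower semicontinuity. I expect the only real point requiring care is the bookkeeping of the nested subsequences (first to make $S(\phi^n)$ converge to the liminf, then to extract a weak limit of the $\mathrm{P}^n$), together with checking that the first marginal of the weak limit is indeed $\phi$ — but this is immediate because convergence of $(\phi^n,\mathrm{P}^n)$ in the product topology forces the first coordinate to converge to $\phi$. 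Everything else is a direct invocation of Lemmas \ref{L:equicontinuity} and \ref{L:ContinuityMap} plus the elementary lower semicontinuity of the quadratic cost, so there is no genuine obstacle here; the lemma is essentially a corollary of what precedes it.
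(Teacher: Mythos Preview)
Your proof is correct and follows essentially the same approach as the paper's own proof: both choose near-optimal measures $\mathrm{P}^n$ for $\phi^n$, invoke Lemma~\ref{L:equicontinuity} for precompactness and Lemma~\ref{L:ContinuityMap} for viability of the limit, and then use lower semicontinuity of the quadratic cost under weak convergence (the paper phrases this as ``by Fatou's Lemma'' rather than your portmanteau/truncation argument, but the content is the same).
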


\begin{proof}
Let us consider a sequence $\phi^{n}$ with limit $\phi$. We want to prove
\[
\liminf_{n\rightarrow\infty}S(\phi^{n})\geq S(\phi).
\]
It suffices to consider the case when $S(\phi^{n})$ has a finite limit, i.e.,
there exists a $M<\infty$ such that $\liminf_{n\rightarrow\infty}S(\phi
^{n})\leq M$.

We recall the definition
\[
S(\phi^{n})=\inf_{(\phi^{n},\mathrm{P}^{n})\in\mathcal{V}_{(\lambda
,\mathcal{L})}}\left[  \frac{1}{2}\int_{\mathcal{Z}\times\mathcal{Y}%
\times\lbrack0,1]}\left\Vert z\right\Vert ^{2}\mathrm{P}^{n}(dzdydt)\right]
.
\]
Hence we can find measures $\{\mathrm{P}^{n},n<\infty\}$ satisfying $(\phi
^{n},\mathrm{P}^{n})\in\mathcal{V}_{(\lambda,\mathcal{L})}$ and
\begin{equation*}
\sup_{n<\infty}\frac{1}{2}\int_{\mathcal{Z}\times\mathcal{Y}\times\lbrack
0,1]}\left\Vert z\right\Vert ^{2}\mathrm{P}^{n}(dzdyds)<M+1,
\label{ConditionForTightness}%
\end{equation*}
and such that
\[
S(\phi^{n})\geq\left[  \frac{1}{2}\int_{\mathcal{Z}\times\mathcal{Y}%
\times\lbrack0,1]}\left\Vert z\right\Vert ^{2}\mathrm{P}^{n}(dzdydt)-\frac
{1}{n}\right]  .
\]
It follows from Lemma \ref{L:equicontinuity} that we can consider a
subsequence along which $(\phi^{n},\mathrm{P}^{n})$ converges to a limit
$(\phi,\mathrm{P})$. By Lemma \ref{L:ContinuityMap} $(\phi,\mathrm{P})$ is
viable. Hence by Fatou's Lemma
\begin{align*}
\liminf_{n\rightarrow\infty}S(\phi^{n})  &  \geq\liminf_{n\rightarrow\infty
}\left[  \frac{1}{2}\int_{\mathcal{Z}\times\mathcal{Y}\times\lbrack
0,1]}\left\Vert z\right\Vert ^{2}\mathrm{P}^{n}(dzdydt)-\frac{1}{n}\right] \\
&  \geq\frac{1}{2}\int_{\mathcal{Z}\times\mathcal{Y}\times\lbrack
0,1]}\left\Vert z\right\Vert ^{2}\mathrm{P}(dzdydt)\\
&  \geq\inf_{(\phi,\mathrm{P})\in\mathcal{V}_{(\lambda,\mathcal{L})}}\left[
\frac{1}{2}\int_{\mathcal{Z}\times\mathcal{Y}\times\lbrack0,1]}\left\Vert
z\right\Vert ^{2}\mathrm{P}(dzdydt)\right] \\
&  =S(\phi),
\end{align*}
which concludes the proof of lower-semicontinuity of $S(\cdot)$.
\end{proof}

\section{Regime 1: Laplace principle upper bound and alternative
representation}

\label{S:LaplacePrincipleRegime1}

In this section we prove the Laplace principle upper bound for Regime 1. We
also prove in Theorem \ref{T:MainTheorem3} that the formula for the rate
function of Theorem \ref{T:MainTheorem2} takes an explicit form which
coincides with the form provided in \cite{FS}. For notational convenience we
drop the superscript $1$ from $\bar{X}^{1}$ and $\mathrm{P}^{1}$.

In each regime the same steps are taken. The rate function on path space
obtained in the proof of the large deviation upper bound is defined in terms a
viable pair through (\ref{Eq:GeneralRateFunction}). What differs between
regimes are the forms that $\lambda_{i}$ and $\mathcal{L}^{i}$ take. In each
case, we consider for the limit variational problem in the Laplace principle a
nearly optimal pair $(\phi,\mathrm{P})$. Using the notion of viability
appropriate to the particular regime, we examine the constraints that link
$\phi$ and $\mathrm{P}$. The last step is to construct, based on these
constraints, a control for the prelimit representation that will lead to
controls and controlled processes that will converge to the cost associated
with $\mathrm{P}$ and $\phi$, respectively. This construction is subtle in all
regimes, due to the multiscale aspect of the dynamics.

To begin the construction, first observe that one can write
(\ref{Eq:GeneralRateFunction}) in terms of a local rate function, i.e., in the
form%
\begin{equation*}
S^{1}(\phi)=\int_{0}^{1}L_{1}^{r}(\phi_{s},\dot{\phi}_{s})ds.
\label{Eq:GeneralRateFncRegime1}%
\end{equation*}
This follows from the definition of a viable pair by setting
\begin{equation}
L_{1}^{r}(x,\beta)=\inf_{\mathrm{P}\in\mathcal{A}_{x,\beta}^{1,r}}%
\int_{\mathcal{Z}\times\mathcal{Y}}\frac{1}{2}\left\Vert z\right\Vert
^{2}\mathrm{P}(dzdy), \label{Def:ErgodicControlConstraints1_1Relaxed}%
\end{equation}
where
\begin{align*}
\mathcal{A}_{x,\beta}^{1,r}  &  =\left\{  \mathrm{P}\in\mathcal{P}%
(\mathcal{Z}\times\mathcal{Y}):\int_{\mathcal{Z}\times\mathcal{Y}}%
\mathcal{L}_{x}^{1}f(y)\mathrm{P}(dzdy)=0\text{ for all }f\in C^{2}%
(\mathcal{Y})\right. \\
&  \left.  \hspace{2cm}\int_{\mathcal{Z}\times\mathcal{Y}}\left\Vert
z\right\Vert ^{2}\mathrm{P}(dzdy)<\infty\text{ and }\beta=\int_{\mathcal{Z}%
\times\mathcal{Y}}\lambda_{1}(x,y,z)\mathrm{P}(dzdy)\right\}  .
\end{align*}
Note that any measure $\mathrm{P}\in\mathcal{P}(\mathcal{Z}\times\mathcal{Y})$
can be decomposed in the form
\begin{equation}
\mathrm{P}(dzdy)=\eta(dz|y)\mu(dy), \label{Eq:DecompositionOfInvariantMeasure}%
\end{equation}
where $\mu$ is a probability measure on $\mathcal{Y}$ and $\eta$ is a
stochastic kernel on $\mathcal{Z}$ given $\mathcal{Y}$. We refer to this as a
\textquotedblleft relaxed\textquotedblright\ formulation because the control
is characterized as a distribution on $\mathcal{Z}$ (given $x$ and $y $)
rather then as an element of $\mathcal{Z}$. Inserting
(\ref{Eq:DecompositionOfInvariantMeasure}) into
(\ref{Eq:AccumulationPointsMeasureViable}) with $\mathcal{L}_{z,x}%
=\mathcal{L}_{x}^{1}$ from Definition \ref{Def:ThreePossibleOperators}, we get
that for every $f\in\mathcal{C}^{2}(\mathcal{Y})$
\begin{equation}
\int_{\mathcal{Y}}\mathcal{L}_{x}^{1}f(y)\mu(dy)=0.
\label{Eq:AccumulationPointsMeasureRegime1_2}%
\end{equation}
Here we have used the independence of $\mathcal{L}_{x}^{1}$ on the control
variable $z$ to eliminate $\eta$. The nondegeneracy of the diffusion matrix
$\sigma\sigma^{T}$ and (\ref{Eq:AccumulationPointsMeasureRegime1_2}) guarantee
that $\mu(dy)$ is actually the unique invariant measure corresponding to the
operator $\mathcal{L}_{x}^{1}$ with periodic boundary conditions. Naturally,
$\mu(dy)$ implicitly depends on $x$ and was identified in Condition
\ref{A:Assumption2} as $\mu(dy|x)$.

We note that because the cost is convex in $z$ and $\lambda_{1}$ is affine in
$z$, the relaxed control formulation as given in
(\ref{Def:ErgodicControlConstraints1_1Relaxed}) is equivalent to the following
ordinary control formulation of the local rate function:%
\begin{equation}
L_{1}^{o}(x,\beta)=\inf_{(v,\mu)\in\mathcal{A}_{x,\beta}^{1,o}}\frac{1}{2}%
\int_{\mathcal{Y}}\left\Vert v(y)\right\Vert ^{2}\mu(dy),
\label{Def:ErgodicControlConstraints1_1Ordinary}%
\end{equation}
where%
\begin{align*}
\mathcal{A}_{x,\beta}^{1,o}  &  =\left\{  v(\cdot):\mathcal{Y}\mapsto
\mathbb{R}^{d},\mu\in\mathcal{P}(\mathcal{Y})\hspace{0.1cm}:\hspace
{0.1cm}(v,\mu)\text{ satisfy }\int_{\mathcal{Y}}\mathcal{L}_{x}^{1}%
f(y)\mu(dy)=0\right. \\
&  \left.  \hspace{1.8cm}\text{ for all }f\in C^{2}(\mathcal{Y}),\int
_{\mathcal{Y}}\left\Vert v(y)\right\Vert ^{2}\mu(dy)<\infty\text{ and }%
\beta=\int_{\mathcal{Y}}\lambda_{1}(x,y,v(y))\mu(dy)\right\}  .
\end{align*}
The relaxed control formulation turns out to be more convenient when studying
convergence. The fact that $L_{1}^{r}(x,\beta)=L_{1}^{o}(x,\beta)$ follows
from Jensen's inequality and that $\lambda_{1}(x,y,z)$ is affine in $z$. To be
precise, since $(v,\mu)\in\mathcal{A}_{x,\beta}^{1,o}$ induces a
$\mathrm{P}\in\mathcal{A}_{x,\beta}^{1,r}$ via $\mathrm{P}(dzdy)=\delta
_{v(y)}(dz)\mu(dy)$, $L_{1}^{r}(x,\beta)\leq L_{1}^{o}(x,\beta)$. Given
$\mathrm{P}\in\mathcal{A}_{x,\beta}^{1,r}$ we can let $\mu$ be its
$y$-marginal, and then define $v(y)=\int_{\mathcal{Z}}z\eta(dz|y)$, where
$\eta(dz|y)$ is the conditional distribution, so that $(v,\mu)\in
\mathcal{A}_{x,\beta}^{1,o}$. By Jensen's inequality
\[
\int_{\mathcal{Z}\times\mathcal{Y}}\frac{1}{2}\left\Vert z\right\Vert
^{2}\mathrm{P}(dzdy)\geq\int_{\mathcal{Y}}\frac{1}{2}\left\Vert \int
_{\mathcal{Z}}z\eta(dz|y)\right\Vert ^{2}\mu(dy)=\frac{1}{2}\int_{\mathcal{Y}%
}\left\Vert v(y)\right\Vert ^{2}\mu(dy),
\]
and so $L_{1}^{r}(x,\beta)\geq L_{1}^{o}(x,\beta)$. The same will be true for
the analogous quantities in Regimes 2 and 3, though there we will also need to
use that the generator is affine in $z$.

An explicit expression for the local rate function
(\ref{Def:ErgodicControlConstraints1_1Ordinary}) will be given in Theorem
\ref{T:ExplicitSolutionRegime1}. It turns on the following technical lemma
which states a H\"{o}lder inequality for integrals of matrices. The proof of
the lemma is deferred to the end of this section.

\begin{lemma}
\label{HolderInequality} Let $\kappa\in\mathcal{L}^{2}(\mathcal{Y},M_{d\times
d}(\mathbb{R});\mu)$ and $u\in\mathcal{L}^{2}(\mathcal{Y},M_{d\times
1}(\mathbb{R});\mu)$ be matrix and vector valued functions, respectively.
Define
\[
\beta=\int_{\mathcal{Y}}\kappa(y)u(y)\mu(dy)\hspace{0.2cm}\text{ and }%
\hspace{0.2cm}q=\int_{\mathcal{Y}}\kappa(y)\kappa^{T}(y)\mu(dy),
\]
and assume that $q$ is positive definite. Then
\begin{equation*}
\beta^{T}q^{-1}\beta\leq\int_{\mathcal{Y}}\left\Vert u(y)\right\Vert ^{2}%
\mu(dy). \label{HolderGoal}%
\end{equation*}

\end{lemma}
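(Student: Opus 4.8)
The plan is to reduce the matrix inequality to a scalar Cauchy--Schwarz estimate by testing against an arbitrary vector $w\in\mathbb{R}^{d}$ and then making an optimal choice of $w$. First I would write, for any $w\in\mathbb{R}^{d}$,
\[
w^{T}\beta=\int_{\mathcal{Y}}w^{T}\kappa(y)u(y)\,\mu(dy)=\int_{\mathcal{Y}}\bigl(\kappa^{T}(y)w\bigr)^{T}u(y)\,\mu(dy),
\]
and apply the Cauchy--Schwarz inequality in $L^{2}(\mathcal{Y},\mu)$ (with the Euclidean inner product on $\mathbb{R}^{d}$ pointwise) to obtain
\[
\bigl|w^{T}\beta\bigr|\leq\left(\int_{\mathcal{Y}}\bigl\|\kappa^{T}(y)w\bigr\|^{2}\mu(dy)\right)^{1/2}\left(\int_{\mathcal{Y}}\|u(y)\|^{2}\mu(dy)\right)^{1/2}.
\]
This step uses only that $\kappa\in L^{2}(\mu)$ and $u\in L^{2}(\mu)$, so all integrals are finite.

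Next I would identify the first factor on the right with a quadratic form in $q$: since $\|\kappa^{T}(y)w\|^{2}=w^{T}\kappa(y)\kappa^{T}(y)w$, integrating gives
\[
\int_{\mathcal{Y}}\bigl\|\kappa^{T}(y)w\bigr\|^{2}\mu(dy)=w^{T}\!\left(\int_{\mathcal{Y}}\kappa(y)\kappa^{T}(y)\,\mu(dy)\right)\!w=w^{T}qw.
\]
Combining the two displays and squaring yields, for every $w\in\mathbb{R}^{d}$,
\[
\bigl(w^{T}\beta\bigr)^{2}\leq\bigl(w^{T}qw\bigr)\int_{\mathcal{Y}}\|u(y)\|^{2}\mu(dy).
\]

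Finally I would specialize to $w=q^{-1}\beta$, which is well defined because $q$ is assumed positive definite. Then $w^{T}\beta=\beta^{T}q^{-1}\beta$ and $w^{T}qw=\beta^{T}q^{-1}qq^{-1}\beta=\beta^{T}q^{-1}\beta$, so the previous inequality becomes
\[
\bigl(\beta^{T}q^{-1}\beta\bigr)^{2}\leq\bigl(\beta^{T}q^{-1}\beta\bigr)\int_{\mathcal{Y}}\|u(y)\|^{2}\mu(dy).
\]
If $\beta^{T}q^{-1}\beta=0$ the claimed bound is trivial since the right-hand side is nonnegative; otherwise dividing by $\beta^{T}q^{-1}\beta>0$ gives exactly $\beta^{T}q^{-1}\beta\leq\int_{\mathcal{Y}}\|u(y)\|^{2}\mu(dy)$, as desired. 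There is no serious obstacle here: the only point requiring a little care is the pointwise-in-$y$ application of Cauchy--Schwarz followed by integration (i.e., treating $\kappa^{T}(\cdot)w$ and $u(\cdot)$ as elements of $L^{2}(\mu;\mathbb{R}^{d})$), and the observation that the positive definiteness of $q$ legitimizes the choice $w=q^{-1}\beta$ that makes the estimate sharp.
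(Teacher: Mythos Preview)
Your proof is correct and follows essentially the same route as the paper: both apply Cauchy--Schwarz in $L^{2}(\mathcal{Y},\mu;\mathbb{R}^{d})$ to the pairing $\langle \kappa^{T}(\cdot)w,u(\cdot)\rangle$ with the test vector $w=q^{-1}\beta$. The paper wraps this in a square-root factorization $q^{-1}=W^{T}W$ and a normalization $\int\|u\|^{2}\mu=1$, while you go directly to $(w^{T}\beta)^{2}\leq(w^{T}qw)\int\|u\|^{2}\mu$ and specialize; the underlying argument is the same.
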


\begin{theorem}
\label{T:ExplicitSolutionRegime1} Under Conditions \ref{A:Assumption1} and
\ref{A:Assumption2}, the infimization problem
(\ref{Def:ErgodicControlConstraints1_1Relaxed}) and hence
(\ref{Def:ErgodicControlConstraints1_1Ordinary}) has the explicit solution%
\[
L_{1}^{o}(x,\beta)=\frac{1}{2}(\beta-r(x))^{T}q^{-1}(x)(\beta-r(x)),
\]
where

\begin{itemize}
\item {$r(x)=\int_{\mathcal{Y}}(I+\frac{\partial\chi}{\partial y}%
)(x,y)c(x,y)\mu(dy|x)$,}

\item {$q(x)=\int_{\mathcal{Y}}(I+\frac{\partial\chi}{\partial y}%
)(x,y)\sigma(x,y)\sigma^{T}(x,y)(I+\frac{\partial\chi}{\partial y}%
)^{T}(x,y)\mu(dy|x),$}
\end{itemize}

and where $\mu(dy|x)$ is the unique invariant measure corresponding to the
operator $\mathcal{L}_{x}^{1}$ and $\chi(x,y)$ is defined by
(\ref{Eq:CellProblem}). The control
\[
v(y)=\bar{u}_{\beta}(x,y)=\sigma^{T}(x,y)\left(  I+\frac{\partial\chi
}{\partial y}(x,y)\right)  ^{T}q^{-1}(x)(\beta-r(x))
\]
attains the infimum in (\ref{Def:ErgodicControlConstraints1_1Ordinary}).
\end{theorem}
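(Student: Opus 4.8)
The plan is to reduce the infimization \eqref{Def:ErgodicControlConstraints1_1Ordinary} to a linearly constrained quadratic minimization over the control alone, then obtain the lower bound from Lemma \ref{HolderInequality} and exhibit $\bar u_\beta$ as the exact minimizer. First I would observe that, as already noted in the discussion preceding \eqref{Eq:AccumulationPointsMeasureRegime1_2}, the constraint $\int_{\mathcal{Y}}\mathcal{L}_x^1 f(y)\,\mu(dy)=0$ for all $f\in C^2(\mathcal{Y})$ together with the uniform nondegeneracy of $\sigma\sigma^T$ (Condition \ref{A:Assumption1}) forces $\mu=\mu(dy|x)$, the unique invariant measure of $\mathcal{L}_x^1$. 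Hence the only freedom in $\mathcal{A}_{x,\beta}^{1,o}$ is the choice of $v\in\mathcal{L}^2(\mathcal{Y},\mathbb{R}^d;\mu(\cdot|x))$ subject to the single vector constraint
\[
\beta=\int_{\mathcal{Y}}\Bigl(I+\tfrac{\partial\chi}{\partial y}\Bigr)(x,y)\bigl(c(x,y)+\sigma(x,y)v(y)\bigr)\,\mu(dy|x)=r(x)+\int_{\mathcal{Y}}\kappa(y)v(y)\,\mu(dy|x),
\]
where I abbreviate $\kappa(y)\doteq\bigl(I+\tfrac{\partial\chi}{\partial y}\bigr)(x,y)\sigma(x,y)$, so that $q(x)=\int_{\mathcal{Y}}\kappa(y)\kappa^T(y)\,\mu(dy|x)$.

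Next I would record that $q(x)$ is positive definite, so that $q^{-1}(x)$ in the claimed formula makes sense. Since $\mathcal{L}_x^1$ is nondegenerate with smooth coefficients on the torus, $\mu(dy|x)$ has a strictly positive density. For $\xi\neq0$ one has $\xi^T q(x)\xi=\int_{\mathcal{Y}}\bigl\|\sigma^T(x,y)\bigl(I+\tfrac{\partial\chi}{\partial y}\bigr)^T(x,y)\xi\bigr\|^2\mu(dy|x)$, which by nondegeneracy dominates a positive constant times $\int_{\mathcal{Y}}\bigl\|\bigl(I+\tfrac{\partial\chi}{\partial y}\bigr)^T(x,y)\xi\bigr\|^2\mu(dy|x)$. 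But $\bigl(I+\tfrac{\partial\chi}{\partial y}\bigr)^T(x,y)\xi=\nabla_y\bigl(\xi\cdot y+\xi\cdot\chi(x,y)\bigr)$ cannot vanish $\mu(\cdot|x)$-a.e.\ (equivalently, everywhere, by positivity of the density), since $y\mapsto\xi\cdot\chi(x,y)$ is periodic while $y\mapsto\xi\cdot y$ is not for $\xi\neq0$. Hence $\xi^T q(x)\xi>0$.

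For the lower bound, applying Lemma \ref{HolderInequality} with $\mu=\mu(\cdot|x)$, this $\kappa$, and $u=v$ gives, for every admissible $v$,
\[
\bigl(\beta-r(x)\bigr)^T q^{-1}(x)\bigl(\beta-r(x)\bigr)\leq\int_{\mathcal{Y}}\|v(y)\|^2\,\mu(dy|x),
\]
so $L_1^o(x,\beta)\geq\tfrac12\bigl(\beta-r(x)\bigr)^T q^{-1}(x)\bigl(\beta-r(x)\bigr)$. For the matching upper bound and attainment, take $v(y)=\bar u_\beta(x,y)=\sigma^T(x,y)\bigl(I+\tfrac{\partial\chi}{\partial y}(x,y)\bigr)^T q^{-1}(x)(\beta-r(x))$, which is bounded and hence in $\mathcal{L}^2(\mu(\cdot|x))$. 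A direct computation using $\int_{\mathcal{Y}}\kappa(y)\kappa^T(y)\,\mu(dy|x)=q(x)$ yields $\int_{\mathcal{Y}}\kappa(y)\bar u_\beta(x,y)\,\mu(dy|x)=q(x)q^{-1}(x)(\beta-r(x))=\beta-r(x)$, so $(\bar u_\beta,\mu(\cdot|x))\in\mathcal{A}_{x,\beta}^{1,o}$ (in particular this admissible set is nonempty), and $\tfrac12\int_{\mathcal{Y}}\|\bar u_\beta(x,y)\|^2\,\mu(dy|x)=\tfrac12(\beta-r(x))^T q^{-1}(x)q(x)q^{-1}(x)(\beta-r(x))=\tfrac12(\beta-r(x))^T q^{-1}(x)(\beta-r(x))$. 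Since $L_1^r(x,\beta)=L_1^o(x,\beta)$ was established above, this proves the theorem. Apart from the bookkeeping linear algebra, the only genuinely non-routine inputs are Lemma \ref{HolderInequality} (whose proof is deferred) and the positive-definiteness of $q(x)$; I expect the latter to be the main point requiring care, and it relies precisely on the strict positivity of the density of $\mu(\cdot|x)$ and the non-periodicity of $y\mapsto\xi\cdot y$.
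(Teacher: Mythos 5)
Your argument is correct and follows essentially the same route as the paper: the constraint pins down $\mu=\mu(\cdot|x)$, Lemma \ref{HolderInequality} with $\kappa=(I+\partial\chi/\partial y)\sigma$ gives the lower bound, and plugging in $\bar u_\beta$ shows the bound is attained. The one genuine addition you make is the verification that $q(x)$ is positive definite (which Lemma \ref{HolderInequality} assumes and the paper leaves implicit), via the observation that $(I+\partial\chi/\partial y)^T\xi=\nabla_y(\xi\cdot y+\xi\cdot\chi)$ cannot vanish identically because $\xi\cdot\chi$ is periodic while $\xi\cdot y$ is not; this is a standard fact in periodic homogenization but it is a worthwhile point to spell out.
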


\begin{proof}
First observe that for any $v\in\mathcal{A}_{x,\beta}^{1,o}$
\[
\int_{\mathcal{Y}}\left\Vert v(y)\right\Vert ^{2}\mu(dy|x)\geq\left(
\beta-r(x)\right)  ^{T}q^{-1}(x)\left(  \beta-r(x)\right)  .
\]
This can be derived as follows. Any $v\in\mathcal{A}_{x,\beta}^{1,o}$
satisfies
\[
\beta=\int_{\mathcal{Y}}\lambda_{1}(x,y,v(y))\mu(dy|x)=r(x)+\int_{\mathcal{Y}%
}\left(  I+\frac{\partial\chi}{\partial y}\right)  \sigma(x,y)\left(
v(y)\right)  ^{T}\mu(dy|x).
\]
Then treating $x$ as a parameter and applying Lemma \ref{HolderInequality} to
the relation above with $\beta-r(x)$ in place of $\beta$, $\kappa(x,y)=(
I+\frac{\partial\chi}{\partial y}) \sigma(x,y)$ and $u(y)=v(y)$ we immediately
get the claim.

Next we observe that by choosing (with $x$ again treated as a parameter)
\[
v(y)=\bar{u}_{\beta}(x,y)=\sigma^{T}(x,y)\left(  I+\frac{\partial\chi
}{\partial y}(x,y)\right)  ^{T}q^{-1}(x)(\beta-r(x)),
\]
we have
\[
\int_{\mathcal{Y}}\left\Vert \bar{u}_{\beta}(x,y)\right\Vert ^{2}%
\mu(dy|x)=(\beta-r(x))^{T}q^{-1}(x)(\beta-r(x)).
\]
This completes the proof of the theorem.
\end{proof}

Now we have all the ingredients to prove the Laplace principle upper bound and
hence to complete the proof of the LDP for $X^{\epsilon}$ in Regime $1$.

\begin{proof}
[Proof of Laplace principle upper bound for Regime 1]For each $\epsilon>0$,
let $X^{\epsilon}$ be the unique strong solution to (\ref{Eq:LDPandA1}). To
prove the Laplace principle upper bound we must show that for all bounded,
continuous functions $h$ mapping $\mathcal{C}([0,1];\mathbb{R}^{d})$ into
$\mathbb{R}$
\begin{equation*}
\limsup_{\epsilon\downarrow0}-\epsilon\ln\mathbb{E}_{x_{0}}\left[
\exp\left\{  -\frac{h(X^{\epsilon})}{\epsilon}\right\}  \right]  \leq
\inf_{\phi\in\mathcal{C}([0,1];\mathbb{R}^{d})}\left[  S(\phi)+h(\phi)\right]
. \label{Eq:LaplacePrincipleUpperBoundRegime1}%
\end{equation*}
Let $\eta>0$ be given and consider $\psi\in\mathcal{C}([0,1];\mathbb{R}^{d})$
with $\psi_{0}=x_{0}$ such that
\begin{equation}
S(\psi)+h(\psi)\leq\inf_{\phi\in\mathcal{C}([0,1];\mathbb{R}^{d})}\left[
S(\phi)+h(\phi)\right]  +\eta<\infty.
\label{Eq:NearlyOptimalTrajectoryRegime1}%
\end{equation}
Since $h$ is bounded, this implies that $S(\psi)<\infty$, and thus $\psi$ is
absolutely continuous. Theorem \ref{T:ExplicitSolutionRegime1} shows that
$L_{1}^{o}(x,\beta)$ is continuous and finite at each $(x,\beta)\in
\mathbb{R}^{2d}$. By a standard mollification argument we can further assume
that $\dot{\psi}$ is piecewise continuous (see for example Subsection $6.5$ of
\cite{DupuisEllis}). Given this particular function $\psi$ define
\begin{equation*}
\bar{u}(t,x,y)=\sigma^{T}(x,y)\left(  I+\frac{\partial\chi}{\partial
y}(x,y)\right)  ^{T}q^{-1}(x)(\dot{\psi}_{t}-r(x)),
\label{Def:ParticularControlRegime1_1}%
\end{equation*}
where $\chi$ satisfies (\ref{Eq:CellProblem}). Clearly, $\bar{u}(t,x,y)$ is
periodic in $y$. Lastly, we define a control in (partial) feedback form by%
\begin{equation*}
\bar{u}^{\epsilon}(t)=\bar{u}\left(  t,X_{t}^{\epsilon},\frac{X_{t}^{\epsilon
}}{\delta}\right)  . \label{Def:ParticularControlRegime1_2}%
\end{equation*}
Then standard homogenization theory for locally periodic diffusions and the
fact that the invariant measure $\mu(\cdot|x)$ is continuous as a function of
$x$ (see for example Chapter $3$, Section $4.6$ of \cite{BLP}) imply the following:

\begin{enumerate}
\item {$\bar{X}^{\epsilon}\overset{\mathcal{D}}{\rightarrow}\bar{X}$, where
w.p.1
\begin{align*}
\bar{X}_{t}  &  =x_{0}+\int_{0}^{t}r(\bar{X}_{s})ds+\int_{0}^{t}\left[
\int_{\mathcal{Y}}\left(  I+\frac{\partial\chi}{\partial y}(\bar{X}%
_{s},y)\right)  \sigma(\bar{X}_{s},y)\bar{u}\left(  s,\bar{X}_{s},y\right)
\mu(dy|\bar{X}_{s})\right]  ds\\
&  =x_{0}+\int_{0}^{t}r(\bar{X}_{s})ds+\int_{0}^{t}\left[  \int_{\mathcal{Y}%
}\left(  I+\frac{\partial\chi}{\partial y}\right)  \sigma\sigma^{T}\left(
I+\frac{\partial\chi}{\partial y}\right)  ^{T}\mu(dy|\bar{X}_{s})\right]
q^{-1}(\bar{X}_{s})\left(  \dot{\psi}_{s}-r(\bar{X}_{s})\right)  ds\\
&  =x_{0}+\int_{0}^{t}r(\bar{X}_{s})ds+\int_{0}^{t}q(\bar{X}_{s})q^{-1}%
(\bar{X}_{s})\left(  \dot{\psi}_{s}-r(\bar{X}_{s})\right)  ds\\
&  =x_{0}+\int_{0}^{t}\dot{\psi}_{s}ds\\
&  =\psi_{t},
\end{align*}
}

\item {the cost satisfies
\begin{equation}
\mathbb{E}_{x_{0}}\left(  \frac{1}{2}\int_{0}^{1}\left\Vert \bar{u}%
_{s}^{\epsilon}\right\Vert ^{2}ds-\frac{1}{2}\int_{0}^{1}\int_{\mathcal{Y}%
}\left\Vert \bar{u}(s,\bar{X}_{s},y)\right\Vert ^{2}\mu(dy|\bar{X}%
_{s})ds\right)  ^{2}\rightarrow0,\text{ as }\epsilon\downarrow0.
\label{Eq:LimitingCostRegime1_1}%
\end{equation}
}
\end{enumerate}

Theorem \ref{T:ExplicitSolutionRegime1} then implies that
\begin{equation}
\mathbb{E}_{x_{0}}\int_{0}^{1}\int_{\mathcal{Y}}\left\Vert \bar{u}(s,\bar
{X}_{s},y)\right\Vert ^{2}\mu(dy|\bar{X}_{s})ds=\mathbb{E}_{x_{0}}S(\bar
{X})=S(\psi). \label{Eq:LimitingCostRegime1_1b}%
\end{equation}
Thus
\begin{align*}
\limsup_{\epsilon\downarrow0}-\epsilon\ln\mathbb{E}_{x_{0}}\left[
\exp\left\{  -\frac{h(X^{\epsilon})}{\epsilon}\right\}  \right]   &
=\limsup_{\epsilon\downarrow0}\inf_{u\in\mathcal{A}}\mathbb{E}_{x_{0}}\left[
\frac{1}{2}\int_{0}^{1}\left\Vert u_{t}\right\Vert ^{2}dt+h(\bar{X}^{\epsilon
})\right] \\
&  \leq\limsup_{\epsilon\downarrow0}\mathbb{E}_{x_{0}}\left[  \frac{1}{2}%
\int_{0}^{1}\left\Vert \bar{u}_{t}^{\epsilon}\right\Vert ^{2}dt+h(\bar
{X}^{\epsilon})\right] \\
&  \leq\mathbb{E}_{x_{0}}\left[  \frac{1}{2}\int_{0}^{1}\int_{\mathcal{Y}%
}\left\Vert \bar{u}(s,\bar{X}_{s},y)\right\Vert ^{2}\mu(dy|\bar{X}%
_{s})dyds+h(\bar{X})\right] \\
&  =\left[  S(\psi)+h(\psi)\right] \\
&  \leq\inf_{\phi\in\mathcal{C}([0,1];\mathbb{R}^{d})}\left[  S(\phi
)+h(\phi)\right]  +\eta.
\end{align*}
Line $1$ follows from the representation Theorem \ref{T:RepresentationTheorem}%
. Line $2$ follows from the choice of a particular control. Line $3$ follows
from (\ref{Eq:LimitingCostRegime1_1}) and the continuity of $h$. Line $4$
follows from (\ref{Eq:LimitingCostRegime1_1b}) and from the fact that $\bar
{X}_{t}=\psi_{t}$. Lastly, line $5$ follows from
(\ref{Eq:NearlyOptimalTrajectoryRegime1}). Since $\eta>0$ is arbitrary, the
upper bound is proved.
\end{proof}

In fact, the considerations above allow us to derive an explicit
representation formula for the rate function in Regime 1. We summarize the
results in the following theorem.

\begin{theorem}
\label{T:MainTheorem3} Let $\{X^{\epsilon},\epsilon>0\}$ be the unique strong
solution to (\ref{Eq:LDPandA1}) and consider Regime $1$. Under Conditions
\ref{A:Assumption1} and \ref{A:Assumption2}, $\{X^{\epsilon},\epsilon>0\}$
satisfies a large deviations principle with rate function
\begin{equation*}
S(\phi)=%
\begin{cases}
\frac{1}{2}\int_{0}^{1}(\dot{\phi}_{s}-r(\phi_{s}))^{T}q^{-1}(\phi_{s}%
)(\dot{\phi}_{s}-r(\phi_{s}))ds & \text{if }\phi\in\mathcal{C}%
([0,1];\mathbb{R}^{d})\text{ is absolutely continuous}\\
+\infty & \text{otherwise.}%
\end{cases}
\label{ActionFunctional1}%
\end{equation*}

\end{theorem}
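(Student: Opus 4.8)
The plan is to combine the general large deviations result of Theorem~\ref{T:MainTheorem2} for Regime~1 with the explicit evaluation of the local rate function in Theorem~\ref{T:ExplicitSolutionRegime1}. Since the Laplace principle lower bound (Section~\ref{S:LowerBoundLowerSemicontinuity}), the compactness of the level sets (Subsection~\ref{LowerSemicontinuity}), and the Laplace principle upper bound (the proof just completed above) together establish that $\{X^{\epsilon}\}$ satisfies the Laplace principle---and hence the large deviations principle---with rate function $S^{1}(\phi)=\inf_{(\phi,\mathrm{P})\in\mathcal{V}_{(\lambda_{1},\mathcal{L}^{1})}}\frac{1}{2}\int_{\mathcal{Z}\times\mathcal{Y}\times[0,1]}\|z\|^{2}\mathrm{P}(dzdydt)$, the only remaining task is to show that this $S^{1}$ coincides with the quadratic functional in the statement, and to note that the compactness of its level sets is precisely the compactness of $\Phi_{s}^{1}$ asserted in Theorem~\ref{T:MainTheorem2}.

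First I would record that $S^{1}(\phi)=+\infty$ whenever $\phi$ is not absolutely continuous: by Definition~\ref{Def:ViablePair} the first component of any viable pair is absolutely continuous, so $\mathcal{V}_{(\lambda_{1},\mathcal{L}^{1})}$ contains no pair with first component $\phi$, and the infimum over the empty set is $\infty$ by the convention in (\ref{Eq:GeneralRateFunction}). For absolutely continuous $\phi$ the plan is to prove the decomposition
\[
S^{1}(\phi)=\int_{0}^{1}L_{1}^{r}(\phi_{s},\dot{\phi}_{s})\,ds=\int_{0}^{1}L_{1}^{o}(\phi_{s},\dot{\phi}_{s})\,ds,
\]
where $L_{1}^{r}=L_{1}^{o}$ is the local rate function of (\ref{Def:ErgodicControlConstraints1_1Relaxed})--(\ref{Def:ErgodicControlConstraints1_1Ordinary}). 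For the inequality ``$\geq$'', given any $(\phi,\mathrm{P})\in\mathcal{V}_{(\lambda_{1},\mathcal{L}^{1})}$ I would disintegrate $\mathrm{P}(dzdydt)=\mathrm{P}_{t}(dzdy)\,dt$ using (\ref{Eq:AccumulationPointsFullMeasureViable}); the viability constraints (\ref{Eq:AccumulationPointsProcessViable}) and (\ref{Eq:AccumulationPointsMeasureViable}), together with the independence of $\mathcal{L}_{x}^{1}$ from $z$ (so that the analogue of (\ref{Eq:AccumulationPointsMeasureRegime1_2}) forces the $y$-marginal of $\mathrm{P}_{t}$ to be $\mu(\cdot|\phi_{t})$), show that for a.e.\ $t$ the kernel $\mathrm{P}_{t}$ lies in $\mathcal{A}_{\phi_{t},\dot{\phi}_{t}}^{1,r}$, hence $\frac{1}{2}\int_{\mathcal{Z}\times\mathcal{Y}}\|z\|^{2}\mathrm{P}_{t}(dzdy)\geq L_{1}^{r}(\phi_{t},\dot{\phi}_{t})$; integrating in $t$ gives the bound. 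For ``$\leq$'', I would use a measurable selection argument: the map $t\mapsto L_{1}^{r}(\phi_{t},\dot{\phi}_{t})$ is measurable (indeed, by Theorem~\ref{T:ExplicitSolutionRegime1} and the continuity of $r$, $q$ and $q^{-1}$---which rests on the continuity of $x\mapsto\mu(\cdot|x)$---the function $L_{1}^{o}$ is jointly continuous in $(x,\beta)$), and one selects, for each $t$, a kernel $\mathrm{P}_{t}\in\mathcal{A}_{\phi_{t},\dot{\phi}_{t}}^{1,r}$ that is within $\eta$ of the infimum and depends measurably on $t$; setting $\mathrm{P}(dzdydt)=\mathrm{P}_{t}(dzdy)\,dt$ then produces a viable pair $(\phi,\mathrm{P})$ whose cost is within $\eta$ of $\int_{0}^{1}L_{1}^{r}(\phi_{s},\dot{\phi}_{s})\,ds$.

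Finally, substituting the explicit formula $L_{1}^{o}(x,\beta)=\frac{1}{2}(\beta-r(x))^{T}q^{-1}(x)(\beta-r(x))$ from Theorem~\ref{T:ExplicitSolutionRegime1} into the decomposition yields exactly the claimed expression for $S(\phi)$ on absolutely continuous paths, with $S(\phi)=+\infty$ otherwise, and the large deviations principle with this rate function follows from Theorem~\ref{T:MainTheorem2}.

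I expect the main obstacle to be the measurable selection step in the ``$\leq$'' direction: one must verify that the set-valued map $t\mapsto\mathcal{A}_{\phi_{t},\dot{\phi}_{t}}^{1,r}$ (or the family of its $\eta$-near-minimizers) has a measurable graph with nonempty, suitably closed values so that a measurable selector exists, and then check that the glued measure $\mathrm{P}_{t}(dzdy)\,dt$ genuinely satisfies all three conditions (\ref{Eq:AccumulationPointsProcessViable})--(\ref{Eq:AccumulationPointsFullMeasureViable}). This is standard within the weak-convergence framework (cf.\ the treatment of local rate functions in \cite{DupuisEllis}), but it is the one place requiring care; everything else is bookkeeping built on Theorems~\ref{T:MainTheorem2} and \ref{T:ExplicitSolutionRegime1}.
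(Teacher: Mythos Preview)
Your proposal is correct and follows the paper's approach: the paper also treats Theorem~\ref{T:MainTheorem3} as an immediate consequence of Theorem~\ref{T:MainTheorem2} together with the local--rate decomposition $S^{1}(\phi)=\int_{0}^{1}L_{1}^{r}(\phi_{s},\dot\phi_{s})\,ds$ and the explicit formula of Theorem~\ref{T:ExplicitSolutionRegime1}.

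One simplification removes the obstacle you flag. For the ``$\leq$'' direction you invoke an abstract measurable selection of $\eta$-near-minimizers, but Theorem~\ref{T:ExplicitSolutionRegime1} already hands you an \emph{explicit} minimizer: for a.e.\ $t$ set
\[
\mathrm{P}_{t}(dzdy)=\delta_{\bar u_{\dot\phi_{t}}(\phi_{t},y)}(dz)\,\mu(dy|\phi_{t}),
\qquad
\bar u_{\beta}(x,y)=\sigma^{T}(x,y)\Bigl(I+\tfrac{\partial\chi}{\partial y}(x,y)\Bigr)^{T}q^{-1}(x)(\beta-r(x)).
\]
Measurability in $t$ is automatic from the explicit formula and the measurability of $t\mapsto(\phi_{t},\dot\phi_{t})$; the $y$-marginal is $\mu(\cdot|\phi_{t})$, so (\ref{Eq:AccumulationPointsMeasureViable}) holds; the constraint $\int\lambda_{1}\,\mathrm{P}_{t}=\dot\phi_{t}$ is built into Theorem~\ref{T:ExplicitSolutionRegime1}; and the cost equals $L_{1}^{o}(\phi_{t},\dot\phi_{t})$ exactly, not just within $\eta$. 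This is effectively what the paper does implicitly via the feedback control in the upper-bound proof, and it makes the selection machinery unnecessary.
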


We conclude with the proof of Lemma \ref{HolderInequality}.

\begin{proof}
[Proof of Lemma \ref{HolderInequality}]
Since $q$ is positive definite and symmetric one can write%
\[
q^{-1}=W^{T}W,
\]
where $W$ is an invertible matrix. It follows that
\[
\beta^{T}q^{-1}\beta=\left\Vert W\beta\right\Vert ^{2}.
\]
Without loss of generality we can assume
\[
\int_{\mathcal{Y}}\left\Vert u(y)\right\Vert ^{2}\mu(dy)=1.
\]
By the Cauchy-Schwartz inequality in $\mathbb{R}^{d}$ we have
\begin{align*}
\left\Vert W\beta\right\Vert ^{2}  &  =\left\langle W\beta,W\int_{\mathcal{Y}%
}\kappa(y)u(y)\mu(dy)\right\rangle \\
&  =\int_{\mathcal{Y}}\left\langle u(y),\kappa^{T}(y)W^{T}W\beta\right\rangle
\mu(dy)\\
&  \leq\sqrt{\int_{\mathcal{Y}}\left\Vert u\right\Vert ^{2}\mu(dy)}\sqrt
{\int_{\mathcal{Y}}\left\Vert \kappa^{T}(y)W^{T}W\beta\right\Vert ^{2}\mu
(dy)}\\
&  =\sqrt{\int_{\mathcal{Y}}\left\Vert \kappa^{T}(y)W^{T}W\beta\right\Vert
^{2}\mu(dy)}\\
&  =\sqrt{\beta^{T}W^{T}W\left[  \int_{\mathcal{Y}}\kappa(y)\kappa^{T}%
(y)\mu(dy)\right]  W^{T}W\beta}\\
&  =\sqrt{\beta^{T}W^{T}W\beta}\\
&  =\left\Vert W\beta\right\Vert .
\end{align*}
If $\left\Vert W\beta\right\Vert =0$, then the result holds automatically. If
$\left\Vert W\beta\right\Vert \neq0$ then we get $\left\Vert W\beta\right\Vert
\leq1$, which proves the result.
\end{proof}

\subsection{Example}

\label{SS:Examples} In this subsection we consider an example. A particular
model of interest is the first order Langevin equation
\begin{equation}
dX_{t}^{\epsilon}=\left[  -\frac{\epsilon}{\delta}\nabla Q\left(  \frac
{X_{t}^{\epsilon}}{\delta}\right)  -\nabla V\left(  X_{t}^{\epsilon}\right)
\right]  dt+\sqrt{\epsilon}\sqrt{2D}dW_{t},\hspace{0.2cm}X_{0}^{\epsilon
}=x_{0}, \label{Eq:LangevinEquation2}%
\end{equation}
where $2D$ is a diffusion constant and the two-scale potential is composed by
a large-scale part, $V(x)$, and a fluctuating part, $\epsilon Q(x/\delta) $.
An example of such a potential is given in Figure 1.

\begin{figure}[th]
\label{F:Figure1}
\par
\begin{center}
\includegraphics[scale=0.4, width=8 cm, height=6 cm]{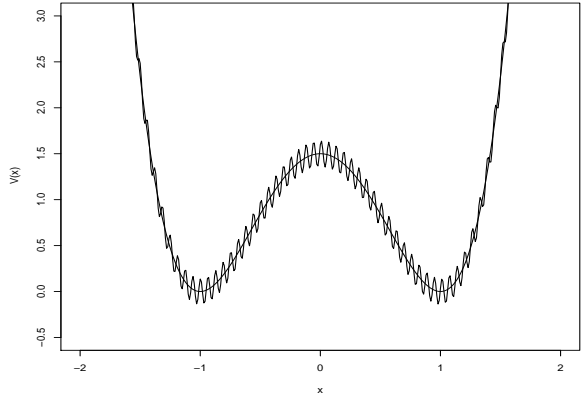}
\end{center}
\caption{ $V^{\epsilon}(x,\frac{x}{\delta})=\epsilon\left(  \cos(\frac
{x}{\delta})+\sin(\frac{x}{\delta})\right)  +\frac{3}{2}(x^{2}-1)^{2}$ and
$V(x)=\frac{3}{2}(x^{2}-1)^{2}$ with $\epsilon=0.1$ and $\delta=0.01$.}%
\end{figure}

To connect to our notation let $b(x,y)=-\nabla Q(y)$ and $c(x,y)=-\nabla V(x)
$, and suppose we consider Regime 1. In this case there is an explicit formula
for the invariant density $\mu(y)$, which is the Gibbs distribution
\[
\mu(y)=\frac{1}{Z}e^{-\frac{Q(y)}{D}},\hspace{0.2cm}Z=\int_{\mathcal{Y}%
}e^{-\frac{Q(y)}{D}}dy.
\]
Moreover, it is easy to see that the centering Condition \ref{A:Assumption2}
holds.

When we have a separable fluctuating part, i.e. $Q(y_{1},y_{2},\ldots
,y_{d})=Q_{1}(y_{1})+Q_{2}(y_{2})+\cdots+Q_{d}(y_{d})$, everything can be
calculated explicitly. We summarize the results in the following corollary.

\hspace{1cm}

\begin{corollary}
\label{C:MainCorollary2} Let $\{X^{\epsilon},\epsilon>0\}$ be the unique
strong solution to (\ref{Eq:LangevinEquation2}). Assume $Q(y_{1},y_{2}%
,\cdots,y_{d})=Q_{1}(y_{1})+Q_{2}(y_{2})+\cdots+Q_{d}(y_{d})$ and consider
Regime $1$. Under Condition \ref{A:Assumption1}, $\{X^{\epsilon},\epsilon>0\}$
satisfies a large deviations principle with rate function
\begin{equation*}
S(\phi)=%
\begin{cases}
\frac{1}{2}\int_{0}^{1}(\dot{\phi}_{s}-r(\phi_{s}))^{T}q^{-1}(\dot{\phi}%
_{s}-r(\phi_{s}))ds & \text{if }\phi\in\mathcal{C}([0,1];\mathbb{R}^{d})\text{
is absolutely continuous }\\
+\infty & \text{otherwise, }%
\end{cases}
\label{ActionFunctional1_2}%
\end{equation*}
where
\[
r(x)=-\Theta\nabla V(x),\hspace{0.2cm}q=2D\Theta,\hspace{0.2cm}\Theta
=\text{diag}\left[  \frac{1}{Z_{1}\hat{Z}_{1}},\cdots,\frac{1}{Z_{d}\hat
{Z}_{d}}\right]
\]
and for $i=1,2,\ldots,d$
\[
Z_{i}=\int_{\mathbb{T}}e^{-\frac{Q_{i}(y_{i})}{D}}dy_{i},\hspace{0.2cm}\hat
{Z}_{i}=\int_{\mathbb{T}}e^{\frac{Q_{i}(y_{i})}{D}}dy_{i}.
\]

\end{corollary}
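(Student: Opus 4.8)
The plan is to derive the statement directly from Theorem~\ref{T:MainTheorem3}, so the real content is just the explicit evaluation of the coefficients $r(x)$ and $q(x)$ of Theorem~\ref{T:ExplicitSolutionRegime1} in the separable case. Since only Condition~\ref{A:Assumption1} is assumed, the first step is to check that the centering Condition~\ref{A:Assumption2} holds: with $b(x,y)=-\nabla Q(y)$ the operator $\mathcal{L}_x^1=-\nabla Q(y)\cdot\nabla_y+D\Delta_y$ has invariant density $\mu(dy|x)=\mu(dy)=Z^{-1}e^{-Q(y)/D}\,dy$, and $\nabla Q(y)\,e^{-Q(y)/D}=-D\,\nabla\bigl(e^{-Q(y)/D}\bigr)$ integrates to zero over the torus by periodicity, so $\int_{\mathcal{Y}}b(x,y)\mu(dy)=0$. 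Thus Theorem~\ref{T:MainTheorem3} applies and gives the LDP with rate function $\tfrac12\int_0^1(\dot\phi_s-r(\phi_s))^Tq^{-1}(\phi_s)(\dot\phi_s-r(\phi_s))\,ds$; it remains to identify $r$ and $q$.

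Next I would solve the cell problem (\ref{Eq:CellProblem}) using the product structure of $Q$. Since $\mathcal{L}_x^1$ decouples across coordinates and $b_\ell(x,y)=-Q_\ell'(y_\ell)$ depends only on $y_\ell$, one seeks $\chi_\ell=\chi_\ell(y_\ell)$; the cell problem $\mathcal{L}_x^1\chi_\ell=Q_\ell'(y_\ell)$ then reduces to the one-dimensional ODE $D\chi_\ell''-Q_\ell'\chi_\ell'=Q_\ell'$, equivalently $D\bigl(e^{-Q_\ell/D}\chi_\ell'\bigr)'=-D\bigl(e^{-Q_\ell/D}\bigr)'$. Integrating gives $1+\chi_\ell'(y_\ell)=C_\ell e^{Q_\ell(y_\ell)/D}$, and $1$-periodicity of $\chi_\ell$ forces $\int_{\mathbb{T}}\chi_\ell'\,dy_\ell=0$, hence $C_\ell=1/\hat Z_\ell$ with $\hat Z_\ell=\int_{\mathbb{T}}e^{Q_\ell/D}\,dy_\ell$. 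By the uniqueness of the solution to the cell problem (guaranteed under Conditions~\ref{A:Assumption1} and~\ref{A:Assumption2}, and unaffected by the additive constant, which does not enter $r$ or $q$), the matrix $I+\tfrac{\partial\chi}{\partial y}(x,y)$ is diagonal with $\ell$-th entry $1+\chi_\ell'(y_\ell)=\hat Z_\ell^{-1}e^{Q_\ell(y_\ell)/D}$.

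With this in hand the formulas of Theorem~\ref{T:ExplicitSolutionRegime1} become one-dimensional integrals against the product measure $\mu(dy)=\prod_\ell Z_\ell^{-1}e^{-Q_\ell(y_\ell)/D}\,dy_\ell$. Because $\sigma\sigma^T=2D\,I$ and $I+\tfrac{\partial\chi}{\partial y}$ is diagonal, $q(x)$ is diagonal; integrating out the $k\neq\ell$ variables (each contributing $1$) its $\ell$-th entry is $2D\int_{\mathbb{T}}\hat Z_\ell^{-2}e^{2Q_\ell/D}\cdot Z_\ell^{-1}e^{-Q_\ell/D}\,dy_\ell=2D/(Z_\ell\hat Z_\ell)$, so $q=2D\Theta$ with $\Theta=\mathrm{diag}[1/(Z_1\hat Z_1),\dots,1/(Z_d\hat Z_d)]$ (positive definite since all $Z_\ell,\hat Z_\ell>0$, so $q^{-1}$ makes sense). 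Likewise, since $c(x,y)=-\nabla V(x)$ is independent of $y$, $r(x)=\bigl[\int_{\mathcal{Y}}(I+\tfrac{\partial\chi}{\partial y})\,\mu(dy)\bigr](-\nabla V(x))$, and the bracketed diagonal matrix has $\ell$-th entry $\int_{\mathbb{T}}\hat Z_\ell^{-1}e^{Q_\ell/D}\cdot Z_\ell^{-1}e^{-Q_\ell/D}\,dy_\ell=1/(Z_\ell\hat Z_\ell)$ (using $|\mathbb{T}|=1$), so $r(x)=-\Theta\nabla V(x)$. Substituting these expressions into the rate function of Theorem~\ref{T:MainTheorem3} gives exactly the claimed formula.

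All of this is essentially bookkeeping; the only step that carries any real weight is the reduction of the cell problem to the decoupled scalar ODEs, which rests on the uniqueness statement cited from \cite{BLP}, and I do not anticipate any serious obstacle.
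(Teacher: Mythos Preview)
Your proposal is correct and follows exactly the route the paper indicates but does not spell out: the paper simply records the invariant density $\mu(y)=Z^{-1}e^{-Q(y)/D}$, notes that the centering Condition~\ref{A:Assumption2} holds, and then states that in the separable case ``everything can be calculated explicitly,'' presenting Corollary~\ref{C:MainCorollary2} without further detail. Your explicit solution of the decoupled one-dimensional cell problems and the subsequent evaluation of $r$ and $q$ are precisely the computations the paper leaves to the reader.
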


Observing the effective diffusivity matrix $q$ in Corollary
\ref{C:MainCorollary2}, we see that the diagonal elements of $q$ are always
smaller than the corresponding diagonal elements of the original one. In the
original multiscale problem there are many small energy barriers. These are
not captured by the homogenized potential and hence must be accounted for in
the homogenized process, and thus the trapping from the many local minima is
responsible for the reduction of the diffusion coefficient.

\section{Regime 2: Laplace principle upper bound and alternative
representation.}

\label{S:LaplacePrincipleRegime2}

In this section we prove the Laplace principle upper bound for Regime 2. We
need several auxiliary results that will be proven in Subsection
\ref{S:BoundedOptimalControlRegime2}. For notational convenience we drop the
superscript $2$ from $\bar{X}^{2}$ and $\mathrm{P}^{2}$.

As was done for Regime $1$ we can define the relaxed and ordinary control
formulations of the local rate function, $L_{2}^{r}(x,\beta)$ and $L_{2}%
^{o}(x,\beta)$, by considering $\lambda_{2}$ and $\mathcal{L}_{z,x}^{2}$ in
place of $\lambda_{1}$ and $\mathcal{L}_{x}^{1}$. For the same reasons as in
Section \ref{S:LaplacePrincipleRegime1} (but also using that $\mathcal{L}%
_{z,x}^{2}$ is affine in $z$), these two expressions coincide. The key
difference between this case and the last is that $\mathcal{L}_{z,x}^{2}$
depends on $z$, while $\mathcal{L}_{x}^{1}$ did not. This means that relations
between the elements of a viable pair are more complex, and in particular that
the joint distribution of the control $z$ and fast variable $y$ is important.

Similarly to what was done in Regime $1$, the limiting occupation measure
$\mathrm{P}\in\mathcal{P}(\mathcal{Z}\times\mathcal{Y}\times\lbrack0,1])$ can
be decomposed as stochastic kernels in the form
\begin{equation*}
\mathrm{P}(dzdydt)=\eta(dz|y,t)\mu(dy|t)dt.
\label{Eq:DecompositionOfInvariantMeasureRegime2}%
\end{equation*}
Moreover, by Theorem \ref{T:MainTheorem1} we have $(\bar{X},\mathrm{P}%
)\in\mathcal{V}_{\lambda_{2},\mathcal{L}^{2}}$. We will use that both
$\lambda_{2}$ and $\mathcal{L}_{z,x}^{2}$ are affine in $z$. If
$v(t,y):[0,1]\times\mathcal{Y}\mapsto\mathbb{R}^{d}$ is defined by
\[
v(t,y)=\int_{\mathcal{Z}}z\eta(dz|y,t),
\]
then by viability $\bar{X}_{t}$ satisfies%
\begin{equation*}
\bar{X}_{t}=x_{0}+\int_{0}^{t}\left[  \int_{\mathcal{Y}}\left(  \gamma
b(\bar{X}_{s},y)+c(\bar{X}_{s},y)+\sigma(\bar{X}_{s},y)v(s,y)\right)
\mu(dy|s)\right]  ds \label{Eq:AccumulationPointsProcessRegime2_1a}%
\end{equation*}
where $\mu$ is such that for all $f\in\mathcal{C}^{2}(\mathcal{Y})$ and
$t\in\lbrack0,1]$%
\begin{equation*}
\int_{0}^{t}\int_{\mathcal{Y}}\mathcal{L}_{v(s,y),\bar{X}_{s}}^{2}%
f(y)\mu(dy|s)=0. \label{Eq:AccumulationPointsProcessRegime2_2a}%
\end{equation*}

\noindent\textit{Proof of Laplace principle upper bound for Regime 2.}

We need to prove that
\[
\limsup_{\epsilon\downarrow0}-\epsilon\ln\mathbb{E}_{x_{0}}\left[
\exp\left\{  -\frac{h(X^{\epsilon})}{\epsilon}\right\}  \right]  \leq
\inf_{\phi\in\mathcal{C}([0,1];\mathbb{R}^{d})}\left[  S(\phi)+h(\phi)\right]
.
\]
For given $\eta>0$ we can find $\psi\in\mathcal{C}([0,1];\mathbb{R}^{d})$ with
$\psi_{0}=x_{0}$ such that
\begin{equation}
S(\psi)+h(\psi)\leq\inf_{\phi\in\mathcal{C}([0,1];\mathbb{R}^{d})}\left[
S(\phi)+h(\phi)\right]  +\eta<\infty.
\label{Eq:NearlyOPtimalTrajectoryRegime2}%
\end{equation}
Since $h$ is bounded, this implies that $S(\psi)<\infty$, and thus $\psi$ is
absolutely continuous.

Let
\begin{align*}
\mathcal{A}_{x,\beta}^{2,o}  &  =\left\{  v(\cdot):\mathcal{Y}\mapsto
\mathbb{R}^{d},\mu\in\mathcal{P}(\mathcal{Y})\hspace{0.1cm}:\hspace
{0.1cm}(v,\mu)\text{ satisfy }\int_{\mathcal{Y}}\mathcal{L}_{v(y),x}%
^{2}f(y)\mu(dy)=0\right. \\
&  \left.  \hspace{1.8cm}\text{ for all }f\in C^{2}(\mathcal{Y}),
\int_{\mathcal{Y}}\left\Vert v(y)\right\Vert ^{2}\mu(dy)<\infty\text{ and
}\beta=\int_{\mathcal{Y}}\lambda_{2}(x,y,v(y))\mu(dy)\right\}  .
\end{align*}
Then the ordinary control formulation of the local rate function is
\begin{equation}
L_{2}^{o}(x,\beta)=\inf_{(v,\mu)\in\mathcal{A}_{x,\beta}^{2,o}}\left\{
\frac{1}{2}\int_{\mathcal{Y}}\left\Vert v(y)\right\Vert ^{2}\mu(dy)\right\}  .
\label{Def:LocalRateFunctionRegime2}%
\end{equation}
Calling this an \textquotedblleft ordinary control
formulation\textquotedblright\ is perhaps a bit misleading. Invariant measures
are in general characterized by equations of the form
\begin{equation}
\int_{\mathcal{Y}}\mathcal{L}_{v(y),x}^{2}f(y)\mu(dy)=0
\label{Eq:AccumulationPointsProcessRegime2_2b}%
\end{equation}
where $v(\cdot):\mathcal{Y}\mapsto\mathcal{Z}$ plays the role of a feedback
control. In the definition of $\mathcal{A}_{x,\beta}^{2,o}$ no claim is made
that $\mu$ is an invariant distribution for any controlled dynamics. [This was
not an issue in Regime 1 since $\mathcal{L}_{x}^{1}$ did not depend on $z$.
Hence there was only one invariant distribution that did not depend in any way
on the control.] In fact for some choices of $v$ it may be difficulty to argue
that an invariant distribution corresponding to $\mathcal{L}_{v(y),x}^{2}$
exists. However, we will use results from \cite{KurtzStockbridge} that allow
us to represent $L_{2}^{o}(x,\beta)$ in terms of the average cost of an
ergodic control problem for which the Bellman equation has a classical sense
solution. This will lead to a control $v$ that is bounded and Lipschitz
continuous, and hence for the corresponding controlled diffusion there will be
a unique invariant distribution $\mu$ such that the pair satisfy
(\ref{Eq:AccumulationPointsProcessRegime2_2b}).

By Theorem \ref{T:LocalRateFunctionRegime2} below, $L_{2}^{o}(x,\beta)$ is
continuous and finite at each $(x,\beta)\in\mathbb{R}^{2d}$. Thus, by a
standard mollification argument, we can further assume that $\dot{\psi}$ is
piecewise constant (see for example Subsection $6.5$ in \cite{DupuisEllis}).
Theorem \ref{T:BoundedOptimalControlRegime2} below implies that there is
$\bar{u}(t,x,y)$ that is bounded, continuous in $x$ and Lipschitz continuous
$y$, and piecewise constant in $t$ and which satisfies
\begin{equation}
\bar{u}(t,x,\cdot)\in\text{argmin}_{v}\left\{  \frac{1}{2}\int_{\mathcal{Y}%
}\left\Vert v(y)\right\Vert ^{2}\mu(dy)\hspace{0.1cm}:(v,\mu)\in
\mathcal{A}_{x,\dot{\psi}_{t}}^{2,o}\right\}  .
\label{Eq:OptimalControlRegime2_1}%
\end{equation}

As remarked previously for this particular control $\bar{u}$ the invariant
measure corresponding to the operator $\mathcal{L}_{\bar{u},x}^{2}$ is unique
and will be denoted by $\bar{\mu}_{\bar{u}}(dy)$. The control used in the
large deviation problem (in feedback form) is then%
\[
\bar{u}^{\epsilon}(t)=\bar{u}\left(  t,\bar{X}_{t}^{\epsilon},\frac{\bar
{X}_{t}^{\epsilon}}{\delta}\right)  .
\]
Since $\sigma\sigma^{T}$ is uniformly nondegenerate and Lipschitz continuous
and since $\bar{u}$ is continuous in $x$ and $y$, a strong solution to
(\ref{Eq:LDPandA2}) exists. By standard averaging theory and the fact that
$\bar{\mu}_{\bar{u}(t,x,\cdot)}(\cdot)$ is continuous in $x$ (Theorem
\ref{T:BoundedOptimalControlRegime2}) and piecewise continuous in $t$ we have
that $\bar{X}^{\epsilon}\overset{\mathcal{D}}{\rightarrow}\bar{X}$, where
\begin{equation*}
\bar{X}_{t}=x_{0}+\int_{0}^{t}\int_{\mathcal{Y}}\lambda_{2}\left(  \bar{X}%
_{s},y,\bar{u}(s,\bar{X}_{s},y)\right)  \bar{\mu}_{\bar{u}(s,\bar{X}_{s}%
,\cdot)}(dy)ds.\label{Eq:LimitingXReg2}%
\end{equation*}
Since (\ref{Eq:OptimalControlRegime2_1}) holds we get, for $\psi$ such that
$\psi_{0}=x_{0}$,
\[
\bar{X}_{t}=x_{0}+\int_{0}^{t}\dot{\psi}_{s}ds=\psi_{t}\hspace{0.2cm}\text{
for any }t\in\lbrack0,1]\text{, w.p.}1.
\]
Taking into account the above facts, we have the following chain of
inequalities:
\begin{align}
\limsup_{\epsilon\downarrow0}\left[  -\epsilon\ln\mathbb{E}_{x_{0}}\left[
\exp\left\{  -\frac{h(X^{\epsilon})}{\epsilon}\right\}  \right]  \right]   &
=\limsup_{\epsilon\downarrow0}\inf_{u}\mathbb{E}_{x_{0}}\left[  \frac{1}%
{2}\int_{0}^{1}\left\Vert u_{t}\right\Vert ^{2}dt+h(\bar{X}^{\epsilon
})\right]  \nonumber\\
&  \leq\limsup_{\epsilon\downarrow0}\mathbb{E}_{x_{0}}\left[  \frac{1}{2}%
\int_{0}^{1}\left\Vert \bar{u}^{\epsilon}(t)\right\Vert ^{2}dt+h(\bar
{X}^{\epsilon})\right]  \nonumber\\
&  =\mathbb{E}_{x_{0}}\left[  \frac{1}{2}\int_{0}^{1}\int_{\mathcal{Y}%
}\left\Vert \bar{u}(t,\bar{X}_{t},y)\right\Vert ^{2}\bar{\mu}_{\bar{u}%
(t,\bar{X}_{t},\cdot)}(dy)dt+h(\bar{X})\right]  \nonumber\\
&  =\mathbb{E}_{x_{0}}\left[  S(\bar{X})+h(\bar{X})\right]  \nonumber\\
&  =S(\psi)+h(\psi)\nonumber\\
&  \leq\inf_{\phi\in\mathcal{C}([0,1];\mathbb{R}^{d})}\left[  S(\phi
)+h(\phi)\right]  +\eta.\nonumber
\end{align}
Line $1$ follows from the representation Theorem \ref{T:RepresentationTheorem}%
. Line $2$ follows from the choice of the particular control. Line $3$ follows
from the definition of the control by the minimization problem above and the
continuity of $h$. Line $4$ follows from the definition of $S$. Line 6 is from
(\ref{Eq:NearlyOPtimalTrajectoryRegime2}). Finally, since $\eta$ is arbitrary,
we are done. \medskip\hfill$\square$

In fact, the considerations above allow us to derive an alternative
representation formula for the rate function in Regime 2. We summarize the
results in the following theorem.

\begin{theorem}
\label{T:MainTheorem4Regime2} Let $\{X^{\epsilon}, \epsilon>0\}$ be the unique
strong solution to (\ref{Eq:LDPandA1}) such that Condition \ref{A:Assumption1}
holds and assume that we are considering Regime $2$. Then $\{X^{\epsilon},
\epsilon>0\}$ satisfies a large deviations principle with rate function
\begin{align}
S(\phi)=
\begin{cases}
\int_{0}^{1}L_{2}^{o}(\phi_{s},\dot{\phi}_{s})ds & \text{if }\phi
\in\mathcal{C}([0,1];\mathbb{R}^{d}) \text{ is absolutely continuous}\\
+\infty & \text{otherwise }.
\end{cases}
\nonumber
\end{align}

\end{theorem}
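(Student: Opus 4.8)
Since the Laplace principle lower bound and the compactness of the level sets for the variational rate function (\ref{Eq:GeneralRateFunction}) were established in Section~\ref{S:LowerBoundLowerSemicontinuity}, and the matching Laplace principle upper bound was proved above in this section, Theorem~\ref{T:MainTheorem2} already gives that $\{X^{\epsilon},\epsilon>0\}$ satisfies the Laplace principle with rate function $S=S^{2}$, namely the infimum over viable pairs $(\phi,\mathrm{P})\in\mathcal{V}_{(\lambda_{2},\mathcal{L}^{2})}$ of $\frac12\int_{\mathcal{Z}\times\mathcal{Y}\times[0,1]}\|z\|^{2}\mathrm{P}(dzdydt)$. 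The plan is therefore to prove the identity
\[
S(\phi)=\int_{0}^{1}L_{2}^{o}(\phi_{s},\dot{\phi}_{s})\,ds
\]
for absolutely continuous $\phi$, both sides being $+\infty$ otherwise. If $\phi$ is not absolutely continuous, then by Definition~\ref{Def:ViablePair} no viable pair has $\phi$ as its first component, so $S(\phi)=+\infty$ and the identity is trivial; assume henceforth that $\phi$ is absolutely continuous.

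For the inequality $S(\phi)\ge\int_{0}^{1}L_{2}^{o}(\phi_{s},\dot{\phi}_{s})\,ds$ I would localize in time. Let $(\phi,\mathrm{P})\in\mathcal{V}_{(\lambda_{2},\mathcal{L}^{2})}$ have finite cost, and use (\ref{Eq:AccumulationPointsFullMeasureViable}) to disintegrate $\mathrm{P}(dzdydt)=\mathrm{P}_{t}(dzdy)\,dt$. Since (\ref{Eq:AccumulationPointsProcessViable}) and (\ref{Eq:AccumulationPointsMeasureViable}) hold for \emph{every} $t$, Lebesgue differentiation yields, for a.e.\ $t$, that $\dot{\phi}_{t}=\int_{\mathcal{Z}\times\mathcal{Y}}\lambda_{2}(\phi_{t},y,z)\,\mathrm{P}_{t}(dzdy)$ and $\int_{\mathcal{Z}\times\mathcal{Y}}\mathcal{L}^{2}_{z,\phi_{t}}f(y)\,\mathrm{P}_{t}(dzdy)=0$; taking $f$ in a countable family dense in $\mathcal{C}^{2}(\mathcal{Y})$ together with a common null set, the latter holds for all $f\in\mathcal{C}^{2}(\mathcal{Y})$. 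Disintegrating further $\mathrm{P}_{t}(dzdy)=\eta_{t}(dz|y)\mu_{t}(dy)$ and setting $v_{t}(y)=\int_{\mathcal{Z}}z\,\eta_{t}(dz|y)$, the affineness of $\lambda_{2}$ and of $\mathcal{L}^{2}_{z,x}$ in $z$ shows $(v_{t},\mu_{t})\in\mathcal{A}^{2,o}_{\phi_{t},\dot{\phi}_{t}}$, while Jensen's inequality gives $\frac12\int_{\mathcal{Z}\times\mathcal{Y}}\|z\|^{2}\mathrm{P}_{t}(dzdy)\ge\frac12\int_{\mathcal{Y}}\|v_{t}(y)\|^{2}\mu_{t}(dy)\ge L_{2}^{o}(\phi_{t},\dot{\phi}_{t})$. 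Integrating over $t\in[0,1]$ and then taking the infimum over $(\phi,\mathrm{P})$ gives the claimed bound.

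For the reverse inequality, suppose $\int_{0}^{1}L_{2}^{o}(\phi_{s},\dot{\phi}_{s})\,ds<\infty$. By Theorem~\ref{T:LocalRateFunctionRegime2}, $L_{2}^{o}=L_{2}^{r}$ is finite and continuous, so $s\mapsto L_{2}^{r}(\phi_{s},\dot{\phi}_{s})$ is measurable, and a measurable selection argument (applicable because the relaxed constraint set $\mathcal{A}^{2,r}_{x,\beta}$ is described by equalities that are continuous in $(x,\beta)$ and because $\mathrm{P}\mapsto\frac12\int\|z\|^{2}\mathrm{P}$ is lower semicontinuous) produces, for each $\varepsilon>0$, a measurable map $s\mapsto\mathrm{P}_{s}\in\mathcal{A}^{2,r}_{\phi_{s},\dot{\phi}_{s}}$ with $\frac12\int\|z\|^{2}\mathrm{P}_{s}(dzdy)\le L_{2}^{r}(\phi_{s},\dot{\phi}_{s})+\varepsilon$. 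Setting $\mathrm{P}(dzdydt)=\mathrm{P}_{t}(dzdy)\,dt$, Fubini's theorem shows that $(\phi,\mathrm{P})$ satisfies (\ref{Eq:AccumulationPointsProcessViable}), (\ref{Eq:AccumulationPointsMeasureViable}) and (\ref{Eq:AccumulationPointsFullMeasureViable}), hence $(\phi,\mathrm{P})\in\mathcal{V}_{(\lambda_{2},\mathcal{L}^{2})}$ with cost at most $\int_{0}^{1}L_{2}^{o}(\phi_{s},\dot{\phi}_{s})\,ds+\varepsilon$; letting $\varepsilon\downarrow0$ gives $S(\phi)\le\int_{0}^{1}L_{2}^{o}(\phi_{s},\dot{\phi}_{s})\,ds$. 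Alternatively, $\int_{0}^{1}L_{2}^{o}(\phi_{s},\dot{\phi}_{s})\,ds$ is itself a rate function—lower semicontinuous since $L_{2}^{o}(x,\cdot)$ is convex, and with compact level sets since $L_{2}^{o}$ is coercive in $\beta$ uniformly on compact $x$-sets—so this inequality also follows from the uniqueness of the rate function in the already proved large deviations principle. Combining the two inequalities completes the proof.

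The step I expect to be the main obstacle is the analysis of the local rate function $L_{2}^{o}$ underlying both directions and the upper bound: one needs $L_{2}^{o}(x,\beta)$ to be finite, continuous and coercive in $\beta$, and, for the upper bound construction, a near-optimal feedback control that is bounded, continuous in $x$ and Lipschitz in $y$. Unlike Regime~1, the operator $\mathcal{L}^{2}_{z,x}$ depends on the control, so for a generic feedback $v$ there need not exist an invariant distribution satisfying $\int_{\mathcal{Y}}\mathcal{L}^{2}_{v(y),x}f(y)\,\mu(dy)=0$; one must instead represent $L_{2}^{o}$ through an ergodic control problem whose associated Bellman equation admits a classical solution. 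This is precisely the content of Subsection~\ref{S:BoundedOptimalControlRegime2} (Theorems~\ref{T:LocalRateFunctionRegime2} and \ref{T:BoundedOptimalControlRegime2}), and it is where the substantive work lies.
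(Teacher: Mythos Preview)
Your proposal is correct and follows essentially the same approach as the paper: the LDP with the viable-pair rate function $S^{2}$ is already in place by Theorem~\ref{T:MainTheorem2}, and Theorem~\ref{T:MainTheorem4Regime2} amounts to the identity $S^{2}(\phi)=\int_{0}^{1}L_{2}^{o}(\phi_{s},\dot{\phi}_{s})\,ds$, which the paper treats as an immediate consequence of the definition of a viable pair (exactly as it does for Regime~1 at the start of Section~\ref{S:LaplacePrincipleRegime1}) together with $L_{2}^{r}=L_{2}^{o}$.

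The only point of mild divergence is in the direction $S^{2}(\phi)\le\int_{0}^{1}L_{2}^{o}(\phi_{s},\dot{\phi}_{s})\,ds$. You invoke a measurable selection $s\mapsto\mathrm{P}_{s}\in\mathcal{A}^{2,r}_{\phi_{s},\dot{\phi}_{s}}$ (or, alternatively, uniqueness of the rate function). The paper instead obtains this inequality constructively from the upper-bound argument itself: for $\psi$ with piecewise constant derivative, Theorem~\ref{T:BoundedOptimalControlRegime2} supplies the optimal feedback $\bar{u}$ and the associated unique invariant measure $\bar{\mu}_{\bar{u}}$, and $\mathrm{P}_{t}(dzdy)=\delta_{\bar{u}(t,\phi_{t},y)}(dz)\,\bar{\mu}_{\bar{u}}(dy|\phi_{t})$ is then a viable kernel with cost exactly $L_{2}^{o}(\phi_{t},\dot{\phi}_{t})$; the general absolutely continuous case follows by the mollification step already used in the upper-bound proof. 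Both routes are valid; the paper's has the advantage of avoiding an abstract selection theorem and reusing precisely the construction needed for the Laplace upper bound, while yours makes the logical dependence on Subsection~\ref{S:BoundedOptimalControlRegime2} more transparent. Your closing remark correctly identifies that subsection as the place where the real work is done.
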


\subsection{Properties of the local rate function and of the optimal control
for Regime $2$.}

\label{S:BoundedOptimalControlRegime2} In this section we study the local rate
function $L_{2}^{o}(x,\beta)=\inf_{(v,\mu)\in\mathcal{A}_{x,\beta}^{2,o}%
}\{\frac{1}{2}\int_{\mathcal{Y}}\left\Vert v(y)\right\Vert ^{2}\mu(dy)\}$. The
main theorems of this section are the following two.

\begin{theorem}
\label{T:BoundedOptimalControlRegime2} Assume Condition \ref{A:Assumption1}.
Then there is a pair $(\bar{u},\bar{\mu})$ that achieves the infimum in the
definition of the local rate function such that $\bar{u}=\bar{u}_{\beta}(x,y)$
is, for each fixed $\beta\in\mathbb{R}^{d}$, continuous in $x$, Lipschitz
continuous in $y$ and measurable in $(x,y,\beta)$. Moreover, $\bar{\mu
}(dy)=\bar{\mu}_{\bar{u}}(dy|x)$ is the unique invariant measure corresponding
to the operator $\mathcal{L}_{\bar{u}_{\beta}(x,y),x}^{2}$ and it is weakly
continuous as a function of $x$.
\end{theorem}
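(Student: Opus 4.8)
\emph{Proof proposal.} The plan is to express the local rate function $L_2^o(x,\beta)$ as the value of a \emph{constrained} ergodic (long--run average cost) control problem for a nondegenerate diffusion on the torus $\mathcal{Y}=\mathbb{T}^d$, to remove the constraint $\beta=\int_{\mathcal{Y}}\lambda_2(x,y,v(y))\mu(dy)$ by a Lagrange multiplier $\theta\in\mathbb{R}^d$, and to solve the resulting family of unconstrained ergodic problems through their Hamilton--Jacobi--Bellman (HJB) equations. The crucial structural fact is that in Regime $2$ the operator $\mathcal{L}_{z,x}^2$ has diffusion part $\tfrac{\gamma}{2}\sigma\sigma^T$, which is uniformly nondegenerate because $\gamma>0$ and $\sigma\sigma^T$ is uniformly nondegenerate; hence the relevant HJB equation is uniformly elliptic on $\mathbb{T}^d$ and, following \cite{KurtzStockbridge}, the ergodic control problem has a classical-sense Bellman solution with an associated bounded feedback. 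Setting $\rho_\theta(x)=\inf\{\int_{\mathcal{Y}}(\tfrac12\|v(y)\|^2-\theta\cdot\lambda_2(x,y,v(y)))\mu(dy)\}$ over pairs $(v,\mu)$ with $\mu$ infinitesimally invariant for $\mathcal{L}_{v(\cdot),x}^2$, the linear-programming structure of ergodic control (cost convex in $v$, and both $\lambda_2$ and $\mathcal{L}_{z,x}^2$ affine in $z$, with $L_2^o(x,\beta)$ a priori finite) yields strong duality:
\[
L_2^o(x,\beta)=\sup_{\theta\in\mathbb{R}^d}\bigl[\theta\cdot\beta+\rho_\theta(x)\bigr].
\]

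For fixed $(x,\theta)$ the minimization over the (drift-only, quadratic-cost) control is explicit, and the ergodic HJB equation for the relative value function $W_\theta(x,\cdot)$ and the constant $\rho_\theta(x)$ reads
\[
\rho_\theta(x)=\tfrac{\gamma}{2}\sigma\sigma^T(x,y):\nabla_y^2W_\theta+(\gamma b+c)(x,y)\cdot\nabla_yW_\theta-\theta\cdot(\gamma b+c)(x,y)-\tfrac12\bigl\|\sigma^T(x,y)(\theta-\nabla_yW_\theta)\bigr\|^2,
\]
a uniformly elliptic semilinear equation on $\mathbb{T}^d$ with Lipschitz coefficients. Standard theory gives existence of the pair $(\rho_\theta(x),W_\theta(x,\cdot))$, uniqueness of $\rho_\theta(x)$ and of $W_\theta(x,\cdot)$ up to an additive constant, and, after a $W^{2,p}$/Schauder bootstrap, $W_\theta(x,\cdot)\in C^{2,\alpha}(\mathbb{T}^d)$; the optimal feedback is $\bar v_\theta(x,y)=\sigma^T(x,y)(\theta-\nabla_yW_\theta(x,y))$, which is bounded and Lipschitz in $y$. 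Since $\bar v_\theta(x,\cdot)$ is Lipschitz and the diffusion nondegenerate, the controlled process has a unique invariant probability measure $\bar\mu_\theta(\cdot\,|\,x)$, and the usual verification argument (integrate the HJB against $\bar\mu_\theta$) shows $(\bar v_\theta,\bar\mu_\theta)$ attains $\rho_\theta(x)$.

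Next I would show the supremum over $\theta$ is attained and recovers the constraint, and then transfer regularity to the parameters. The map $\theta\mapsto\rho_\theta(x)$ is concave (an infimum of functions affine in $\theta$); testing with the feedback $v(y)=R\,\sigma^T(x,y)\hat\theta$ for $\theta=R\hat\theta$ gives $\rho_\theta(x)\le-\tfrac12\lambda_{\min}\|\theta\|^2+C\|\theta\|$ with $\lambda_{\min}>0$ the ellipticity constant of $\sigma\sigma^T$, so $\theta\cdot\beta+\rho_\theta(x)\to-\infty$ and the supremum is attained at some $\theta^*=\theta^*(x,\beta)$. By an envelope argument $\nabla_\theta\rho_\theta(x)=-\int_{\mathcal{Y}}\lambda_2(x,y,\bar v_\theta(x,y))\bar\mu_\theta(dy\,|\,x)$, so the optimality condition $\beta+\nabla_\theta\rho_{\theta^*}(x)=0$ is exactly $\int_{\mathcal{Y}}\lambda_2(x,y,\bar v_{\theta^*}(x,y))\bar\mu_{\theta^*}(dy\,|\,x)=\beta$, i.e.\ $(\bar v_{\theta^*},\bar\mu_{\theta^*})\in\mathcal{A}_{x,\beta}^{2,o}$; and substituting back into the duality identity shows $\tfrac12\int_{\mathcal{Y}}\|\bar v_{\theta^*}\|^2\bar\mu_{\theta^*}=\theta^*\cdot\beta+\rho_{\theta^*}(x)=L_2^o(x,\beta)$, so this pair is optimal. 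Put $\bar u_\beta(x,y)=\bar v_{\theta^*(x,\beta)}(x,y)$ and $\bar\mu_{\bar u}(dy\,|\,x)=\bar\mu_{\theta^*(x,\beta)}(dy\,|\,x)$. Uniqueness of $\theta^*(x,\beta)$ (strict concavity of $\theta\mapsto\rho_\theta(x)$, using uniform nondegeneracy of $\sigma\sigma^T$) together with continuous dependence of $(\rho_\theta(x),W_\theta(x,\cdot))$ on $(x,\theta)$ — a standard stability statement for the uniformly elliptic HJB, since $b,c,\sigma$ vary continuously in $x$ and one has uniform $C^{2,\alpha}$ bounds — gives, via Berge's maximum theorem and a measurable-maximum selection, that $x\mapsto\theta^*(x,\beta)$ is continuous for each fixed $\beta$ and jointly Borel in $(x,\beta)$. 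Hence $\bar u_\beta(x,y)=\sigma^T(x,y)(\theta^*(x,\beta)-\nabla_yW_{\theta^*(x,\beta)}(x,y))$ is continuous in $x$, Lipschitz in $y$, and measurable in $(x,y,\beta)$; and weak continuity of $x\mapsto\bar\mu_{\bar u}(dy\,|\,x)$ follows from continuity in $x$ of the coefficients and of $\bar u_\beta(x,\cdot)$, plus uniqueness of the invariant measure of the limiting nondegenerate Lipschitz controlled diffusion, by a tightness-and-identification argument.

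The main obstacle I expect is concentrated in the regularity and parameter-dependence: first, extracting $W_\theta(x,\cdot)\in C^{2,\alpha}$ — so that the feedback is genuinely Lipschitz in $y$ and the invariant measure classical — \emph{uniformly and continuously in $(x,\theta)$} from only Lipschitz coefficients, which forces the bootstrap through $W^{2,p}$ estimates; and second, and more delicate, establishing that the dual variable $\theta^*(x,\beta)$ is single-valued and continuous in $x$, since this is precisely where the affine dependence of $\lambda_2$ and $\mathcal{L}_{z,x}^2$ on $z$ and the nondegeneracy of $\sigma\sigma^T$ must be combined, and is the substance of why the $x$-dependent extension of \cite{FS} (flagged in the introduction) is nontrivial.
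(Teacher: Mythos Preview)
Your proposal is correct and follows essentially the same route as the paper: Lagrangian duality with a multiplier $\theta$ (the paper takes $\theta=\zeta_\beta\in\partial L_2^o(x,\beta)$, which is your $\theta^*$), reduction to an unconstrained ergodic control problem on $\mathbb{T}^d$ via \cite{KurtzStockbridge}, classical solvability of the uniformly elliptic HJB, and the feedback $\bar u=\sigma^T(\theta-\nabla_yW_\theta)$; the paper then obtains continuity in $x$ by first proving joint continuity of $L_2^o$ and then continuity of $\zeta_\beta(x)=\nabla_\beta L_2^o(x,\beta)$, which plays the role of your Berge/selection step. You have correctly isolated the crux: single-valuedness of $\theta^*$ (equivalently strict concavity of $\theta\mapsto\rho_\theta(x)$, equivalently strict convexity of the Legendre transform $H$ of $L_2^o$) is not a soft consequence of nondegeneracy and is exactly where the paper invests the most work, via a perturbation of the optimal control by $\eta\sigma^{-1}\theta h$ and a first-variation calculation against the invariant density (Lemmas \ref{L:Differentiability1} and \ref{L:DifferentiabilityMainLemma}); your sketch would need an argument of comparable strength at that point.
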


\begin{theorem}
\label{T:LocalRateFunctionRegime2} Assume Condition \ref{A:Assumption1}. Then,
the local rate function $L^{o}_{2}(x,\beta)$ is finite, continuous at each
$(x,\beta)\in\mathbb{R}^{2d}$ and differentiable with respect to $\beta$.
\end{theorem}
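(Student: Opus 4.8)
The plan is to prove, in order, that $L_2^o$ is finite with a locally-uniform-in-$x$ quadratic bound, that it is jointly lower semicontinuous, that it is jointly continuous, and finally that it is differentiable in $\beta$; the last assertion is the substantive one and I would obtain it from the convex dual. Finiteness is easy: given $(x,\beta)$, set $v(y)=\sigma^{-1}(x,y)\bigl(\beta-\gamma b(x,y)-c(x,y)\bigr)$, which is Lipschitz in $y$ by Condition~\ref{A:Assumption1} (Lipschitz coefficients, uniform nondegeneracy of $\sigma\sigma^{T}$; recall $\gamma>0$ in Regime~2, so $\gamma\sigma\sigma^{T}$ is uniformly elliptic). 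Then $\lambda_2(x,y,v(y))=\gamma b+c+\sigma v\equiv\beta$, the diffusion on $\mathcal{Y}=\mathbb{T}^d$ with generator $\mathcal{L}^2_{v(\cdot),x}$ has constant drift $\beta$, and its unique invariant measure $\mu$ has a positive density of Sobolev class, satisfies $\int_{\mathcal{Y}}\mathcal{L}^2_{v(y),x}f(y)\,\mu(dy)=0$ for all $f\in C^2(\mathcal{Y})$, and $\int_{\mathcal{Y}}\lambda_2(x,y,v(y))\,\mu(dy)=\beta$. Thus $(v,\mu)\in\mathcal{A}^{2,o}_{x,\beta}$, so this set is nonempty and
\[
0\le L_2^o(x,\beta)\le\tfrac12\Bigl(\sup_{y}\|\sigma^{-1}(x,y)\|\Bigr)^2\bigl(\|\beta\|+\|\gamma b\|_\infty+\|c\|_\infty\bigr)^2,
\]
a bound locally bounded in $x$.

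For joint lower semicontinuity, take $(x_n,\beta_n)\to(x,\beta)$ and near-optimizers $(v_n,\mu_n)\in\mathcal{A}^{2,o}_{x_n,\beta_n}$; by the bound above the costs are bounded, so passing to the relaxed measures $\mathrm{P}_n(dzdy)=\delta_{v_n(y)}(dz)\mu_n(dy)$ the family $\{\mathrm{P}_n\}$ is tight (as in Proposition~\ref{P:tightness}(i)) and, along a subsequence, $\mathrm{P}_n\to\mathrm{P}$. Since $\mathcal{L}^2_{z,x}f$ and $\lambda_2(x,y,z)$ are affine in $z$, continuous in $(x,y)$, with coefficients converging uniformly as $x_n\to x$, and since $\sup_n\int\|z\|^2\,d\mathrm{P}_n<\infty$ gives uniform integrability in $z$, the reasoning behind Lemma~\ref{L:ContinuityMap} gives $\mathrm{P}\in\mathcal{A}^{2,r}_{x,\beta}$, and Fatou plus weak lower semicontinuity of $\mathrm{P}\mapsto\int\|z\|^2\,d\mathrm{P}$ give $L_2^r(x,\beta)\le\liminf_n L_2^o(x_n,\beta_n)$; since $L_2^r=L_2^o$ (Jensen together with the affineness in $z$ of $\mathcal{L}^2_{z,x}$ and $\lambda_2$, as noted at the start of Section~\ref{S:LaplacePrincipleRegime2}), lower semicontinuity follows. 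For continuity, $\beta\mapsto L_2^r(x,\beta)$ is convex, being the infimum of the linear functional $\int\|z\|^2\,d\mathrm{P}$ over the convex set of stationary square-integrable $\mathrm{P}$ subject to the linear barycenter constraint; being finite and convex it is continuous in $\beta$, and being nonnegative and, by the quadratic bound, bounded on each ball uniformly for $x$ in a compact set, it is Lipschitz in $\beta$ on each fixed ball with a constant uniform in such $x$. Joint upper semicontinuity at $(x_0,\beta_0)$ then follows by perturbing only in $x$: take $(\bar v,\bar\mu)\in\mathcal{A}^{2,o}_{x_0,\beta_0}$ with $\bar v$ bounded and Lipschitz in $y$ (Theorem~\ref{T:BoundedOptimalControlRegime2}, or a good near-optimizer) and cost within $\eta$ of $L_2^o(x_0,\beta_0)$, and let $\mu_n$ be the unique invariant measure of $\mathcal{L}^2_{\bar v(\cdot),x_n}$; standard continuity in the parameters of invariant measures of uniformly elliptic diffusions on $\mathbb{T}^d$ gives $\mu_n\to\bar\mu$, hence $\tfrac12\int\|\bar v\|^2\,d\mu_n\to\tfrac12\int\|\bar v\|^2\,d\bar\mu$ and $\beta_n':=\int\lambda_2(x_n,y,\bar v(y))\,\mu_n(dy)\to\beta_0$, so $(\bar v,\mu_n)\in\mathcal{A}^{2,o}_{x_n,\beta_n'}$ and $\limsup_n L_2^o(x_n,\beta_n')\le L_2^o(x_0,\beta_0)+\eta$. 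The uniform local Lipschitz bound in $\beta$ absorbs the discrepancies $\beta_n'-\beta_0$ and, for a sequence $\beta_n\to\beta_0$, $\beta_n-\beta_0$, giving $\limsup_n L_2^o(x_n,\beta_n)\le L_2^o(x_0,\beta_0)+\eta$; letting $\eta\downarrow0$ proves joint continuity. (No exact controllability correction of the barycenter is needed, the Lipschitz bound does the work.)

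The differentiability in $\beta$ is the hard part, and for it I would pass to the convex dual. Applying the linear-programming duality of \cite{KurtzStockbridge} to the relaxed problem --- the same analysis producing the classical-sense Bellman equation of Theorem~\ref{T:BoundedOptimalControlRegime2} --- and carrying out the pointwise minimization over $z$ of $\tfrac12\|z\|^2-\langle\alpha,\sigma z\rangle-\sigma z\cdot\nabla_y f$, one arrives at
\[
L_2^o(x,\beta)=\sup_{\alpha\in\mathbb{R}^d}\bigl[\langle\alpha,\beta\rangle-H(x,\alpha)\bigr],\qquad
H(x,\alpha)=\inf_{f\in C^2(\mathcal{Y})}\,\sup_{y\in\mathcal{Y}}\Bigl[\langle\alpha,\gamma b+c\rangle+(\gamma b+c)\cdot\nabla_y f+\tfrac{\gamma}{2}\sigma\sigma^{T}{:}\nabla_y\nabla_y f+\tfrac12\|\sigma^{T}(\alpha+\nabla_y f)\|^2\Bigr],
\]
so that $H(x,\alpha)$ is the unique ergodic constant of this uniformly elliptic, smooth ergodic Hamilton--Jacobi equation on $\mathbb{T}^d$; equivalently $-H(x,\cdot)$ is the value of an ergodic control problem with running cost affine in $\alpha$, which makes $H(x,\cdot)$ convex, while elliptic ergodic-HJB theory and the Donsker--Varadhan/G\"{a}rtner--Ellis analysis of additive functionals of the ergodic diffusion with generator $\mathcal{L}^1_x$-type operator $\mathcal{L}^2_x$ make $H(x,\cdot)$ finite, real-analytic, superlinear, and strictly convex with positive-definite Hessian (the last point using uniform nondegeneracy of $\sigma\sigma^{T}$). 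Then $L_2^o(x,\cdot)$ is the Legendre transform of $H(x,\cdot)$: superlinearity forces the supremum to be attained, and strict convexity with smoothness of $H$ make the maximizer $\alpha^{\ast}(x,\beta)$ unique, so $\partial_\beta L_2^o(x,\beta)=\{\alpha^{\ast}(x,\beta)\}$; hence $L_2^o(x,\cdot)$ is differentiable (indeed $C^1$) with $\nabla_\beta L_2^o(x,\beta)=(\nabla_\alpha H(x,\cdot))^{-1}(\beta)$. The main obstacle is exactly this last step: establishing the strong duality with the ergodic control/HJB problem rigorously --- the \cite{KurtzStockbridge} argument, including that the Bellman equation admits a classical solution so that $H$ genuinely is the ergodic constant --- and then proving strict convexity and nondegeneracy of the Hessian of $H(x,\cdot)$, which is what upgrades the (easy) convexity of $L_2^o(x,\cdot)$ to differentiability; finiteness and joint continuity are comparatively routine given the explicit feasible control above and the continuity in $x$ of invariant measures of nondegenerate diffusions on the torus.
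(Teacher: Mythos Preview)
Your overall architecture matches the paper's: finiteness via the explicit feedback control $v(y)=\sigma^{-1}(x,y)(\beta-\gamma b-c)$, joint lower semicontinuity via tightness of relaxed occupation measures and Fatou, joint upper semicontinuity by freezing a (near-)optimal control and perturbing $x$, and differentiability in $\beta$ via strict convexity of the Legendre dual $H(x,\cdot)$. The finiteness and continuity parts are correct and essentially the same as Lemmas~\ref{L:Convexity1} and~\ref{L:ContinuityLocalRateFunction}; your use of the locally-uniform-in-$x$ Lipschitz bound in $\beta$ to absorb the barycenter discrepancy $\beta_n'-\beta_n$ is a clean shortcut that replaces the paper's convex-combination argument at the end of Lemma~\ref{L:ContinuityLocalRateFunction}.

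The genuine difference is in how strict convexity of $H(x,\cdot)$ is obtained. The paper proves it by a direct perturbation computation (Lemma~\ref{L:DifferentiabilityMainLemma}): assuming $H$ is affine on a segment $[\alpha_1,\alpha_2]$, one perturbs the optimal control by $\eta\sigma^{-1}(\alpha_1-\alpha_2)h$, expands to first order in $\eta$ using the Poisson-equation Lemma~\ref{L:Differentiability1}, and arrives at a periodicity contradiction. You instead appeal to ``Donsker--Varadhan/G\"artner--Ellis analysis of additive functionals.'' This is the right circle of ideas but imprecise as stated: after the Cole--Hopf substitution $W=\gamma\log\varphi$ in your HJB, one finds that $H(x,\alpha)/\gamma$ is the principal eigenvalue of the twisted operator $e^{-\langle\alpha/\gamma,\,\cdot\,\rangle}\,\mathcal{L}^{2}_{0,x}\,e^{\langle\alpha/\gamma,\,\cdot\,\rangle}$ acting on periodic functions --- a Bloch eigenvalue of the uncontrolled periodic elliptic operator --- and not the eigenvalue of $\mathcal{L}+\langle\alpha,F\rangle$ for a fixed generator, since both drift and potential depend on $\alpha$ (the latter quadratically). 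Once this identification is made, real-analyticity follows from simplicity of the principal eigenvalue and analytic perturbation theory, and strict convexity from the classical fact that the Hessian of the Bloch eigenvalue of a uniformly elliptic periodic operator is positive definite (it equals the asymptotic covariance, equivalently the effective diffusivity, of the lifted process on $\mathbb{R}^{d}$). So your route works and in fact yields $C^\infty$ regularity of $L_2^o$ in $\beta$, but you must either cite or prove that Bloch-eigenvalue convexity result; the paper's self-contained perturbation argument is longer but requires no external input.

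One caution on dependencies: in your upper-semicontinuity step you invoke Theorem~\ref{T:BoundedOptimalControlRegime2}. In the paper, the continuity-in-$x$ half of that theorem (Lemma~\ref{L:ContinuousOptimalControlRegime2}) itself uses the differentiability in $\beta$ you are trying to prove. To avoid circularity you should cite only Lemma~\ref{L:BoundedOptimalControlRegime2} --- existence of a bounded, Lipschitz-in-$y$ optimizer at a \emph{fixed} $(x_0,\beta_0)$ --- which depends only on Lemmas~\ref{L:Convexity1} and~\ref{L:Convexity2} and is therefore available at that stage of your argument.
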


The proof of these theorems will be given in several steps. In Lemma
\ref{L:Convexity1} we prove that $L_{2}^{o}$ is convex in $\beta$ and finite.
One of the consequences of this lemma is that the subdifferential of
$L_{2}^{o}(x,\cdot)$ is non empty. This result is used by Lemma
\ref{L:Convexity2} where we rewrite $L_{2}^{o}$ in the spirit of a Lagrange
multiplier problem where the role of the Lagrange multiplier is played by an
element in the subdifferential of $L_{2}^{o}(x,\cdot)$. Then, using Lemma
\ref{L:Convexity2} we prove in Lemma \ref{L:BoundedOptimalControlRegime2} that
an optimal control exists which is bounded and Lipschitz continuous in $y$.
Lemma \ref{L:DifferentiabilityMainLemma} uses Lemmas \ref{L:Convexity1} and
\ref{L:BoundedOptimalControlRegime2} together with the technical Lemma
\ref{L:Differentiability1} to prove that the dual of $L_{2}^{o}(x,\beta)$ with
respect to $\beta$ is strictly convex, which implies that $L_{2}^{o}(x,\beta)$
is differentiable in $\beta$. In Lemma \ref{L:ContinuityLocalRateFunction} we
prove that $L_{2}^{o}(x,\beta)$ is continuous in $(x,\beta)\in\mathbb{R}^{d}$
using Lemmas \ref{L:Convexity1} and \ref{L:BoundedOptimalControlRegime2}.
Lastly, in Lemma \ref{L:ContinuousOptimalControlRegime2} we prove that the
control that is constructed in the proof of Lemma
\ref{L:BoundedOptimalControlRegime2} is continuous in $x$, which together with
uniqueness of the corresponding invariant measure imply that the latter is
weakly continuous in $x$. Theorem \ref{T:BoundedOptimalControlRegime2} follows
from Lemmas \ref{L:BoundedOptimalControlRegime2} and
\ref{L:ContinuousOptimalControlRegime2}. Theorem
\ref{T:LocalRateFunctionRegime2} follows from Lemmas \ref{L:Convexity1},
\ref{L:DifferentiabilityMainLemma} and \ref{L:ContinuityLocalRateFunction}.

For the reader's convenience we recall%
\begin{align*}
\mathcal{A}_{x,\beta}^{2,r}  &  =\left\{  \mathrm{P}\in\mathcal{P}%
(\mathcal{Z}\times\mathcal{Y}):\int_{\mathcal{Z}\times\mathcal{Y}}%
\mathcal{L}_{z,x}^{2}f(y)\mathrm{P}(dzdy)=0\text{ for all }f\in C^{2}%
(\mathcal{Y})\right. \\
&  \left.  \hspace{2cm}\int_{\mathcal{Z}\times\mathcal{Y}}\left\Vert
z\right\Vert ^{2}\mathrm{P}(dzdy)<\infty\text{ and }\beta=\int_{\mathcal{Z}%
\times\mathcal{Y}}\lambda_{2}(x,y,z)\mathrm{P}(dzdy)\right\}  .
\end{align*}

For notational convenience, we ignore for the moment the $x-$dependence since
this is seen as parameter by the local rate function. Sometimes, the analysis
works with the relaxed form of the local rate, but as noted previously
$L_{2}^{o}(\beta)=L_{2}^{r}(\beta)$.

\begin{lemma}
\label{L:Convexity1} The cost $L_{2}^{r}(\beta)$ is a finite and convex
function of $\beta$.
\end{lemma}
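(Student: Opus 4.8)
The plan is to establish finiteness by exhibiting, for each $\beta\in\mathbb{R}^{d}$, an explicit admissible measure in $\mathcal{A}_{x,\beta}^{2,r}$ of finite cost, and to establish convexity by mixing near-optimal admissible measures — an argument that works precisely because every functional appearing in the definition of $\mathcal{A}_{x,\beta}^{2,r}$ and in the cost is linear in $\mathrm{P}$.

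\emph{Finiteness.} Fix $\beta\in\mathbb{R}^{d}$. Since we are in Regime $2$ we have $\gamma\in(0,\infty)$, and by Condition \ref{A:Assumption1} the matrix $\sigma\sigma^{T}$ is uniformly nondegenerate, so $\sigma(x,y)$ is invertible with $\sigma^{-1}$ bounded; together with the boundedness of $b$ and $c$ this makes
\[
v(y)\doteq\sigma(x,y)^{-1}\bigl(\beta-\gamma b(x,y)-c(x,y)\bigr)
\]
a bounded, measurable function of $y\in\mathcal{Y}$ (with $x$ treated as a fixed parameter). With this choice $\lambda_{2}(x,y,v(y))=\gamma b(x,y)+c(x,y)+\sigma(x,y)v(y)\equiv\beta$, so that for $z=v(y)$ the operator $\mathcal{L}_{z,x}^{2}$ reduces to the uniformly elliptic, constant-drift operator $\beta\cdot\nabla_{y}+\tfrac{\gamma}{2}\sigma\sigma^{T}(x,y):\nabla_{y}\nabla_{y}$ on the torus. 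By the standard theory of nondegenerate diffusions on a compact manifold this operator admits a (unique) invariant probability measure $\mu$, i.e. $\int_{\mathcal{Y}}\bigl[\beta\cdot\nabla_{y}f(y)+\tfrac{\gamma}{2}\sigma\sigma^{T}(x,y):\nabla_{y}\nabla_{y}f(y)\bigr]\mu(dy)=0$ for all $f\in C^{2}(\mathcal{Y})$. Setting $\mathrm{P}(dzdy)=\delta_{v(y)}(dz)\,\mu(dy)$ one checks the three defining properties of $\mathcal{A}_{x,\beta}^{2,r}$ directly: the stationarity identity $\int\mathcal{L}_{z,x}^{2}f(y)\,\mathrm{P}(dzdy)=0$ holds because along $z=v(y)$ the operator $\mathcal{L}_{z,x}^{2}$ is exactly the one above; the barycenter condition $\int\lambda_{2}(x,y,z)\,\mathrm{P}(dzdy)=\beta$ holds because $\lambda_{2}(x,y,v(y))\equiv\beta$; and $\int\|z\|^{2}\,\mathrm{P}(dzdy)=\int_{\mathcal{Y}}\|v(y)\|^{2}\mu(dy)\leq\|v\|_{\infty}^{2}<\infty$. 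Hence $\mathcal{A}_{x,\beta}^{2,r}\neq\emptyset$ and $0\leq L_{2}^{r}(\beta)\leq\tfrac{1}{2}\|v\|_{\infty}^{2}<\infty$.

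\emph{Convexity.} Let $\beta_{0},\beta_{1}\in\mathbb{R}^{d}$, $\theta\in[0,1]$ and $\varepsilon>0$. By the finiteness just proved we may pick $\mathrm{P}_{i}\in\mathcal{A}_{x,\beta_{i}}^{2,r}$ with $\tfrac{1}{2}\int\|z\|^{2}\,\mathrm{P}_{i}(dzdy)\leq L_{2}^{r}(\beta_{i})+\varepsilon$, $i=0,1$. Set $\mathrm{P}_{\theta}=\theta\mathrm{P}_{0}+(1-\theta)\mathrm{P}_{1}$, again a probability measure on $\mathcal{Z}\times\mathcal{Y}$. The maps $\mathrm{P}\mapsto\int\mathcal{L}_{z,x}^{2}f(y)\,\mathrm{P}(dzdy)$, $\mathrm{P}\mapsto\int\lambda_{2}(x,y,z)\,\mathrm{P}(dzdy)$ and $\mathrm{P}\mapsto\int\|z\|^{2}\,\mathrm{P}(dzdy)$ are all linear in $\mathrm{P}$, so the first vanishes at $\mathrm{P}_{\theta}$, the second equals $\theta\beta_{0}+(1-\theta)\beta_{1}$, and the third is finite; thus $\mathrm{P}_{\theta}\in\mathcal{A}_{x,\theta\beta_{0}+(1-\theta)\beta_{1}}^{2,r}$. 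Consequently $L_{2}^{r}\bigl(\theta\beta_{0}+(1-\theta)\beta_{1}\bigr)\leq\tfrac{1}{2}\int\|z\|^{2}\,\mathrm{P}_{\theta}(dzdy)=\theta\bigl(\tfrac{1}{2}\int\|z\|^{2}\mathrm{P}_{0}(dzdy)\bigr)+(1-\theta)\bigl(\tfrac{1}{2}\int\|z\|^{2}\mathrm{P}_{1}(dzdy)\bigr)\leq\theta L_{2}^{r}(\beta_{0})+(1-\theta)L_{2}^{r}(\beta_{1})+\varepsilon$, and letting $\varepsilon\downarrow0$ gives convexity.

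\emph{Main obstacle.} The only substantive step is the nonemptiness of $\mathcal{A}_{x,\beta}^{2,r}$ needed for finiteness; this is exactly where the strict positivity of $\gamma$ in Regime $2$ — hence the genuine nondegeneracy of the frozen fast-variable diffusion — is used to produce the invariant measure $\mu$. Once finiteness is in hand, convexity is purely formal, resting only on the linearity in $\mathrm{P}$ of all the relevant functionals, which in turn reflects the affine dependence of $\mathcal{L}_{z,x}^{2}$ and $\lambda_{2}$ on $z$.
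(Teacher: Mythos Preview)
Your proof is correct and follows essentially the same approach as the paper: for finiteness you construct the same feedback control $v(y)=\sigma^{-1}(\beta-\gamma b-c)$ so that $\lambda_{2}\equiv\beta$ and the resulting uniformly elliptic operator on the torus has an invariant measure, and for convexity you mix (near-)optimal measures and exploit linearity of the constraints and cost in $\mathrm{P}$. The only cosmetic difference is that the paper phrases the convexity step by ``taking the infimum'' over admissible $\mathrm{P}_{1},\mathrm{P}_{2}$ rather than using explicit $\varepsilon$-near-minimizers.
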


\noindent\textit{Proof.} Given $\beta$ let $v_{\beta}(y)=\sigma^{-1}%
(y)(\beta-\gamma b(y)-c(y))$. Then $v_{\beta}(y)$ is Lipschitz continuous, and
hence there is an associated unique invariant distribution $\mu_{\beta}(dy)$.
Letting $\mathrm{P}(dzdy)=\delta_{v_{\beta}(y)}(dz)\mu_{\beta}(dy)$, we have
\[
\int_{\mathcal{Z}\times\mathcal{Y}}(\gamma b(y)+c(y)+\sigma(y)z)\mathrm{P}%
(dzdy)=\int_{\mathcal{Y}}\beta\mu_{\beta}(dy)=\beta,
\]
and similarly the first condition for inclusion in $\mathcal{A}_{\beta}^{2,r}$
can be checked. Since $v_{\beta}(y)$ is bounded the associated cost is finite,
and so $L_{2}^{r}(\beta)<\infty$.

Next let $\beta_{1},\beta_{2}\in\mathbb{R}^{d}$ and denote by $\mathrm{P}%
_{1},\mathrm{P}_{2}$ corresponding controls such that $\int_{\mathcal{Z}%
\times\mathcal{Y}}\lambda(y,z)\mathrm{P}_{i}(dzdy)=\beta_{i}$. Consider a
parameter $\eta\in\lbrack0,1]$ and define $\mathrm{P}_{0}=\eta\mathrm{P}%
_{1}+(1-\eta)\mathrm{P}_{2}$. Due to the linearity of integration,
$\mathrm{P}_{0}\in A_{\eta\beta_{1}+(1-\eta)\beta_{2}}^{2,r}$, and therefore
\begin{align*}
L_{2}^{r}(\eta\beta_{1}+(1-\eta)\beta_{2})  &  \leq\int_{\mathcal{Z}%
\times\mathcal{Y}}\frac{1}{2}\left\Vert z\right\Vert ^{2}\mathrm{P}%
_{0}(dzdy)\\
&  =\eta\int_{\mathcal{Z}\times\mathcal{Y}}\frac{1}{2}\left\Vert z\right\Vert
^{2}\mathrm{P}_{1}(dzdy)+(1-\eta)\int_{\mathcal{Z}\times\mathcal{Y}}\frac
{1}{2}\left\Vert z\right\Vert ^{2}\mathrm{P}_{2}(dzdy).
\end{align*}
Taking the infimum over all admissible $\mathrm{P}_{1},\mathrm{P}_{2}$ we get
\[
L_{2}^{r}(\eta\beta_{1}+(1-\eta)\beta_{2})\leq\eta L_{2}^{r}(\beta
_{1})+(1-\eta)L_{2}^{r}(\beta_{2}).
\]
This proves the convexity, and completes the proof of the lemma.\hfill
$\square\medskip$

For any $\beta\in\mathbb{R}^{d}$ the subdifferential of $L_{2}^{r}$ at $\beta$
is defined by%
\[
\partial L_{2}^{r}(\beta)=\{\zeta\in\mathbb{R}^{d}:L_{2}^{r}(\beta^{\prime
})-L_{2}^{r}(\beta)\geq\zeta\cdot(\beta^{\prime}-\beta)\hspace{0.2cm}\text{for
all }\beta^{\prime}\in\mathbb{R}^{d}\}.
\]
Since $L_{2}^{r}$ is finite and convex $\partial L_{2}^{r}(\beta)$ is always
nonempty. Define
\begin{align}
\mathcal{B}^{2,r}  &  =\left\{  \mathrm{P}\in\mathcal{P}(\mathcal{Z}%
\times\mathcal{Y}):\int_{\mathcal{Z}\times\mathcal{Y}}\left\Vert z\right\Vert
^{2}\mathrm{P}(dzdy)<\infty, \int_{\mathcal{Z}\times\mathcal{Y}}%
\mathcal{L}_{z}^{2}f(y)\mathrm{P}(dzdy)=0\text{ for all }f\in C^{2}%
(\mathcal{Y})\right\} \nonumber\\
\mathcal{B}^{2,o}  &  =\left\{  v(\cdot):\mathcal{Y}\mapsto\mathbb{R}^{d}%
,\mu\in\mathcal{P}(\mathcal{Y}): \int_{\mathcal{Y}}\left\Vert v(y)\right\Vert
^{2}\mu(dy)<\infty, \int_{\mathcal{Y}}\mathcal{L}_{v(y)}^{2}f(y)\mu(dy)=0
\text{ for all }f\in C^{2}(\mathcal{Y})\right\} \nonumber
\end{align}
and for $\zeta\in\mathbb{R}^{d}$ let
\begin{equation*}
\tilde{L}_{2}^{r}(\zeta)=\inf_{\mathrm{P}\in\mathcal{B}^{2,r}}\int
_{\mathcal{Z}\times\mathcal{Y}}\left(  \frac{1}{2}\left\Vert z\right\Vert
^{2}-\zeta\cdot(\gamma b(y)+c(y)+\sigma(y)z)\right)  \mathrm{P}(dzdy).
\label{Def:ErgodicControlConstraints2_1}%
\end{equation*}

We have the following lemma.

\begin{lemma}
\label{L:Convexity2} Consider any $\beta\in\mathbb{R}^{d}$ and any
$\zeta_{\beta}\in\partial L^{r}_{2}(\beta)$. Then
\[
\tilde{L}^{r}_{2}(\zeta_{\beta})=L^{r}_{2}(\beta)-\zeta_{\beta}\cdot\beta.
\]

\end{lemma}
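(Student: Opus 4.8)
The plan is to prove the identity as a pair of matching inequalities, using only the definitions of $L_2^r$, $\tilde L_2^r$, and the subdifferential, together with the fact that $\lambda_2(y,z)=\gamma b(y)+c(y)+\sigma(y)z$ is affine in $z$ with coefficients bounded in $y$. The structural point that drives everything is that the two constraint sets differ only by the linear marginal constraint: $\mathcal{A}_\beta^{2,r}\subseteq\mathcal{B}^{2,r}$ for every $\beta$, and conversely every $\mathrm{P}\in\mathcal{B}^{2,r}$ belongs to $\mathcal{A}_{\beta'}^{2,r}$ where $\beta'\doteq\int_{\mathcal{Z}\times\mathcal{Y}}\lambda_2(y,z)\mathrm{P}(dzdy)$; this $\beta'$ is a well-defined finite vector because $\|\lambda_2(y,z)\|\le C(1+\|z\|)$ and $\mathrm{P}$ has a finite second moment in $z$, and in particular $\mathcal{A}_{\beta'}^{2,r}\ne\emptyset$ since it contains $\mathrm{P}$ itself.

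For the inequality $\tilde L_2^r(\zeta_\beta)\le L_2^r(\beta)-\zeta_\beta\cdot\beta$, I would take an arbitrary $\mathrm{P}\in\mathcal{A}_\beta^{2,r}$, observe that $\mathrm{P}\in\mathcal{B}^{2,r}$, and use $\int_{\mathcal{Z}\times\mathcal{Y}}\lambda_2(y,z)\mathrm{P}(dzdy)=\beta$ to rewrite the objective of $\tilde L_2^r$ as $\int_{\mathcal{Z}\times\mathcal{Y}}\frac12\|z\|^2\mathrm{P}(dzdy)-\zeta_\beta\cdot\beta$. Bounding the left-hand side by the infimum over the larger set $\mathcal{B}^{2,r}$ and then taking the infimum over $\mathrm{P}\in\mathcal{A}_\beta^{2,r}$ on the right gives the claim.

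For the reverse inequality, I would take an arbitrary $\mathrm{P}\in\mathcal{B}^{2,r}$, set $\beta'$ as above, and note $\mathrm{P}\in\mathcal{A}_{\beta'}^{2,r}$, so that $\int_{\mathcal{Z}\times\mathcal{Y}}\frac12\|z\|^2\mathrm{P}(dzdy)\ge L_2^r(\beta')$. Hence the objective of $\tilde L_2^r$ at this $\mathrm{P}$ equals $\int_{\mathcal{Z}\times\mathcal{Y}}\frac12\|z\|^2\mathrm{P}(dzdy)-\zeta_\beta\cdot\beta'\ge L_2^r(\beta')-\zeta_\beta\cdot\beta'$. The defining property of $\zeta_\beta\in\partial L_2^r(\beta)$, namely $L_2^r(\beta')-L_2^r(\beta)\ge\zeta_\beta\cdot(\beta'-\beta)$, rearranges to $L_2^r(\beta')-\zeta_\beta\cdot\beta'\ge L_2^r(\beta)-\zeta_\beta\cdot\beta$; chaining this with the previous display and taking the infimum over $\mathrm{P}\in\mathcal{B}^{2,r}$ yields $\tilde L_2^r(\zeta_\beta)\ge L_2^r(\beta)-\zeta_\beta\cdot\beta$, which combined with the first inequality completes the proof.

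The only things needing a moment's care are soft integrability remarks: by Lemma \ref{L:Convexity1} the function $L_2^r$ is finite (and nonnegative), so $\partial L_2^r(\beta)$ is nonempty and $L_2^r(\beta)-\zeta_\beta\cdot\beta$ is a genuine finite number; and for any $\mathrm{P}\in\mathcal{B}^{2,r}$ the linear growth bound on $\lambda_2$ together with $\int_{\mathcal{Z}\times\mathcal{Y}}\|z\|^2\mathrm{P}(dzdy)<\infty$ and Cauchy--Schwarz give $\int_{\mathcal{Z}\times\mathcal{Y}}\|\lambda_2(y,z)\|\,\mathrm{P}(dzdy)<\infty$, so the objective defining $\tilde L_2^r$ is never of the indeterminate form $\infty-\infty$. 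I expect the one genuinely substantive step to be the bookkeeping identification $\mathrm{P}\in\mathcal{B}^{2,r}\Rightarrow\mathrm{P}\in\mathcal{A}_{\beta'}^{2,r}$ with $\beta'$ the finite value of $\int\lambda_2\,d\mathrm{P}$; everything else is a routine convex-duality computation.
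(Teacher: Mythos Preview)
Your proof is correct and follows essentially the same two-inequality argument as the paper: both use the inclusion $\mathcal{A}_\beta^{2,r}\subseteq\mathcal{B}^{2,r}$ for one direction and the subgradient inequality combined with the identification $\mathcal{B}^{2,r}=\bigcup_{\beta'}\mathcal{A}_{\beta'}^{2,r}$ for the other. Your version is slightly more explicit about why $\beta'=\int\lambda_2\,d\mathrm{P}$ is finite, which the paper leaves implicit.
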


\noindent\textit{Proof.} First we prove that $\tilde{L}_{2}^{r}(\zeta_{\beta
})\leq L_{2}^{r}(\beta)-\zeta_{\beta}\cdot\beta$, which follows from%
\begin{align*}
L_{2}^{r}(\beta)-\zeta_{\beta}\cdot\beta &  =\inf_{\mathrm{P}\in
\mathcal{A}_{\beta}^{2,r}}\int_{\mathcal{Z}\times\mathcal{Y}}\frac{1}%
{2}\left\Vert z\right\Vert ^{2}\mathrm{P}(dzdy)-\zeta_{\beta}\cdot\beta\\
&  =\inf_{\mathrm{P}\in\mathcal{A}_{\beta}^{2,r}}\int_{\mathcal{Z}%
\times\mathcal{Y}}\left(  \frac{1}{2}\left\Vert z\right\Vert ^{2}-\zeta
_{\beta}\cdot(\gamma b(y)+c(y)+\sigma(y)z)\right)  \mathrm{P}(dzdy)\\
&  \geq\inf_{\mathrm{P}\in\mathcal{B}^{2,r}}\int_{\mathcal{Z}\times
\mathcal{Y}}\left(  \frac{1}{2}\left\Vert z\right\Vert ^{2}-\zeta_{\beta}%
\cdot(\gamma b(y)+c(y)+\sigma(y)z)\right)  \mathrm{P}(dzdy)\\
&  =\tilde{L}_{2}^{r}(\zeta_{\beta}).
\end{align*}

For the opposite direction we use that $\zeta_{\beta}\in\partial L_{2}%
^{r}(\beta)$. Consider any $\beta^{\prime}\in\mathbb{R}^{d}$ and any
$\mathrm{P}\in A_{\beta^{\prime}}^{2,r}$. Then%
\begin{align}
L_{2}^{r}(\beta)-\zeta_{\beta} \cdot\beta &  \leq L_{2}^{r}(\beta^{\prime
})-\zeta_{\beta}\cdot\beta^{\prime}\nonumber\\
&  \leq\int_{\mathcal{Z}\times\mathcal{Y}}\frac{1}{2}\left\Vert z\right\Vert
^{2}\mathrm{P}(dzdy)-\zeta_{\beta}\cdot\beta^{\prime}\nonumber\\
&  =\int_{\mathcal{Z}\times\mathcal{Y}}\left(  \frac{1}{2}\left\Vert
z\right\Vert ^{2}-\zeta_{\beta}\cdot(\gamma b(y)+c(y)+\sigma(y)z)\right)
\mathrm{P}(dzdy).\nonumber
\end{align}
Since $\mathcal{B}^{2,r}=\cup_{\beta^{\prime}\in\mathbb{R}^{d}}A_{\beta
^{\prime}}^{2,r}$, the last display implies%
\[
L_{2}^{r}(\beta)-\zeta_{\beta}\cdot\beta\leq\inf_{\mathrm{P}\in\mathcal{B}%
^{2,r}}\int_{\mathcal{Z}\times\mathcal{Y}}\left(  \frac{1}{2}\left\Vert
z\right\Vert ^{2}-\zeta_{\beta} \cdot(\gamma b(y)+c(y)+\sigma(y)z)\right)
\mathrm{P}(dzdy)=\tilde{L}_{2}^{r}(\zeta_{\beta}).
\]
This concludes the proof of the lemma.\hfill$\square$

\begin{lemma}
\label{L:BoundedOptimalControlRegime2} Assume Condition \ref{A:Assumption1}.
Then there is a pair $(\bar{u},\bar{\mu})$ that achieves the infimum in the
definition of the local rate function such that $\bar{u}=\bar{u}_{\beta}(x,y)$
is, for any fixed $(x,\beta)\in\mathbb{R}^{2d}$, bounded and Lipschitz
continuous in $y$. Also, $\bar{\mu}(dy)=\bar{\mu}_{\bar{u}}(dy|x)$ is the
unique invariant measure corresponding to the operator $\mathcal{L}^{2}%
_{\bar{u}_{\beta}(x,y),x}$.
\end{lemma}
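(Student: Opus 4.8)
The plan is to hold $x$ fixed as a parameter, realize the local rate function $L_2^o(x,\beta)$ (which with $x$ frozen equals $L_2^r(\beta)$, as noted in the text) as the value of an ergodic control problem obtained by dualizing the flux constraint $\beta=\int\lambda_2(x,y,z)\mathrm{P}(dzdy)$, and then read off the optimal feedback from the associated ergodic Bellman equation, whose classical solvability is furnished by the uniform ellipticity of $\mathcal{L}_{z,x}^{2}$ (recall $\gamma>0$ in Regime 2 and $\sigma\sigma^{T}$ is uniformly nondegenerate) together with the linear-programming theory of \cite{KurtzStockbridge}. By Lemma \ref{L:Convexity1} the function $\beta\mapsto L_2^r(\beta)$ is finite and convex on $\mathbb{R}^{d}$, so $\partial L_2^r(\beta)\neq\emptyset$; fix $\zeta_{\beta}\in\partial L_2^r(\beta)$. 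Lemma \ref{L:Convexity2} supplies the tightness of the Lagrangian relaxation, $\tilde{L}_2^r(\zeta_{\beta})=L_2^r(\beta)-\zeta_{\beta}\cdot\beta$, and I would identify $\tilde{L}_2^r(\zeta_{\beta})$ with the average cost of the ergodic control problem on $\mathcal{Y}=\mathbb{T}^{d}$ with controlled generator $\mathcal{L}_{z,x}^{2}$ and running cost $k(y,z)=\tfrac{1}{2}\|z\|^{2}-\zeta_{\beta}\cdot(\gamma b(x,y)+c(x,y)+\sigma(x,y)z)$.

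Because $b,c,\sigma$ are smooth and bounded by Condition \ref{A:Assumption1}, $k(y,\cdot)$ is strictly convex with quadratic growth, and $\mathcal{L}_{z,x}^{2}$ is uniformly elliptic, the ergodic Bellman equation
\[
\rho=\min_{z\in\mathcal{Z}}\bigl[\mathcal{L}_{z,x}^{2}w(y)+k(y,z)\bigr],\qquad y\in\mathcal{Y},
\]
should admit a solution $(\rho,w)$ with $w\in C^{2}(\mathcal{Y})$ and $\rho=\tilde{L}_2^r(\zeta_{\beta})$; here I would invoke \cite{KurtzStockbridge} for the linear-programming representation identifying $\rho$ with $\tilde{L}_2^r(\zeta_{\beta})$, and standard elliptic regularity for the resulting semilinear uniformly elliptic PDE, which after the $z$-minimization carries a quadratic gradient nonlinearity. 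Carrying out that minimization explicitly gives the candidate feedback $\bar{u}_{\beta}(x,y)=\sigma^{T}(x,y)\bigl(\zeta_{\beta}-\nabla_{y}w(y)\bigr)$. Regularity is then immediate: $w\in C^{2}$ on the compact torus makes $\nabla_{y}w$ bounded and Lipschitz, and $\sigma$ is bounded and Lipschitz, so $\bar{u}_{\beta}(x,\cdot)$ is bounded and Lipschitz in $y$ (measurability in $(x,y,\beta)$ follows once a measurable selection of $\zeta_{\beta}\in\partial L_2^r(\beta)$ and of $w$ is fixed). Since $\bar{u}_{\beta}(x,\cdot)$ is bounded and Lipschitz and $\gamma\sigma\sigma^{T}$ is uniformly nondegenerate, the diffusion with generator $\mathcal{L}_{\bar{u}_{\beta}(x,y),x}^{2}$ on $\mathcal{Y}$ has a unique invariant probability measure $\bar{\mu}(dy)=\bar{\mu}_{\bar{u}}(dy|x)$, characterized by $\int_{\mathcal{Y}}\mathcal{L}_{\bar{u}_{\beta}(x,y),x}^{2}f(y)\,\bar{\mu}(dy)=0$ for all $f\in C^{2}(\mathcal{Y})$.

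It remains to verify $(\bar{u}_{\beta}(x,\cdot),\bar{\mu})\in\mathcal{A}_{x,\beta}^{2,o}$ and that its cost equals $L_2^o(x,\beta)$. Integrating the Bellman identity against $\bar{\mu}$ and using $w\in C^{2}$ with $\int\mathcal{L}_{\bar{u}_{\beta}(x,y),x}^{2}w\,\bar{\mu}=0$ gives $\rho=\tfrac{1}{2}\int\|\bar{u}_{\beta}\|^{2}\bar{\mu}-\zeta_{\beta}\cdot\bar{\beta}$ with $\bar{\beta}:=\int\lambda_2(x,y,\bar{u}_{\beta}(x,y))\,\bar{\mu}(dy)$, while integrating the Bellman \emph{inequality} against the invariant measure of any competing $(v,\mu)\in\mathcal{A}_{x,\beta}^{2,o}$ gives $\tfrac{1}{2}\int\|v\|^{2}\mu\geq\rho+\zeta_{\beta}\cdot\beta=L_2^r(\beta)$, so $L_2^o(x,\beta)\geq L_2^r(\beta)$. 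The delicate point, which I expect to be the main obstacle, will be forcing $\bar{\beta}=\beta$ rather than merely some flux sharing the same subgradient $\zeta_{\beta}$; I would obtain this by applying the linear-programming duality of \cite{KurtzStockbridge} directly to the \emph{constrained} minimization defining $L_2^o(x,\beta)$, so that $\beta=\int\lambda_2(x,y,z)\mathrm{P}(dzdy)$ is a primal constraint and $\zeta_{\beta}$ emerges as its dual multiplier; complementary slackness then identifies the primal-optimal stationary distribution as $\delta_{\bar{u}_{\beta}(x,y)}(dz)\bar{\mu}(dy)$, which yields both $\bar{\beta}=\beta$ and $\tfrac{1}{2}\int\|\bar{u}_{\beta}\|^{2}\bar{\mu}=L_2^o(x,\beta)$, completing the argument. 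The only other step requiring genuine work is the existence of a classical $C^{2}$ solution to the ergodic Bellman equation on the torus and the identification of its additive eigenvalue with $\tilde{L}_2^r(\zeta_{\beta})$; everything else is bookkeeping with the convexity already established in Lemmas \ref{L:Convexity1} and \ref{L:Convexity2}.
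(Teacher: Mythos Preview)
Your approach coincides with the paper's: dualize the flux constraint via a subgradient $\zeta_\beta\in\partial L_2^r(\beta)$ (Lemmas \ref{L:Convexity1}--\ref{L:Convexity2}), invoke \cite{KurtzStockbridge} to recast $\tilde L_2^r(\zeta_\beta)$ as the value of an ergodic control problem on the torus, solve the associated Bellman equation \eqref{Eq:ErgodicControlProblemRegime2_1} classically (the paper appeals to the vanishing discount method of \cite{ArisawaLions,ArapostathisBorkarGhosh} rather than invoking elliptic regularity directly, but the conclusion is the same), and read off the feedback $\bar u_\beta(x,y)=-\sigma(x,y)^T(\nabla_y W_\beta(y)-\zeta_\beta)$, from which boundedness, Lipschitz continuity in $y$, and uniqueness of the invariant measure follow exactly as you describe.

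The one place you go beyond the paper is in flagging whether the Bellman-optimal pair actually produces the prescribed flux, i.e.\ whether $\bar\beta:=\int_{\mathcal Y}\lambda_2(x,y,\bar u_\beta(x,y))\,\bar\mu(dy)$ equals $\beta$. The paper's proof does not isolate this point; it simply asserts via the verification theorem that $\bar u_\beta$ is ``an optimal control'' and moves on. Your caution is therefore well placed. Note, however, that the chain of equalities forced by $\tilde L_2^r(\zeta_\beta)=\tfrac12\int\|\bar u_\beta\|^2\bar\mu-\zeta_\beta\cdot\bar\beta\ge L_2^r(\bar\beta)-\zeta_\beta\cdot\bar\beta\ge L_2^r(\beta)-\zeta_\beta\cdot\beta=\tilde L_2^r(\zeta_\beta)$ already shows $(\bar u_\beta,\bar\mu)$ is optimal for $L_2^o(x,\bar\beta)$ with $\zeta_\beta\in\partial L_2^r(\bar\beta)$; what remains is the injectivity of $\beta\mapsto\partial L_2^r(\beta)$, which is lighter than the full constrained LP duality you propose and which the paper effectively leaves implicit in the surrounding development.
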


\begin{proof}
By Lemma \ref{L:Convexity2} we get that for any $\beta\in\mathbb{R}^{d}$ and
$\zeta_{\beta}\in\partial L_{2}^{r}(\beta)$,%
\begin{equation*}
L_{2}^{r}(\beta)=\tilde{L}_{2}^{r}(\zeta_{\beta})+\zeta_{\beta}\cdot\beta
=\inf_{\mathrm{P}\in\mathcal{B}^{2,r}}\int_{\mathcal{Z}\times\mathcal{Y}%
}\left(  \frac{1}{2}\left\Vert z\right\Vert ^{2}-\zeta_{\beta}\cdot(\gamma
b(y)+c(y)+\sigma(y)z-\beta)\right)  \mathrm{P}(dzdy).
\label{Def:OptimizationProblem3}%
\end{equation*}
According to Theorem 6.1 in \cite{KurtzStockbridge}, this optimization also
has a representation via an ergodic control problem of the form%
\begin{equation*}
\tilde{L}_{2}^{r}(\zeta_{\beta})+\zeta_{\beta}\cdot\beta=\inf\limsup
_{T\rightarrow\infty}\frac{1}{T}\mathbb{E}\int_{0}^{T}\left(  \frac{1}%
{2}\left\Vert v_{s}\right\Vert ^{2}-\zeta_{\beta}\cdot(\gamma b(Y_{s}%
)+c(Y_{s})+\sigma(Y_{s})v_{s}-\beta)\right)  ds,
\label{Def:OptimizationProblem3_1}%
\end{equation*}
where the infimum is over all progressively measurable controls $v$ and
solutions to the controlled martingale problem associated with $\mathcal{L}%
_{z}^{2}$. [The paper \cite{KurtzStockbridge} works with relaxed controls, but
since here the dynamics are affine in the control and the cost is convex, the
infima over relaxed and ordinary controls are the same.]

The Bellman equation associated with this control problem is
\begin{equation}
\inf_{v}\left[  \mathcal{L}_{v}^{2}W(y)+\frac{1}{2}\left\Vert v\right\Vert
^{2}-\zeta_{\beta}\cdot(\gamma b(y)+c(y)+\sigma(y)v-\beta)\right]  =\rho.
\label{Eq:ErgodicControlProblemRegime2_1}%
\end{equation}
Using the standard vanishing discount approach and taking into account the
periodicity condition (see for example \cite{ArisawaLions,
ArapostathisBorkarGhosh}) one can show that there is a unique pair
$(W,\rho)\in\mathcal{C}^{2}(\mathbb{R}^{d})\times\mathbb{R}$, such that
$W(0)=0$ and $W(y)$ is periodic in $y$ with period $1$ that satisfies
(\ref{Eq:ErgodicControlProblemRegime2_1}). Since we have a classical sense
solution, by the verification theorem for ergodic control $\rho=\rho
(\beta)=L_{2}^{r}(\beta)=\tilde{L}_{2}^{r}(\zeta_{\beta})+\zeta_{\beta}%
\cdot\beta$. In order to emphasize the dependence of $W(\cdot)$ on $\beta$ we
write $W(y)=W_{\beta}(y)$. It also follows from the verification argument that
an optimal control $\bar{u}_{\beta}(y)$ is given by $\bar{u}_{\beta
}(y)=-\sigma(y)^{T}(\nabla_{y}W_{\beta}(y)-\zeta_{\beta})$. Compactness of the
state space and the assumptions on the coefficients guarantee that the
gradient of $W_{\beta}(\cdot)$ is bounded, i.e., $\left\Vert \nabla
_{y}W_{\beta}\right\Vert \leq K(\beta)$ for some constant $K(\beta)$ that may
depend on $\beta$. Therefore, such an optimal control is indeed bounded and
Lipschitz continuous in $y$. Existence and uniqueness of the invariant measure
follows from the latter and the non-degeneracy assumption.
\end{proof}

Next, we prove that the local rate function $L_{2}^{o}(\beta)$ is actually
differentiable in $\beta\in\mathbb{R}^{d}$. Recall the operator
\[
\mathcal{L}_{u(y)}^{2}=\left[  \gamma b(y)+c(y)+\sigma(y)u(y)\right]
\cdot\nabla_{y}+\gamma\frac{1}{2}\sigma(y)\sigma(y)^{T}:\nabla_{y}\nabla_{y}.
\]
For notational convenience we omit the superscript $2$ and write
$\mathcal{L}_{u(y)}$ in place of $\mathcal{L}_{u(y)}^{2}$. Recall also that
for a bounded and Lipschitz continuous control $\bar{u}$ there exists a unique
invariant measure $\mu(dy)$ corresponding to $\mathcal{L}_{\bar{u}(y)}$.

Define the set of functions
\[
\mathcal{H}\doteq\left\{  h:\mathcal{Y}\mapsto\mathbb{R}\text{ such that
}h\text{ is periodic, bounded, Lipschitz continuous and }\int_{\mathcal{Y}%
}h(y)\mu(dy)=1\right\}  .
\]
For a vector $\theta\in\mathbb{R}^{d}$, $\eta\in\mathbb{R}$ and $h\in
\mathcal{H}$ define the perturbed control
\begin{equation}
\bar{u}_{\eta}(y)\doteq\bar{u}(y)+\eta\sigma(y)^{-1}\theta h(y).
\label{Eq:DifferentiabilityAuxiliaryControl}%
\end{equation}
For each $\eta$ there is a unique invariant measure $\mu_{\eta}(dy)$
corresponding to $\mathcal{L}_{\eta}=\mathcal{L}_{\bar{u}_{\eta}(y)}$, and it
is straightforward to show that $\mu_{\eta}(dy)\rightarrow\mu(dy)$ in the weak
topology as $|\eta|\downarrow0$. Moreover, under Condition \ref{A:Assumption1}%
, Lemma 3.2 in \cite{BuckdahnHuPeng} guarantees that the invariant measures
$\mu_{\eta}(dy)$ and $\mu(dy)$ have densities $m_{\eta}(y)$ and $m(y)$
respectively. In particular, there exist unique weak sense solutions to the
equations
\begin{equation*}
\mathcal{L}_{\eta}^{\ast}m_{\eta}(y)=0,\hspace{0.1cm}\int_{\mathcal{Y}}%
m_{\eta}(y)dy=1\text{ and }\mathcal{L}_{\bar{u}}^{\ast}m(y)=0,\hspace
{0.1cm}\int_{\mathcal{Y}}m(y)dy=1
\end{equation*}
where $\mathcal{L}_{\eta}^{\ast}$ and $\mathcal{L}_{\bar{u}}^{\ast}$ are the
formal adjoint operators to $\mathcal{L}_{\eta}$ and $\mathcal{L}_{\bar{u}}$
respectively. The densities are strictly positive, continuous and in
$H^{1}(\mathcal{Y})$. Observe that
\begin{equation}
\mathcal{L}_{\eta}^{\ast}m_{\eta}(y)=0\Leftrightarrow\mathcal{L}_{\bar{u}%
}^{\ast}m_{\eta}(y)=\eta\theta\cdot\nabla\left(  h(y)m_{\eta}(y)\right)  .
\label{Eq:DifferentiabilityPerturbedOperator1}%
\end{equation}
in the weak sense.

Next, for $g\in L^{2}(\mathcal{Y})$ consider the auxiliary partial
differential equation
\begin{equation}
\mathcal{L}_{\bar{u}}f(y)=g(y)-\int_{\mathcal{Y}}g(y)\mu(dy),\hspace
{0.1cm}f\text{ is 1 periodic and }\int_{\mathcal{Y}}f(y)\mu(dy)=0.
\label{Eq:DifferentiabilityAuxiliaryPDE}%
\end{equation}
By the Fredholm alternative and the strong maximum principle this equation has
a unique solution. Standard elliptic regularity theory yields $f\in
H^{2}(\mathbb{R}^{d})$. Then by Sobolev's embedding lemma we have that $f\in
C^{1}(\mathbb{R}^{d})$.

Denote by $(\cdot,\cdot)_{2}$ the usual inner product in $L^{2}(\mathcal{Y})$.
The following lemma will be useful in the sequel.

\begin{lemma}
\label{L:Differentiability1} Let $g\in L^{2}(\mathcal{Y})$, $\eta\in
\mathbb{R}$, $h\in\mathcal{H}$ and $f\in H^{2}(\mathbb{R}^{d})$ the solution
to (\ref{Eq:DifferentiabilityAuxiliaryPDE}). Then,
\begin{equation*}
\left(  g, (m_{\eta}-m)\right)  _{2}= -\eta\left(  \theta\cdot\nabla f,
hm\right)  _{2}-\eta\left(  \theta\cdot\nabla f, h( m_{\eta}-m)\right)  _{2}%
\end{equation*}

\end{lemma}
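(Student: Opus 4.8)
\emph{Proof proposal.} The plan is to reduce the identity to the defining (weak) equations for the two invariant densities together with the auxiliary PDE (\ref{Eq:DifferentiabilityAuxiliaryPDE}). Set $\bar g\doteq\int_{\mathcal{Y}}g(y)\mu(dy)=(g,m)_{2}$ and recall that $f$ solves $\mathcal{L}_{\bar u}f=g-\bar g$ with $f$ one-periodic. Since $\int_{\mathcal{Y}}m_{\eta}(y)\,dy=\int_{\mathcal{Y}}m(y)\,dy=1$, we have $(1,m_{\eta}-m)_{2}=0$, and hence
\[
(g,m_{\eta}-m)_{2}=(g-\bar g,\,m_{\eta}-m)_{2}=(\mathcal{L}_{\bar u}f,\,m_{\eta}-m)_{2}.
\]
Thus it suffices to evaluate $(\mathcal{L}_{\bar u}f,m_{\eta})_{2}$ and $(\mathcal{L}_{\bar u}f,m)_{2}$ separately.

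For the second term, the invariance $\mathcal{L}_{\bar u}^{\ast}m=0$ gives $(\mathcal{L}_{\bar u}f,m)_{2}=0$. For the first term I would use that, by (\ref{Eq:DifferentiabilityAuxiliaryControl}), the two controls differ by $\eta\sigma(y)^{-1}\theta h(y)$, and since $\sigma(y)$ multiplies the control in the first-order coefficient of the generator,
\[
\mathcal{L}_{\eta}=\mathcal{L}_{\bar u}+\eta\,h(y)\,\theta\cdot\nabla_{y},
\]
so $\mathcal{L}_{\bar u}f=\mathcal{L}_{\eta}f-\eta\,h\,\theta\cdot\nabla f$. Pairing with $m_{\eta}$ and using that $m_{\eta}$ is the invariant density of $\mathcal{L}_{\eta}$ (equivalently (\ref{Eq:DifferentiabilityPerturbedOperator1})), so that $(\mathcal{L}_{\eta}f,m_{\eta})_{2}=0$, yields $(\mathcal{L}_{\bar u}f,m_{\eta})_{2}=-\eta\,(h\,\theta\cdot\nabla f,m_{\eta})_{2}$. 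Combining the two computations,
\[
(g,m_{\eta}-m)_{2}=-\eta\,(h\,\theta\cdot\nabla f,m_{\eta})_{2}=-\eta\,(\theta\cdot\nabla f,hm)_{2}-\eta\,(\theta\cdot\nabla f,h(m_{\eta}-m))_{2},
\]
where in the last step one only uses $(h\,\theta\cdot\nabla f,m_{\eta})_{2}=(\theta\cdot\nabla f,hm_{\eta})_{2}$ and $hm_{\eta}=hm+h(m_{\eta}-m)$. This is exactly the claimed identity. (Alternatively, the same equality $(\mathcal{L}_{\bar u}f,m_{\eta}-m)_{2}=-\eta(\theta\cdot\nabla f,hm_{\eta})_{2}$ can be obtained directly by testing the weak identity (\ref{Eq:DifferentiabilityPerturbedOperator1}) against $f$ and integrating by parts on the torus.)

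The only point requiring care --- and the one I would write out in full --- is the justification that $f$ is an admissible test function in the invariance identities, which are a priori formulated for smooth periodic test functions. By the elliptic regularity recorded after (\ref{Eq:DifferentiabilityAuxiliaryPDE}) we have $f\in H^{2}(\mathcal{Y})\cap C^{1}(\mathcal{Y})$, so $\mathcal{L}_{\bar u}f$ and $\mathcal{L}_{\eta}f$ lie in $L^{2}(\mathcal{Y})$, while $m,m_{\eta}\in H^{1}(\mathcal{Y})\subset L^{2}(\mathcal{Y})$. Choosing smooth periodic $f_{k}\to f$ in $H^{2}(\mathcal{Y})$ we have $\mathcal{L}_{\bar u}f_{k}\to\mathcal{L}_{\bar u}f$ and $\mathcal{L}_{\eta}f_{k}\to\mathcal{L}_{\eta}f$ in $L^{2}(\mathcal{Y})$; since $(\mathcal{L}_{\eta}f_{k},m_{\eta})_{2}=0$ and $(\mathcal{L}_{\bar u}f_{k},m)_{2}=0$ for each $k$, passing to the limit gives $(\mathcal{L}_{\eta}f,m_{\eta})_{2}=0$ and $(\mathcal{L}_{\bar u}f,m)_{2}=0$. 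All remaining integrals are finite because $\theta\cdot\nabla f$ is bounded (Sobolev embedding), $h$ is bounded, and $m,m_{\eta}$ are integrable. No further estimates are needed; this approximation/density step is the entire obstacle.
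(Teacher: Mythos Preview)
Your proof is correct and follows essentially the same route as the paper: both reduce to the identity $(\mathcal{L}_{\bar u}f,m_{\eta}-m)_{2}=-\eta(\theta\cdot\nabla f,hm_{\eta})_{2}$, then use $\mathcal{L}_{\bar u}f=g-\bar g$ and split $hm_{\eta}=hm+h(m_{\eta}-m)$. Your parenthetical alternative---testing (\ref{Eq:DifferentiabilityPerturbedOperator1}) against $f$ and integrating by parts---is in fact exactly how the paper writes it; your additional care in justifying $f$ as a test function via $H^{2}$-approximation is a detail the paper omits.
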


\begin{proof}
Keeping in mind (\ref{Eq:DifferentiabilityPerturbedOperator1}) and that
$m_{\eta}(y)$ and $m(y)$ are densities, the following hold
\begin{align*}
\left(  f,\mathcal{L}_{\bar{u}}^{\ast}(m_{\eta}-m)\right)  _{2}  &
=\eta\left(  f,\theta\cdot\nabla(hm_{\eta})\right)  _{2}\Rightarrow\nonumber\\
\left(  \mathcal{L}_{\bar{u}}f,(m_{\eta}-m)\right)  _{2}  &  =-\eta\left(
\theta\cdot\nabla f,hm_{\eta}\right)  _{2}\Rightarrow\nonumber\\
\left(  g,(m_{\eta}-m)\right)  _{2}  &  =-\eta\left(  \theta\cdot\nabla
f,hm\right)  _{2}-\eta\left(  \theta\cdot\nabla f,h(m_{\eta}-m)\right)  _{2}.
\end{align*}
This concludes the proof of the lemma.
\end{proof}

By Lemma \ref{L:Convexity1} we already know that $L_{2}^{o}$ is finite and
convex. To show that $L_{2}^{o}$ is differentiable, it is enough to show its
Legendre transform is strictly convex. For $\alpha\in\mathbb{R}^{d}$ define
\begin{align}
H(\alpha)  &  \doteq\sup_{\beta\in\mathbb{R}^{d}}\left[  \left\langle
\alpha,\beta\right\rangle -L_{2}^{o}(\beta)\right] \nonumber\\
&  =\sup_{(v,\mu)\in\mathcal{B}^{2,o}}\left[  \left\langle \alpha
,\int_{\mathcal{Y}}\left(  \gamma b(y)+c(y)+\sigma(y)v(y)\right)
\mu(dy)\right\rangle -\int_{\mathcal{Y}}\frac{1}{2}\left\Vert v(y)\right\Vert
^{2}\mu(dy)\right]  . \label{Eq:DualFunction}%
\end{align}

\begin{lemma}
\label{L:DifferentiabilityMainLemma} The Legrendre transform $H$ of $L_{2}%
^{o}$ is a strictly convex function of $\alpha\in\mathbb{R}^{d}$.
\end{lemma}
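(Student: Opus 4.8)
The plan is to prove strict convexity of $H$ (which is the convex conjugate of the finite convex function $L_2^o$, cf.\ Lemma \ref{L:Convexity1}) by ruling out affine pieces: it suffices to show that if $\zeta_1\ne\zeta_2$ and $t\in(0,1)$ satisfy $H(t\zeta_1+(1-t)\zeta_2)=tH(\zeta_1)+(1-t)H(\zeta_2)$, then $\zeta_1=\zeta_2$. First I would recast $H$ in a form whose supremum is attained. Exactly as in the proof of Lemma \ref{L:Convexity2} (using $\mathcal B^{2,o}=\cup_{\beta}\mathcal A_{\beta}^{2,o}$ and $L_2^r=L_2^o$),
\[
H(\zeta)=\sup_{(v,\mu)\in\mathcal B^{2,o}}\Big[\zeta\cdot\!\!\int_{\mathcal Y}\!\lambda_2(y,v(y))\,\mu(dy)-\tfrac12\!\!\int_{\mathcal Y}\!\|v(y)\|^2\mu(dy)\Big].
\]
Since $H$ is finite and convex, $\partial H(\zeta)\ne\emptyset$ for every $\zeta$; picking $\beta^*\in\partial H(\zeta)$ we have $\zeta\in\partial L_2^o(\beta^*)$, hence $H(\zeta)=\zeta\cdot\beta^*-L_2^o(\beta^*)$, and by Lemma \ref{L:BoundedOptimalControlRegime2} the value $L_2^o(\beta^*)$ is attained by a pair $(\bar u_{\beta^*},\bar\mu_{\beta^*})\in\mathcal A_{\beta^*}^{2,o}$ with $\bar u_{\beta^*}$ bounded and Lipschitz in $y$ and $\bar\mu_{\beta^*}$ the unique invariant measure of $\mathcal L_{\bar u_{\beta^*}}$. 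Thus the supremum in the display above is attained at a bounded, Lipschitz pair.

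Next I would localize the supposed flatness. Put $\zeta_m=t\zeta_1+(1-t)\zeta_2$, let $(v^*,\mu^*)$ be a bounded Lipschitz maximizer for $H(\zeta_m)$ as above, and write $\Lambda=\int_{\mathcal Y}\lambda_2(y,v^*(y))\mu^*(dy)$, $C=\tfrac12\int_{\mathcal Y}\|v^*\|^2\mu^*$. From $\zeta_m\cdot\Lambda-C=t(\zeta_1\cdot\Lambda-C)+(1-t)(\zeta_2\cdot\Lambda-C)$, the bounds $\zeta_i\cdot\Lambda-C\le H(\zeta_i)$, and the assumed equality it follows that $\zeta_i\cdot\Lambda-C=H(\zeta_i)$ for $i=1,2$. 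So the single bounded Lipschitz pair $(v^*,\mu^*)$ attains the suprema defining both $H(\zeta_1)$ and $H(\zeta_2)$.

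Then I would write first-order conditions. For $\theta\in\mathbb R^d$ and $h\in\mathcal H$ (relative to $\mu^*$), the perturbed controls $\bar u_\eta(y)=v^*(y)+\eta\,\sigma(y)^{-1}\theta h(y)$ are bounded and Lipschitz, so $(\bar u_\eta,\mu_\eta)\in\mathcal B^{2,o}$, where $\mu_\eta$ is the associated invariant measure; consequently $\eta\mapsto\zeta_i\cdot\!\int\lambda_2(y,\bar u_\eta)\mu_\eta-\tfrac12\!\int\|\bar u_\eta\|^2\mu_\eta$ has a maximum at $\eta=0$. Using Lemma \ref{L:Differentiability1} together with $\mu_\eta\to\mu^*$ weakly to differentiate at $\eta=0$ one gets $\frac{d}{d\eta}\big|_0\!\int g\,m_\eta=-\int_{\mathcal Y}h(y)\,\theta\cdot\nabla f_g(y)\,\mu^*(dy)$, where $\mathcal L_{v^*}f_g=g-\int g\,d\mu^*$ as in (\ref{Eq:DifferentiabilityAuxiliaryPDE}). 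Applying this componentwise to $g=$ the $k$-th component of $\lambda_2(\cdot,v^*(\cdot))$ and adding the explicit contribution of $\eta\theta h$ yields $\frac{d}{d\eta}\big|_0\!\int\lambda_2(y,\bar u_\eta)\mu_\eta=\theta-M(h)\theta$, with $M(h)_{kj}=\int h\,\partial_j f_k\,d\mu^*$ and $\mathcal L_{v^*}f_k=(\lambda_2(\cdot,v^*))_k-\beta_k$, $\beta=\Lambda$. The derivative of the running cost $\tfrac12\!\int\|\bar u_\eta\|^2\mu_\eta$ at $\eta=0$ does not depend on $i$, so subtracting the optimality conditions for $i=1$ and $i=2$ eliminates it and leaves, for $\xi:=\zeta_1-\zeta_2$, all $\theta\in\mathbb R^d$ and all $h\in\mathcal H$,
\[
\xi\cdot\theta=\int_{\mathcal Y}h(y)\,\theta\cdot\nabla F(y)\,\mu^*(dy),\qquad F:=\textstyle\sum_k\xi_k f_k,\quad \mathcal L_{v^*}F=\xi\cdot\lambda_2(\cdot,v^*)-\xi\cdot\beta.
\]
Taking $\theta=e_j$ and $h\equiv1\in\mathcal H$ gives $\int\partial_jF\,d\mu^*=\xi_j$, and then for general $h$ one gets $\int(h-1)\partial_jF\,d\mu^*=0$; since $\{h-1:h\in\mathcal H\}$ consists of all mean-zero bounded Lipschitz periodic functions, which are dense in the mean-zero subspace of $L^2(\mu^*)$ (the density of $\mu^*$ is continuous on the torus, hence bounded above and below), $\partial_jF$ equals $\mu^*$-a.e.\ the constant $\xi_j$ for each $j$. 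As $F\in C^1(\mathcal Y)$ by elliptic regularity and $\mu^*$ has a positive density, $\nabla F\equiv\xi$ on $\mathcal Y=\mathbb T^d$; a periodic function with constant gradient $\xi$ forces $\xi=0$, contradicting $\zeta_1\ne\zeta_2$. Hence $H$ is strictly convex.

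I expect the main obstacle to be the third step: making rigorous the differentiation of $\eta\mapsto\mu_\eta$ at $\eta=0$ — which is precisely what Lemma \ref{L:Differentiability1}, combined with the $O(\eta)$ control of $m_\eta-m$ and the weak continuity from Lemma \ref{L:BoundedOptimalControlRegime2}, is designed to deliver — and then correctly exploiting the joint freedom in $\theta$ and $h$ to pin down $\nabla F$ as a constant; once that is in place, periodicity of $F$ closes the argument immediately.
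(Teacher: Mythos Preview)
Your argument is correct and shares the paper's essential ingredients: a perturbation of the optimal control of the form $\bar u_\eta=v^*+\eta\sigma^{-1}\theta h$, Lemma \ref{L:Differentiability1} to differentiate the invariant measure, and the contradiction with periodicity of an auxiliary Poisson-equation solution. The organization, however, differs in two useful ways. First, you observe directly that the maximizer $(v^*,\mu^*)$ at the midpoint $\zeta_m$ must also maximize at both endpoints $\zeta_1,\zeta_2$, so subtracting the two first-order conditions immediately cancels the cost derivative; the paper instead subtracts $\langle\alpha,\bar\beta\rangle$, computes an $O(\eta)$ lower bound, and then eliminates the cost contribution by the rescaling trick $\alpha_i\mapsto\varepsilon_i\alpha_1+(1-\varepsilon_i)\alpha_2$, extracting the key term as the coefficient of $\varepsilon_1^2-\varepsilon_2^2$. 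Second, you allow $\theta\in\mathbb R^d$ to be arbitrary, which yields $\nabla F\equiv\xi$ on the torus in one stroke; the paper keeps $\theta=\alpha_1-\alpha_2$ fixed and obtains only the directional identity $\langle\alpha_1-\alpha_2,\partial\phi/\partial y\,(\alpha_1-\alpha_2)\rangle=\|\alpha_1-\alpha_2\|^2$, then integrates along the line $\dot\zeta_t=\alpha_1-\alpha_2$ to reach the same contradiction. Your route is slightly more streamlined; the paper's is more hands-on but equally valid.
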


\begin{proof}
Suppose that $H$ is not strictly convex. Then there are $\alpha_{i}%
\in\mathbb{R}^{d},i=1,2$ not equal such that for all $\xi\in\lbrack0,1]$
\begin{align*}
H(\xi\alpha_{1}+(1-\xi)\alpha_{2})  &  =\xi H(\alpha_{1})+(1-\xi)H(\alpha
_{2})\\
&  =\left\langle \xi\alpha_{1}+(1-\xi)\alpha_{2},\int_{\mathcal{Y}}(\gamma
b(y)+c(y)+\sigma(y)\bar{u}(y))\mu(dy)\right\rangle -\int_{\mathcal{Y}}\frac
{1}{2}\left\Vert \bar{u}\right\Vert ^{2}\mu(dy),
\end{align*}
where $\bar{\beta}\doteq\int_{\mathcal{Y}}(\gamma b(y)+c(y)+\sigma(y)\bar
{u}(y))\mu(dy)\in\partial H(\xi\alpha_{1}+(1-\xi)\alpha_{2})$ for all $\xi
\in\lbrack0,1]$. As in Lemma \ref{L:BoundedOptimalControlRegime2}, it can be
shown that $\bar{u}$ exists and can be chosen to be bounded and Lipschitz
continuous. Also, $\mu$ is the unique invariant measure corresponding to the
operator $\mathcal{L}_{\bar{u}(y)}$. We will argue that the last display is impossible.

First observe that by subtracting $\left\langle \alpha,\bar{\beta
}\right\rangle $ we can arrange that $H$ is constant for $\alpha=\xi\alpha
_{1}+(1-\xi)\alpha_{2}$, $\xi\in\lbrack0,1]$. Let
\[
\bar{H}(\alpha)=H(\alpha)-\left\langle \alpha,\bar{\beta}\right\rangle
=-\int_{\mathcal{Y}}\frac{1}{2}\left\Vert \bar{u}\right\Vert ^{2}\mu(dy).
\]
Consider $\xi=1/2+\eta$ with $\eta$ small (and possibly negative). We will
construct $(v,\mu)\in\mathcal{B}^{2,o}$ that will give a lower bound for
$H(\xi\alpha_{1}+(1-\xi)\alpha_{2})$ through (\ref{Eq:DualFunction}) that is
strictly bigger than $-\int_{\mathcal{Y}}\frac{1}{2}\left\Vert \bar
{u}\right\Vert ^{2}\mu(dy)$. This contradicts the constancy of $H(\xi
\alpha_{1}+(1-\xi)\alpha_{2})$ for $\xi\in\lbrack0,1]$, and thus implies that
$H$ is strictly convex.

Define $\bar{u}_{\eta}(y)$ by (\ref{Eq:DifferentiabilityAuxiliaryControl})
with $\theta\doteq\alpha_{1}-\alpha_{2}$ and $h\in\mathcal{H}$. For
$\xi=1/2+\eta$ we have $\alpha=\frac{1}{2}(\alpha_{1}+\alpha_{2})+\eta
(\alpha_{1}-\alpha_{2})$. The definition of $H(\alpha)$ by
(\ref{Eq:DualFunction}) implies%
\begin{align*}
\bar{H}(\alpha)  &  \geq\left\langle \alpha,\eta(\alpha_{1}-\alpha_{2}%
)\int_{\mathcal{Y}}h(y)m_{\eta}(y)dy\right\rangle +\int_{\mathcal{Y}%
}\left\langle \alpha,\left(  \gamma b(y)+c(y)+\sigma(y)\bar{u}(y)\right)
\right\rangle (m_{\eta}(y)-m(y))dy\\
&  \mbox{}-\int_{\mathcal{Y}}\frac{1}{2}\left\Vert \bar{u}(y)+\eta\sigma
^{-1}(y)(\alpha_{1}-\alpha_{2})h(y)\right\Vert ^{2}m_{\eta}(y)dy.
\end{align*}
For $i=1,\ldots,d$, let $\phi_{i}(y)$ be the solution to
(\ref{Eq:DifferentiabilityAuxiliaryPDE}) with $g(y)=q_{i}(y)$, the
$i^{\mathrm{th}}$ component of $q(y)=\gamma b(y)+c(y)+\sigma(y)\bar{u}(y)$. We
write $\phi=(\phi_{1},\ldots,\phi_{d})$, and also denote by $\psi(y)$ the
solution to (\ref{Eq:DifferentiabilityAuxiliaryPDE}) with $g(y)=\left\Vert
\bar{u}(y)\right\Vert ^{2}$. Then by Lemma \ref{L:Differentiability1} the last
display can be rewritten as%
\begin{align*}
\bar{H}(\alpha)  &  \geq\frac{1}{2}\eta\left[  \left\langle \alpha_{1}%
+\alpha_{2},(\alpha_{1}-\alpha_{2})\right\rangle -\int_{\mathcal{Y}%
}\left\langle \alpha_{1}+\alpha_{2},\frac{\partial\phi}{\partial y}%
(y)(\alpha_{1}-\alpha_{2})\right\rangle h(y)m(y)dy\right. \\
&  \mbox{}-2\int_{\mathcal{Y}}\left\langle \bar{u}(y),\sigma^{-1}%
(y)(\alpha_{1}-\alpha_{2})\right\rangle h(y)m(y)dy\\
&  \mbox{}\left.  +\int_{\mathcal{Y}}\left\langle \alpha_{1}-\alpha_{2}%
,\nabla\psi(y)\right\rangle h(y)m(y)dy\right]  -\frac{1}{2}\int_{\mathcal{Y}%
}\left\Vert \bar{u}(y)\right\Vert ^{2}m(y)dy+o(\eta)
\end{align*}
where $o(\eta)$ is such that $o(\eta)/\eta\downarrow0$ as $|\eta|\downarrow0$
and can be neglected.

Now for small $\eta$ (perhaps negative) this is strictly bigger than
$-\frac{1}{2}\int_{\mathcal{Y}}\left\Vert \bar{u}\right\Vert ^{2}\mu(dy)$
unless the $O(\eta)$ term is zero, i.e., unless%
\begin{align*}
0  &  =\left\langle \alpha_{1}+\alpha_{2},(\alpha_{1}-\alpha_{2})\right\rangle
-\int_{\mathcal{Y}}\left\langle \alpha_{1}+\alpha_{2},\frac{\partial\phi
}{\partial y}(y)(\alpha_{1}-\alpha_{2})\right\rangle h(y)m(y)dy\\
&  \mbox{}-2\int_{\mathcal{Y}}\left\langle \bar{u}(y),\sigma^{-1}%
(y)(\alpha_{1}-\alpha_{2})\right\rangle h(y)m(y)dy\\
&  \mbox{}+\int_{\mathcal{Y}}\left\langle \alpha_{1}-\alpha_{2},\nabla
\psi(y)\right\rangle h(y)m(y)dy.
\end{align*}
However, in the argument by contradiction $\alpha_{1}$ and $\alpha_{2}$ can be
replaced by any $\varepsilon_{1}\alpha_{1}+(1-\varepsilon_{1})\alpha_{2}$ and
$\varepsilon_{2}\alpha_{1}+(1-\varepsilon_{2})\alpha_{2}$, so long as
$0\leq\varepsilon_{1}<\varepsilon_{2}\leq1$. After performing this
substitution and some algebra, the last display becomes
\begin{align*}
0  &  =(\epsilon_{1}^{2}-\epsilon_{2}^{2})\int_{\mathcal{Y}}\left\langle
\alpha_{1}-\alpha_{2},\left(  I-\frac{\partial\phi}{\partial y}(y)\right)
(\alpha_{1}-\alpha_{2})\right\rangle h(y)m(y)dy\\
&  \mbox{}+2(\epsilon_{1}-\epsilon_{2})\left[  \int_{\mathcal{Y}}\left\langle
\alpha_{2},\left(  I-\frac{\partial\phi}{\partial y}(y)\right)  (\alpha
_{1}-\alpha_{2})\right\rangle h(y)m(y)dy\right. \\
&  \mbox{}\left.  -\int_{\mathcal{Y}}\left\langle \bar{u}(y),\sigma
^{-1}(y)(\alpha_{1}-\alpha_{2})\right\rangle h(y)m(y)dy+\frac{1}{2}%
\int_{\mathcal{Y}}\left\langle \alpha_{1}-\alpha_{2},\nabla\psi
(y)\right\rangle h(y)m(y)dy\right]  .
\end{align*}
We claim that the last display cannot be true since $\epsilon_{1}\neq
\epsilon_{2}$ and $\alpha_{1}-\alpha_{2}\neq0$. By considering various choices
for $\epsilon_{1}$ and $\epsilon_{2}$, it is enough show that the term
multiplying $(\epsilon_{1}^{2}-\epsilon_{2}^{2})$ is not zero for all
$h\in\mathcal{H}$. Let us assume the contrary, and that for all $h\in
\mathcal{H}$
\begin{equation}
\int_{\mathcal{Y}}\left\langle \alpha_{1}-\alpha_{2},\left(  I-\frac
{\partial\phi}{\partial y}(y)\right)  (\alpha_{1}-\alpha_{2})\right\rangle
h(y)m(y)dy=0. \label{Eq:FalseStetement1}%
\end{equation}
This implies that
\begin{equation}
\left\langle \alpha_{1}-\alpha_{2},\frac{\partial\phi}{\partial y}%
(y)(\alpha_{1}-\alpha_{2})\right\rangle =\left\Vert \alpha_{1}-\alpha
_{2}\right\Vert ^{2}\text{ for all }y\in\mathcal{Y}.
\label{Eq:FalseStetement3}%
\end{equation}
Define
\[
\Phi(y)\doteq(\alpha_{1}-\alpha_{2})\cdot\phi(y)
\]
Then $\Phi$ is a periodic, bounded and $C^{1}(\mathbb{R}^{d})$ function.
Consider any trajectory $\zeta_{t}:\mathbb{R}_{+}\mapsto\mathcal{Y}$ such that
$\dot{\zeta}_{t}=(\alpha_{1}-\alpha_{2})$. Differentiation of $\Phi(\zeta
_{t})$ and use of (\ref{Eq:FalseStetement3}) give
\[
\frac{d}{dt}\Phi(\zeta_{t})=\left\langle \alpha_{1}-\alpha_{2},\frac
{\partial\phi}{\partial y}(\zeta_{t})(\alpha_{1}-\alpha_{2})\right\rangle
=\left\Vert \alpha_{1}-\alpha_{2}\right\Vert ^{2}>0,
\]
which cannot be true due to the periodicity and boundedness of $\Phi$. This
implies that (\ref{Eq:FalseStetement1}) is false, i.e., that there is
$h\in\mathcal{H}$ such that
\[
\int_{\mathcal{Y}}\left\langle \alpha_{1}-\alpha_{2},\left(  I-\frac
{\partial\phi}{\partial y}(y)\right)  (\alpha_{1}-\alpha_{2})\right\rangle
h(y)m(y)dy\neq0.
\]
This concludes the proof of the lemma.
\end{proof}

Let us now recall the $x-$dependence and prove that the local rate function
$L_{2}^{o}(x,\beta)$ is continuous in $(x,\beta)\in\mathbb{R}^{2d}$.

\begin{lemma}
\label{L:ContinuityLocalRateFunction} The local rate function $L_{2}%
^{o}(x,\beta)$ is continuous in $(x,\beta)\in\mathbb{R}^{2d}$.
\end{lemma}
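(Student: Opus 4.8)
The plan is to establish continuity of $L_2^o(x,\beta)$ in the pair $(x,\beta)$ by combining the convexity already proven in Lemma \ref{L:Convexity1} with the explicit control construction from Lemma \ref{L:BoundedOptimalControlRegime2}. Since for each fixed $x$ the function $\beta\mapsto L_2^o(x,\beta)$ is finite and convex on all of $\mathbb{R}^d$, it is automatically continuous in $\beta$; moreover convex functions that are finite on a neighborhood are locally Lipschitz, which gives local Lipschitz continuity in $\beta$ with a modulus controlled by local bounds on the function. So the real content is joint continuity, and the natural strategy is to fix $(x_0,\beta_0)$, take a sequence $(x_n,\beta_n)\to(x_0,\beta_0)$, and show $L_2^o(x_n,\beta_n)\to L_2^o(x_0,\beta_0)$ by proving matching $\limsup$ and $\liminf$ inequalities.

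First I would prove the upper bound $\limsup_n L_2^o(x_n,\beta_n)\le L_2^o(x_0,\beta_0)$. Take the optimal pair $(\bar u,\bar\mu)=(\bar u_{\beta_0}(x_0,\cdot),\bar\mu_{\bar u}(\cdot|x_0))$ from Lemma \ref{L:BoundedOptimalControlRegime2}; it is bounded and Lipschitz in $y$. I would then perturb this control into an admissible control for the parameters $(x_n,\beta_n)$: because $\sigma$ is uniformly nondegenerate, one can add a correction of the form $\sigma^{-1}(x_n,y)\,w_n$ for a suitable vector $w_n\to 0$ to force the constraint $\beta_n=\int_{\mathcal Y}\lambda_2(x_n,y,v(y))\mu(dy)$ to hold, where $\mu$ is the invariant measure associated with the perturbed control and operator $\mathcal L^2_{v(y),x_n}$. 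The key facts needed are: (i) for bounded Lipschitz controls the invariant measure depends continuously (weakly) on $(x,v)$, which follows from the nondegeneracy and standard elliptic/ergodic estimates as in the vanishing discount argument of Lemma \ref{L:BoundedOptimalControlRegime2}; (ii) the map from the correction vector $w$ to the resulting $\beta$ is continuous and, near $w=0$, a small perturbation of an onto linear map (its linearization involves $\int \sigma\sigma^{-1}\mu = I$ plus lower-order terms), so it can be inverted for $n$ large with $w_n\to 0$. Then the cost $\tfrac12\int\|v_n\|^2\mu_n(dy)\to\tfrac12\int\|\bar u\|^2\bar\mu(dy)=L_2^o(x_0,\beta_0)$, giving the upper bound.

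Next I would prove the lower bound $\liminf_n L_2^o(x_n,\beta_n)\ge L_2^o(x_0,\beta_0)$. For each $n$ pick a nearly optimal bounded Lipschitz pair $(v_n,\mu_n)\in\mathcal A^{2,o}_{x_n,\beta_n}$ (available by Lemma \ref{L:BoundedOptimalControlRegime2}) with $\tfrac12\int\|v_n\|^2\mu_n(dy)\le L_2^o(x_n,\beta_n)+1/n$; by the $\limsup$ bound just proven, these costs are bounded, so $\{\mu_n\}$ is tight on the compact space $\mathcal Y$ and, using that $\|z\|^2$ is a tightness function, the relaxed measures $\mathrm P_n(dzdy)=\delta_{v_n(y)}(dz)\mu_n(dy)$ are tight on $\mathcal Z\times\mathcal Y$. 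Passing to a subsequential limit $\mathrm P$, Fatou gives $\int\|z\|^2\mathrm P(dzdy)\le\liminf_n\int\|z\|^2\mathrm P_n(dzdy)$, and the continuity of $\lambda_2$ and of $\mathcal L^2_{z,x}$ in $(x,y)$ together with their affineness in $z$ (exactly the argument used in Lemma \ref{L:ContinuityMap}) shows $\mathrm P\in\mathcal A^{2,r}_{x_0,\beta_0}$. Hence $\liminf_n L_2^o(x_n,\beta_n)=\liminf_n L_2^r(x_n,\beta_n)\ge\tfrac12\int\|z\|^2\mathrm P(dzdy)\ge L_2^r(x_0,\beta_0)=L_2^o(x_0,\beta_0)$, which is the desired inequality. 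Combining the two bounds yields continuity.

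The main obstacle is the upper bound step, specifically the quantitative control of how the invariant measure $\mu$ moves when both the slow variable $x$ and the feedback control $v$ are perturbed, and the accompanying surjectivity-of-the-constraint argument needed to land exactly on the target $\beta_n$. This requires more than weak continuity of $x\mapsto\bar\mu_{\bar u}(\cdot|x)$ (already asserted in Theorem \ref{T:BoundedOptimalControlRegime2}): one needs that the correction vector solving $\beta_n=\int\lambda_2(x_n,y,\bar u_{\beta_0}(x_0,y)+\sigma^{-1}(x_n,y)w)\mu^w_n(dy)$ exists and tends to zero, which follows from an implicit-function / degree argument using that the Jacobian at $w=0$ is close to the identity for $n$ large. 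All other steps are routine given the earlier lemmas; in particular the $\beta$-continuity is immediate from finiteness and convexity, and the $\liminf$ inequality is a direct transcription of the viability-limit argument of Lemma \ref{L:ContinuityMap}.
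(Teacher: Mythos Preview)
Your lower-semicontinuity ($\liminf$) argument is essentially the same as the paper's: pass to the relaxed formulation, use $\|z\|^2$ as a tightness function, and apply Fatou together with the affine-in-$z$ structure to identify the limit as an element of $\mathcal A^{2,r}_{x_0,\beta_0}$. That part is fine.

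The gap is in your upper-semicontinuity step. You assert that the Jacobian of the map
\[
w\longmapsto \int_{\mathcal Y}\lambda_2\bigl(x_n,y,\bar u_{\beta_0}(x_0,y)+\sigma^{-1}(x_n,y)w\bigr)\,\mu^w_n(dy)
\]
is ``close to the identity'' at $w=0$, with the invariant-measure sensitivity contributing only ``lower-order terms.'' But writing out $\lambda_2$, this map is $w+\int_{\mathcal Y}[\gamma b(x_n,y)+c(x_n,y)+\sigma(x_n,y)\bar u_{\beta_0}(x_0,y)]\,\mu^w_n(dy)$, and the derivative of the second term with respect to $w$ is an $O(1)$ matrix coming from the sensitivity of the invariant measure to a constant drift shift on the torus; there is no smallness parameter suppressing it. So neither the implicit function theorem nor a local degree argument is available without further work, and in particular you do not get $w_n\to 0$ for free.

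The paper avoids this difficulty by a different construction. Given $x_n\to x$, it defines $u^n(y)$ by the pointwise relation
\[
\gamma b(x_n,y)+c(x_n,y)+\sigma(x_n,y)u^n(y)=\gamma b(x,y)+c(x,y)+\sigma(x,y)\bar u_\beta(x,y),
\]
so that the first-order part of $\mathcal L^2_{u^n,x_n}$ coincides with that of $\mathcal L^2_{\bar u_\beta,x}$ and only the diffusion coefficient changes; this yields weak convergence of the invariant measures with no control-sensitivity analysis. The resulting velocity is a \emph{specific} sequence $\beta_n\to\beta$, not an arbitrary one, and the paper then upgrades to arbitrary $(x_n,\beta_n)\to(x,\beta)$ by a convexity interpolation: choose finitely many $\gamma^j$ whose convex hull contains $\beta$ in its interior, run the construction for each $\gamma^j$, and write $\beta_n$ as a convex combination of the resulting $\gamma^j_n$. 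This two-step route (match the drift, then bootstrap via convexity) replaces your implicit-function step and is what you are missing.
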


\begin{proof}
First, we prove that $L_{2}^{o}(x,\beta)$ is lower semicontinuous in
$(x,\beta)\in\mathbb{R}^{2d}$. We work with the relaxed formulation of the
local rate function, but as noted previously $L_{2}^{r}(x,\beta)=L_{2}%
^{o}(x,\beta)$.

Consider $\{(x_{n},\beta_{n})\in\mathbb{R}^{2d}\}_{n\in\mathbb{N}}$ such that
$(x_{n},\beta_{n})\rightarrow(x,\beta)$. We want to prove
\[
\liminf_{n\rightarrow\infty}L_{2}^{r}(x_{n},\beta_{n})\geq L_{2}^{r}%
(x,\beta).
\]
Let $M<\infty$ such that $\liminf_{n\rightarrow\infty}L_{2}^{r}(x_{n}%
,\beta_{n})\leq M$. The definition of $L_{2}^{r}(x_{n},\beta_{n})$ implies
that we can find measures $\{\mathrm{P}^{n},n<\infty\}$ satisfying
$\mathrm{P}^{n}\in\mathcal{A}_{x_{n},\beta_{n}}^{2,r}$ such that
\begin{equation}
\sup_{n<\infty}\frac{1}{2}\int_{\mathcal{Z}\times\mathcal{Y}}\left\Vert
z\right\Vert ^{2}\mathrm{P}^{n}(dzdy)<M+1 \label{L2bound}%
\end{equation}
and
\[
L_{2}^{r}(x_{n},\beta_{n})\geq\left[  \frac{1}{2}\int_{\mathcal{Z}%
\times\mathcal{Y}}\left\Vert z\right\Vert ^{2}\mathrm{P}^{n}(dzdy)-\frac{1}%
{n}\right]
\]
It follows from (\ref{L2bound}) and the definition of $\mathcal{A}_{x,\beta
}^{2,r}$ that $\{\mathrm{P}^{n},n<\infty\}$ is tight and any limit point
$\mathrm{P}$ of $\mathrm{P}^{n}$ will be in $\mathcal{A}_{x,\beta}^{2,r}$.
Hence by Fatou's Lemma
\begin{align*}
\liminf_{n\rightarrow\infty}L_{2}^{r}(x_{n},\beta_{n})  &  \geq\liminf
_{n\rightarrow\infty}\left[  \frac{1}{2}\int_{\mathcal{Z}\times\mathcal{Y}%
}\left\Vert z\right\Vert ^{2}\mathrm{P}^{n}(dzdy)-\frac{1}{n}\right] \\
&  \geq\frac{1}{2}\int_{\mathcal{Z}\times\mathcal{Y}\times}\left\Vert
z\right\Vert ^{2}\mathrm{P}(dzdy)\\
&  \geq\inf_{\mathrm{P}\in\mathcal{A}_{x,\beta}^{2,r}}\left[  \frac{1}{2}%
\int_{\mathcal{Z}\times\mathcal{Y}\times}\left\Vert z\right\Vert
^{2}\mathrm{P}(dzdy)\right] \\
&  =L_{2}^{r}(x,\beta),
\end{align*}
which concludes the proof of lower semicontinuity of $L_{2}^{r}(x,\beta
)=L_{2}^{o}(x,\beta)$.

Next we prove that $L_{2}^{o}(x,\beta)$ is upper semicontinuous. Fix
$(x,\beta)\in\mathbb{R}^{2d}$. By Lemma \ref{L:BoundedOptimalControlRegime2},
we know that the optimal control $\bar{u}=\bar{u}_{\beta}(x,y)$ exists and can
be chosen to be bounded and continuous in $y$. Hence, there is a unique
invariant measure corresponding to the operator $\mathcal{L}_{\bar{u}_{\beta
}(x,y),x}^{2}$ which will be denoted by $\bar{\mu}_{\bar{u}}(dy|x)$.

Let $\{x_{n}\in\mathbb{R}^{d}\}$ be such that $x_{n}\rightarrow x$ and define
a control $u^{n}$ by the formula
\begin{equation}
\gamma b(x_{n},y)+c(x_{n},y)+\sigma(x_{n},y)u^{n}(y)=\gamma
b(x,y)+c(x,y)+\sigma(x,y)\bar{u}_{\beta}(x,y).
\label{Eq:DefinitionOfControlContinuityOfL}%
\end{equation}
Since $\sigma(x,y)$ is nondegenerate, $u^{n}(y)$ is uniquely defined,
continuous in $y$ and uniformly bounded in $(n,y)$, i.e., there exists a
constant $M<\infty$ such that $\sup_{(n,y)\in\mathbb{N}\times\mathcal{Y}%
}\left\Vert u^{n}(y)\right\Vert \leq M$. It follows from
\[
\sigma(x_{n},y)\left[  u^{n}(y)-\bar{u}_{\beta}(x,y)\right]  =\left[
\sigma(x,y)-\sigma(x_{n},y)\right]  \bar{u}_{\beta}(x,y)+\gamma\left[
b(x,y)-b(x_{n},y)\right]  +\left[  c(x,y)-c(x_{n},y)\right]
\]
that in fact $u^{n}(y)$ converges to $\bar{u}_{\beta}(x,y)$ uniformly in $y$.
Since $u^{n}(y)$ is bounded and Lipschitz continuous there is a unique
invariant measure corresponding to $\mathcal{L}_{u^{n}(y),x_{n}}^{2}$ which
will be denoted by $\theta^{n}(dy)$.

Owing to the definition of $u^{n}(y)$ via
(\ref{Eq:DefinitionOfControlContinuityOfL}), the operator $\mathcal{L}%
_{u^{n}(y),x_{n}}^{2}$ takes the form
\[
\mathcal{L}_{u^{n}(y),x_{n}}^{2}=\left[  \gamma b(x,y)+c(x,y)+\sigma
(x,y)\bar{u}_{\beta}(x,y)\right]  \cdot\nabla_{y}+\gamma\frac{1}{2}%
\sigma(x_{n},y)\sigma(x_{n},y)^{T}:\nabla_{y}\nabla_{y}.
\]
Hence by Condition \ref{A:Assumption1}, it follows that $\theta^{n}%
(dy)\rightarrow\bar{\mu}_{\bar{u}}(dy|x)$ in the topology of weak convergence.
Let $\{\beta_{n}\in\mathbb{R}^{d}\}$ be defined by%
\begin{align}
\beta_{n}  &  =\int_{y\in\mathcal{Y}}\left(  \gamma b(x_{n},y)+c(x_{n}%
,y)+\sigma(x_{n},y)u^{n}(y)\right)  \theta^{n}(dy)\nonumber\\
&  =\int_{y\in\mathcal{Y}}\left(  \gamma b(x,y)+c(x,y)+\sigma(x,y)\bar
{u}_{\beta}(x,y)\right)  \theta^{n}(dy).\nonumber
\end{align}
Then the weak convergence $\theta^{n}(dy)\Rightarrow\bar{\mu}_{\bar{u}}%
(dy|x)$, the uniform convergence of $u^{n}(y)$ to $\bar{u}_{\beta}(x,y)$, and
the continuity in $y$ of the function $\gamma b(x,y)+c(x,y)+\sigma(x,y)\bar
{u}_{\beta}(x,y)$ imply that $\beta_{n}\rightarrow\beta$. Thus
\begin{align*}
\limsup_{n\rightarrow\infty}L_{2}^{o}(x_{n},\beta_{n})  &  =\limsup
_{n\rightarrow\infty}\inf_{(v,\mu)\in\mathcal{A}_{x_{n},\beta_{n}}^{2,r}%
}\left\{  \frac{1}{2}\int_{\mathcal{Y}}\left\Vert v(y)\right\Vert ^{2}%
\mu(dy)\right\} \\
&  \leq\limsup_{n\rightarrow\infty}\frac{1}{2}\int_{\mathcal{Y}}\left\Vert
u^{n}(y)\right\Vert ^{2}\theta^{n}(dy)\\
&  =\frac{1}{2}\int_{\mathcal{Y}}\left\Vert \bar{u}_{\beta}(x,y)\right\Vert
^{2}\bar{\mu}_{\bar{u}}(dy|x)\\
&  =L_{2}^{o}(x,\beta).
\end{align*}
Line $2$ follows from the choice of a particular control. Line $3$ follows
from the uniform convergence of $\left\Vert u^{n}(y)\right\Vert ^{2}$ to
$\left\Vert \bar{u}_{\beta}(x,y)\right\Vert ^{2}$, the continuity and
boundedness of $\bar{u}_{\beta}(x,y)$ in $y$, and the weak convergence
$\theta^{n}(dy)\Rightarrow\bar{\mu}_{\bar{u}}(dy|x)$. Line $4$ follows from
the fact that $\bar{u}$ is the control that achieves the infimum in the
definition of $L_{2}^{o}(x,\beta)$.

We have shown that if $x_{n}\rightarrow x$ then there exists $\{\beta_{n}%
\in\mathbb{R}^{d}\}$ such that $\beta_{n}\rightarrow\beta$ and $\limsup
_{n\rightarrow\infty}L_{2}^{o}(x_{n},\beta_{n})\leq L_{2}^{o}(x,\beta)$. We
claim that in fact the same is true for any sequence $\beta_{n}\rightarrow
\beta$. Let $\delta>0$ be given. Since $L_{2}^{o}(x,\cdot)$ is finite and
convex, we can choose $\rho^{j}>0,\gamma^{j}\in\mathbb{R}^{d},j=1,\ldots,d$,
such that the convex hull of $\gamma^{j}\in\mathbb{R}^{d},j=1,\ldots,d$ has
nonempty interior, $\sum_{j=1}^{d}\rho^{j}=1,\beta=\sum_{j=1}^{d}\rho
^{j}\gamma^{j},$ and
\[
L_{2}^{o}(x,\beta)\geq\sum_{j=1}^{d}\rho^{j}L_{2}^{o}(x,\gamma^{j})-\delta.
\]
For each $\gamma^{j}$ construct a sequence $\gamma_{n}^{j}$ such that
$\gamma_{n}^{j}\rightarrow\gamma^{j}$ and $\limsup_{n\rightarrow\infty}%
L_{2}^{o}(x_{n},\gamma_{n}^{j})\leq L_{2}^{o}(x,\gamma^{j})$. Since for all
sufficiently large $n$ $\beta_{n}$ is in the interior of the convex hull of
$\gamma^{j},j=1,\ldots,d$, there are for all such $n$ $\rho_{n}^{j}>0$ such
that $\sum_{j=1}^{d}\rho_{n}^{j}=1,$ $\beta_{n}=\sum_{j=1}^{d}\rho_{n}%
^{j}\gamma_{n}^{j},$ and $\rho_{n}^{j}\rightarrow\rho^{j}$. By convexity
\[
\limsup_{n\rightarrow\infty}L_{2}^{o}(x_{n},\beta_{n})\leq\limsup
_{n\rightarrow\infty}\sum_{j=1}^{d}\rho_{n}^{j}L_{2}^{o}(x_{n},\gamma_{n}%
^{j})\leq\sum_{j=1}^{d}\rho^{j}L_{2}^{o}(x,\gamma^{j})\leq L_{2}^{o}%
(x,\beta)+\delta.
\]
Letting $\delta\downarrow0$ concludes the proof of the lemma.
\end{proof}

\begin{lemma}
\label{L:ContinuousOptimalControlRegime2} The control $\bar{u}=\bar{u}_{\beta
}(x,y)$ constructed in the proof of Lemma \ref{L:BoundedOptimalControlRegime2}
is continuous in $x$, Lipschitz continuous in $y$ and measurable in
$(x,y,\beta) $. Moreover, the invariant measure $\bar{\mu}_{\bar{u}}(dy|x)$
corresponding to the operator $\mathcal{L}^{2}_{\bar{u}_{\beta}(x,y),x}$ is
weakly continuous as a function of $x$.
\end{lemma}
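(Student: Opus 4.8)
The plan is to leverage the explicit representation of the optimal control obtained in the proof of Lemma~\ref{L:BoundedOptimalControlRegime2}. Reinstating the $x$-dependence, that control is
$\bar u_\beta(x,y) = -\sigma(x,y)^T\bigl(\nabla_y W_\beta(x,y) - \zeta_\beta(x)\bigr)$,
where $\zeta_\beta(x)\in\partial L_2^o(x,\beta)$ and $W_\beta(x,\cdot)$ is the unique (subject to $W_\beta(x,0)=0$ and periodicity) classical solution of the ergodic Bellman equation~(\ref{Eq:ErgodicControlProblemRegime2_1}) with the parameter $x$ frozen. Since $\sigma$ is itself continuous in $x$ and Lipschitz in $y$, the asserted regularity of $\bar u_\beta$ will follow once we establish (a) continuity of $x\mapsto\zeta_\beta(x)$, and (b) continuity of $x\mapsto W_\beta(x,\cdot)$ in $C^1(\mathcal{Y})$ (in fact in $C^2(\mathcal{Y})$, which also delivers the uniform Lipschitz-in-$y$ bound).

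For step (a), by Theorem~\ref{T:LocalRateFunctionRegime2} the function $\beta\mapsto L_2^o(x,\beta)$ is finite, convex and differentiable, so $\partial L_2^o(x,\beta)=\{\nabla_\beta L_2^o(x,\beta)\}$ is a singleton and necessarily $\zeta_\beta(x)=\nabla_\beta L_2^o(x,\beta)$. By Lemma~\ref{L:ContinuityLocalRateFunction} the convex functions $L_2^o(x_n,\cdot)$ converge pointwise to $L_2^o(x,\cdot)$ whenever $x_n\to x$, and since the limit is differentiable, a standard convex-analysis fact (pointwise convergence of convex functions forces convergence of subgradients at any point of differentiability of the limit) gives $\zeta_\beta(x_n)\to\zeta_\beta(x)$. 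Running the same argument along arbitrary sequences in $(x,\beta)$ yields joint continuity, hence Borel measurability, of $(x,\beta)\mapsto\zeta_\beta(x)$.

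For step (b), I would use stability of the ergodic Bellman equation under continuous perturbation of its data. Carrying out the minimization in $v$ in~(\ref{Eq:ErgodicControlProblemRegime2_1}) turns the equation into the uniformly elliptic HJB equation with quadratic nonlinearity in $\nabla_y W$, coefficients $\gamma b(x,\cdot),\,c(x,\cdot),\,\gamma\sigma\sigma^T(x,\cdot)$ and parameter $\zeta_\beta(x)$. Under Condition~\ref{A:Assumption1} these data are bounded uniformly for $x$ in a compact set, uniformly elliptic, and, by step~(a), $\zeta_\beta(x)$ is locally bounded; hence interior Schauder (or $W^{2,p}$) estimates together with the normalization $W_\beta(x,0)=0$ give a bound on $\|W_\beta(x,\cdot)\|_{C^{2,\alpha}(\mathcal{Y})}$ uniform on compact $x$-sets. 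Given $x_n\to x$, Arzel\`a--Ascoli then extracts a subsequence with $W_\beta(x_n,\cdot)\to\tilde W$ in $C^2(\mathcal{Y})$ and $\rho(x_n,\beta)\to\tilde\rho$; passing to the limit in the PDE and using continuity of the coefficients and of $\zeta_\beta(\cdot)$ shows $(\tilde W,\tilde\rho)$ solves the ergodic Bellman equation for the parameter $x$ with $\tilde W(0)=0$. By the uniqueness recalled in Lemma~\ref{L:BoundedOptimalControlRegime2}, $\tilde W=W_\beta(x,\cdot)$ and $\tilde\rho=\rho(x,\beta)$, and since the limit is independent of the subsequence the whole sequence converges. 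Consequently $\bar u_\beta(x_n,\cdot)\to\bar u_\beta(x,\cdot)$ uniformly on $\mathcal{Y}$, with the Lipschitz-in-$y$ bound inherited from $\|W_\beta(x,\cdot)\|_{C^2}$ and the Lipschitz regularity of $\sigma$; joint measurability in $(x,y,\beta)$ follows from the separate continuity just established via a standard measurable-selection (Carath\'eodory) argument for $(x,\beta)\mapsto W_\beta(x,\cdot)$.

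Finally, weak continuity of $x\mapsto\bar\mu_{\bar u}(dy|x)$ is obtained exactly as in the proof of Lemma~\ref{L:ContinuityLocalRateFunction}: for $x_n\to x$ the operators $\mathcal{L}^2_{\bar u_\beta(x_n,y),x_n}$ have drift $\gamma b(x_n,y)+c(x_n,y)+\sigma(x_n,y)\bar u_\beta(x_n,y)$ and diffusion $\gamma\sigma\sigma^T(x_n,y)$ converging uniformly in $y$ (using the uniform convergence of $\bar u_\beta(x_n,\cdot)$) and are uniformly nondegenerate, so the usual stability of invariant measures for uniformly elliptic diffusions on the torus --- via uniform $H^1$ (or $C^\alpha$) bounds on the densities solving $\mathcal{L}^{2,\ast}_{\bar u_\beta(x_n,\cdot),x_n}m=0$ and uniqueness of the limiting invariant measure --- gives $\bar\mu_{\bar u}(dy|x_n)\Rightarrow\bar\mu_{\bar u}(dy|x)$. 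The step I expect to be the crux is (b): obtaining the a~priori estimate on $W_\beta(x,\cdot)$ that is \emph{uniform in $x$} and passing to the limit in the nonlinear ergodic equation while keeping track of the normalization; once that is in place the remaining assertions are routine.
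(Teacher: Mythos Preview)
Your proposal is correct and largely parallels the paper's argument: step~(a) on the continuity of $\zeta_\beta(x)$ via differentiability of $L_2^o$ in $\beta$ and joint continuity of $L_2^o$ is exactly what the paper does, and the weak continuity of the invariant measure is handled the same way (the paper simply cites a reference for stability of invariant measures under continuous perturbation of coefficients, where you spell out the density argument).

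The one genuine difference is in step~(b), the continuity of $x\mapsto\nabla_y W_\beta(x,\cdot)$. You argue by compactness: uniform $C^{2,\alpha}$ a~priori estimates on $W_\beta(x,\cdot)$, Arzel\`a--Ascoli, passage to the limit in the nonlinear ergodic equation, and identification of the limit by uniqueness. The paper instead observes that the difference $\Phi(y)=\bar W_\beta(x_1,y)-\bar W_\beta(x_2,y)$ satisfies a \emph{linear} uniformly elliptic equation (the quadratic gradient term linearizes via $\|p\|^2-\|q\|^2=(p+q)\cdot(p-q)$), with coefficients and right-hand side that are small when $x_1$ is close to $x_2$ thanks to the already established continuity of $\bar H_\beta(x)$ and $\zeta_\beta(x)$; standard linear elliptic estimates then give $\nabla_y\Phi\to 0$ directly. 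Your compactness route is more general-purpose and does not exploit the quadratic structure, at the cost of needing the uniform-in-$x$ a~priori bound you flag as the crux; the paper's linearization trick is shorter and more explicit, but is tied to the specific form of the nonlinearity. Both are valid and standard.
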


\begin{proof}
Recall that
\[
\bar{u}_{\beta}(x,y)=-\sigma(x,y)^{T}(\nabla_{y}W_{\beta}(x,y)-\zeta_{\beta
}(x)),
\]
where $\zeta_{\beta}(x)$ is a subdifferential of $L_{2}^{o}(x,\beta)$ at
$\beta$. By Lemma \ref{L:DifferentiabilityMainLemma}, the subdifferential of
$L_{2}^{o}(x,\beta)$ with respect to $\beta$ consists only of the gradient
$\nabla_{\beta}L_{2}^{o}(x,\beta)$. Then continuity of $\zeta_{\beta}(x) $
follows from this uniqueness and the joint continuity of $L_{2}^{o}(x,\beta)$
established in Lemma \ref{L:ContinuityLocalRateFunction}.

Lipschitz continuity in $y$ of $\bar{u}_{\beta}(x,y)$ was established in Lemma
\ref{L:BoundedOptimalControlRegime2}. We insert $\bar{u}_{\beta}(x,y)$ as the
optimizer into (\ref{Eq:ErgodicControlProblemRegime2_1}). Recall that
$\mathcal{L}_{z,x}^{2}$ is an operator in $y$ only and denote by
$\mathcal{L}_{0,x}^{2}$ the operator $\mathcal{L}_{z,x}^{2}$ with the control
variable $z=0$. After some rearrangement of terms we get the equation
\begin{equation*}
\mathcal{L}_{0,x}^{2}\bar{W}_{\beta}(x,y)-\frac{1}{2}\left\Vert \sigma
^{T}(x,y)\nabla_{y}\bar{W}_{\beta}(x,y)\right\Vert ^{2}=\bar{H}_{\beta}(x),
\label{Eq:ErgodicControlProblemRegime2_2}%
\end{equation*}
where $\nabla_{y}\bar{W}_{\beta}(x,y)=\nabla_{y}W_{\beta}(x,y)-\zeta_{\beta
}(x)$ and $\bar{H}_{\beta}(x)=\rho(x,\beta)-\zeta_{\beta}(x)\cdot\beta
=\tilde{L}_{2}^{r}(x,\zeta_{\beta})$. This is now in the standard form for the
Bellman equation of an ergodic control problem. As before a classical sense
solution exists, and as a consequence we have the representation
\[
\bar{H}_{\beta}(x)=\inf_{v}\limsup_{T\rightarrow\infty}\frac{1}{T}%
\mathbb{E}\int_{0}^{T}\left(  \frac{1}{2}\left\Vert v_{s}\right\Vert
^{2}-\zeta_{\beta}(x)\cdot(\gamma b(x,Y_{s})+c(x,Y_{s})+\sigma(x,Y_{s}%
)v_{s})\right)  ds,
\]
where the infimum is over all progressively measurable controls. Since by
Condition \ref{A:Assumption1} $b,c,$ and $\sigma$ are continuous in $x$
uniformly in $y$ and since $\zeta_{\beta}(x)$ is continuous in $x$, $\bar
{H}_{\beta}(x)$ is continuous in $x$.

A straight forward calculation shows that for any $x_{1},x_{2}\in\mathbb{R}^{d}$, the function $\Phi(y)=\bar{W}_{\beta}(x_{1},y)-\bar{W}_{\beta}(x_{2},y)$ satisfies a linear equation. This observation and the general theory for uniformly elliptic equations (see
\cite{GilbargTrudinger}) together with the continuity in $x$ of $\bar
{H}_{\beta}(x)$ and $\zeta_{\beta}(x)$ and Condition \ref{A:Assumption1} imply
that $\nabla_{y}\bar{W}_{\beta}(x,y)$ is continuous in $x$ as well. Hence, due
to the continuity of $\sigma$ we conclude that $\bar{u}_{\beta}(x,y)=-\sigma
(x,y)^{T}\nabla_{y}\bar{W}_{\beta}(x,y)$ is continuous in $x $. Measurability
is clear.

Lastly, due to continuity of the optimal control $\bar{u}$ in $x$, Condition
\ref{A:Assumption1} and uniqueness of $\bar{\mu}_{\bar{u}(x,y)}(dy|x)$ for
each $x$, we conclude that $\bar{\mu}_{\bar{u}(x,y)}(dy|x)$ is weakly
continuous as a function of $x$ (see, e.g., Section $3$ in
\cite{ArtsteinVigodner}).
\end{proof}

\section{Laplace principle upper bound for Regime 3}

\label{S:LaplacePrincipleRegime3}

In this section we discuss the Laplace principle upper bound for Regime 3. For
notational convenience we drop the superscript $3$ from $\bar{X}^{3}$ and
$\mathrm{P}^{3}$.

We consider the general multidimensional case when $c(x,y)=c(y)$ and
$\sigma(x,y)=\sigma(y)$. In Remark \ref{R:MultivaluedMapRegime3_2} we discuss
the case when the functions $c$ and $\sigma$ depend on $x$ as well. In
Subsection \ref{SS:Regime3alternativeFormula} we consider the $d=1$ case. For
$d=1$ we can establish the LDP when the coefficients depend on $x$ as well and
we provide an alternative expression for the rate function together with a
control that nearly achieves the large deviations lower bound at the prelimit
level. An easy computation shows that this alternate expression is equivalent
to the corresponding expression in \cite{FS} for $b(x,y)=b(y)$, $c(x,y)=c(y)$
and $\sigma(x,y)=\sigma(y)$ for $d=1$.

\begin{proof}
[Remarks on the proof of Laplace principle upper bound for Regime 3]For each
$\epsilon>0$, let $X^{\epsilon}$ be the unique strong solution to
(\ref{Eq:LDPandA1}). To prove the Laplace principle upper bound we must show
that for all bounded, continuous functions $h$ mapping $\mathcal{C}%
([0,1];\mathbb{R}^{d})$ into $\mathbb{R}$
\begin{equation*}
\limsup_{\epsilon\downarrow0}-\epsilon\ln\mathbb{E}_{x_{0}}\left[
\exp\left\{  -\frac{h(X^{\epsilon})}{\epsilon}\right\}  \right]  \leq
\inf_{(\phi,\mathrm{P})\in\mathcal{V}}\left[  \frac{1}{2}\int_{\mathcal{Z}%
\times\mathcal{Y}\times\lbrack0,1]}\left\Vert z\right\Vert ^{2}\mathrm{P}%
(dzdydt)+h(\phi)\right]  . \label{Eq:LaplacePrincipleUpperBound}%
\end{equation*}
Define
\[
I(\mathrm{P})=\frac{1}{2}\int_{\mathcal{Z}\times\mathcal{Y}\times\lbrack0,1]}
\left\Vert z\right\Vert ^{2}\mathrm{P}(dzdydt).
\]
Let $\eta>0$ be given and consider $(\psi,\bar{\mathrm{P}})\in\mathcal{V}$
with $\psi_{0}=x_{0}$ such that
\[
I(\bar{\mathrm{P}})+h(\psi)\leq\inf_{(\phi,\mathrm{P})\in\mathcal{V}}\left[
I(\mathrm{P})+h(\phi)\right]  +\eta<\infty.
\]
We claim that there is a family of controls $\{\bar{u}^{\epsilon}%
,\epsilon>0\}$ such that
\[
(\bar{X}^{\epsilon},\bar{\mathrm{P}}^{\epsilon,\Delta})\overset{\mathcal{D}%
}{\rightarrow}(\bar{X},\bar{\mathrm{P}}) \text{ and } \bar{X}=\psi\text{
w.p.}1,
\]
where $(\bar{X}^{\epsilon},\bar{\mathrm{P}}^{\epsilon,\Delta})$ is constructed
using $\bar{u}^{\epsilon}$. With this at hand the result easily follows.

The claim follows from the results in Section 3 in \cite{Gaitsgory}
and Section 4 in \cite{BorkarGaitsgory}. Note that in the case
considered here,  the fast motion is restricted to remain in a
compact set at all times, the dynamics are affine in the control,
$\sigma$ is uniformly nondegenerate and the functions $c$ and
$\sigma$ do not depend on $x$. For the construction of the control
and precise statements we refer the reader to
\cite{Gaitsgory,BorkarGaitsgory}.
\end{proof}

\begin{remark}
\label{R:MultivaluedMapRegime3_2}

\begin{enumerate}
\item {The difficulties that arise in Regime $3$ are due to the fact that one
has to average with respect to a first order operator. In this case uniqueness
of an invariant measure is not guaranteed and is actually difficult to verify
in practice.}

\item Suppose that {the functions $c$ and $\sigma$ depend on $x$ as well. It
turns out that under some additional Lipschitz type conditions in $x$, one can
still use the methodology in \cite{Gaitsgory,BorkarGaitsgory}. These
conditions are automatically satisfied for any admissible control if the
functions $c(x,y)$ and $\sigma(x,y)$ do not depend on $x$. However, we were
unable to verify them when the coefficients depend on $x$ without imposing any
further restrictions on the class of controls under consideration. For a more
detailed discussion see \cite{Gaitsgory,BorkarGaitsgory}.}
\end{enumerate}
\end{remark}

\subsection{Regime 3: An alternative expression for the rate function in
dimension $d=1$.}

\label{SS:Regime3alternativeFormula}

In this subsection we give an alternative expression of the rate function for
Regime $3$ in dimension $d=1$. The proof is analogous to the proof of the
statement for Regime $2$. We therefore only state the result without proving
it. The reason one can prove the LDP for $d=1$ with the coefficients depending
on $x$ is that the invariant measure takes an explicit form. Then, the local
rate function is the value function to a calculus of variations problem which
can be analyzed by standard techniques. In particular, because everything can
be written explicitly, we can easily prove that the infimum of this
variational problem is attained at a control $\bar{u}$ for which the
corresponding ODE has a unique invariant measure.

Consider a control $v(\cdot):\mathcal{Y}\mapsto\mathbb{R}$. Without loss of
generality one can restrict attention to controls that give nonzero velocity
everywhere. The control $v$ might depend on $(t,x)$ as well, but we omit
writing it for notational convenience. Decomposing the limiting occupation
measure as stochastic kernels (as it was done for Regimes $1$ and $2$) and
fixing the velocity $\beta=\dot{\phi}_{t}$, equations
(\ref{Eq:AccumulationPointsProcessViable}) and
(\ref{Eq:AccumulationPointsMeasureViable}) with $(\lambda,\mathcal{L}%
_{z,x})=(\lambda_{3},\mathcal{L}_{z,x}^{3})$ imply that the corresponding
invariant measure $\mu_{v}(dy)$ that satisfies
(\ref{Eq:AccumulationPointsMeasureViable}) takes the form
\[
\mu_{v}(dy)=\frac{\beta}{c(x,y)+\sigma(x,y)v(y)}dy.
\]
For $x,\beta\in\mathbb{R}$ define
\[
J_{x,\beta}(v)=\frac{1}{2}\int_{\mathbb{T}} |v(y)|^{2}\frac{\beta
}{c(x,y)+\sigma(x,y)v(y)}dy,
\]
and the local rate function
\[
L^{o}_{3}(x,\beta)=\inf_{v}\left\{  J_{x,\beta}(v): \int_{\mathbb{T}}
\frac{\beta}{c(x,y)+\sigma(x,y)v(y)}dy=1\right\}  .
\]

\begin{theorem}
\label{T:MainTheorem4} Assume Condition \ref{A:Assumption1} and that we are
considering Regime $3$. Let $\{X^{\epsilon},\epsilon>0\}$ be the
$1$-dimensional diffusion process that satisfies (\ref{Eq:LDPandA1}). Then
$\{X^{\epsilon},\epsilon>0\}$ satisfies the large deviations principle with
rate function
\[
S(\phi)=%
\begin{cases}
\int_{0}^{1}L_{3}^{o}(\phi_{s},\dot{\phi}_{s})ds & \text{if }\phi
\in\mathcal{C}([0,1];\mathbb{R})\text{ is absolutely continuous}\\
+\infty & \text{otherwise.}%
\end{cases}
\]

\end{theorem}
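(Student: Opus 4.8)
The plan is as follows. The Laplace principle lower bound for Regime~3 in arbitrary dimension, and the compactness of the level sets of $S^{3}$ in (\ref{Eq:GeneralRateFunction}), are already established (Section~\ref{S:LowerBoundLowerSemicontinuity}). For $d=1$ two things remain: (a)~the Laplace principle upper bound allowing $b,c,\sigma$ to depend on $x$, and (b)~the identification $S^{3}(\phi)=\int_{0}^{1}L_{3}^{o}(\phi_{s},\dot{\phi}_{s})\,ds$ with the explicit local rate function of the statement. Both would be handled in close analogy with Regime~2 (Section~\ref{S:LaplacePrincipleRegime2}), the simplification being that in dimension one the relevant invariant measure is available in closed form, dispensing with the ergodic-control analysis of Subsection~\ref{S:BoundedOptimalControlRegime2}.

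For~(b): since the last marginal of any viable $\mathrm{P}$ is Lebesgue measure, write $\mathrm{P}(dz\,dy\,dt)=\mathrm{P}_{t}(dz\,dy)\,dt$; the viability constraints (\ref{Eq:AccumulationPointsProcessViable})--(\ref{Eq:AccumulationPointsFullMeasureViable}) then become time-integrals of pointwise-in-$t$ conditions, and a standard measurable selection together with the mollification argument of Subsection~6.5 of \cite{DupuisEllis} (exactly as in Regimes~1 and~2) gives $S^{3}(\phi)=\int_{0}^{1}L_{3}^{r}(\phi_{s},\dot{\phi}_{s})\,ds$, where $L_{3}^{r}(x,\beta)=\inf\{\frac{1}{2}\int\|z\|^{2}\mathrm{P}(dz\,dy)\}$ over $\mathrm{P}\in\mathcal{P}(\mathcal{Z}\times\mathcal{Y})$ with $\int\mathcal{L}^{3}_{z,x}f(y)\,\mathrm{P}(dz\,dy)=0$ for all periodic $f\in\mathcal{C}^{1}(\mathcal{Y})$ and $\beta=\int\lambda_{3}(x,y,z)\,\mathrm{P}(dz\,dy)$. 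Because $\mathcal{L}^{3}_{z,x}$ and $\lambda_{3}$ are affine in $z$, Jensen's inequality applied to $v(y)=\int z\,\eta(dz|y)$ for the disintegration $\mathrm{P}(dz\,dy)=\eta(dz|y)\mu(dy)$ shows $L_{3}^{r}=L_{3}^{o}$, the infimum over ordinary feedbacks. Specializing to $d=1$, the stationarity condition $\int_{\mathbb{T}}[c(x,y)+\sigma(x,y)v(y)]f'(y)\,\mu(dy)=0$ for all periodic $f\in\mathcal{C}^{1}$ forces the distributional identity $\frac{d}{dy}\big([c(x,y)+\sigma(x,y)v(y)]m(y)\big)=0$ for the density $m$ of $\mu$, hence $[c(x,y)+\sigma(x,y)v(y)]m(y)\equiv\beta'$ for a constant $\beta'$ and $\mu_{v}(dy)=\frac{\beta'}{c(x,y)+\sigma(x,y)v(y)}\,dy$ with $\int_{\mathbb{T}}\mu_{v}=1$ the normalization; since $|\mathbb{T}|=1$, the velocity constraint $\beta=\int_{\mathbb{T}}\lambda_{3}(x,y,v(y))\,\mu_{v}(dy)=\beta'\int_{\mathbb{T}}dy=\beta'$ is automatic. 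Thus the constraint set is exactly $\{v:\int_{\mathbb{T}}\frac{\beta}{c(x,y)+\sigma(x,y)v(y)}dy=1\}$, the cost is $J_{x,\beta}(v)$, and $L_{3}^{o}(x,\beta)$ is the expression in the theorem, so $S^{3}(\phi)=\int_{0}^{1}L_{3}^{o}(\phi_{s},\dot{\phi}_{s})\,ds$.

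For~(a): for fixed $(x,\beta)$ with $\beta\neq 0$ the problem $\inf_{v}\{J_{x,\beta}(v):\int_{\mathbb{T}}\frac{\beta}{c+\sigma v}dy=1\}$ is a scalar-constrained variational problem; introducing one Lagrange multiplier the Euler--Lagrange equation becomes algebraic in $v(y)$ at each $y$ and is solvable, producing an optimal feedback $\bar{u}_{\beta}(x,\cdot)$ that is bounded, Lipschitz in $y$, measurable in $(x,y,\beta)$, and for which $c(x,\cdot)+\sigma(x,\cdot)\bar{u}_{\beta}(x,\cdot)$ has constant sign; the associated ODE on $\mathbb{T}$ then has the \emph{unique} invariant measure $\bar{\mu}_{\bar{u}_{\beta}}(dy|x)=\frac{\beta}{c+\sigma\bar{u}_{\beta}}\,dy$, whose weak continuity in $x$ (and the continuity of $L_{3}^{o}$, with $L_{3}^{o}(x,0)$ obtained by continuity) is read off from the explicit formulas. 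The upper bound then follows the Regime~2 template: take $\psi$ with $\psi_{0}=x_{0}$ nearly optimal in $\inf_{\phi}[S(\phi)+h(\phi)]$; by continuity of $L_{3}^{o}$ mollify so $\dot{\psi}$ is piecewise constant; use the feedback control $\bar{u}^{\epsilon}(t)=\bar{u}_{\dot{\psi}_{t}}(\bar{X}^{\epsilon}_{t},\bar{X}^{\epsilon}_{t}/\delta)$; invoke averaging for the fast, drift-dominated dynamics (fast variable in the compact torus, dynamics affine in the control, $\sigma$ uniformly nondegenerate, and either $c,\sigma$ independent of $x$ or $d=1$ with the explicit formulas; cf.\ Remark~\ref{R:MultivaluedMapRegime3_2} and \cite{Gaitsgory,BorkarGaitsgory}) to obtain $\bar{X}^{\epsilon}\overset{\mathcal{D}}{\rightarrow}\psi$ and $\frac{1}{2}\int_{0}^{1}\|\bar{u}^{\epsilon}(t)\|^{2}dt\to S(\psi)$; then chain the inequalities through the representation Theorem~\ref{T:RepresentationTheorem} as in Regime~2.

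The step I expect to be the main obstacle is the reduction, inside $L_{3}^{o}$, to controls producing a non-vanishing velocity $c(x,\cdot)+\sigma(x,\cdot)v(\cdot)$: the operator $\mathcal{L}^{3}_{z,x}$ is first order, so in general there is no unique invariant measure and a vector field with zeros admits singular invariant measures (point masses at equilibria). One must therefore justify the ``without loss of generality'' reduction carefully --- for instance, by working with the relaxed/occupation-measure formulation and showing that configurations whose velocity vanishes or changes sign either carry infinite cost (through the $1/(c+\sigma v)$ weight) or may be approximated by non-degenerate ones at no extra cost, so that they do not affect the infimum. This is precisely the feature that forces the $x$-dependent case to be restricted to $d=1$; once it is settled, the remainder is a transcription of the Regime~1 and Regime~2 arguments with the explicit one-dimensional formulas in place of the Bellman-equation analysis of Subsection~\ref{S:BoundedOptimalControlRegime2}.
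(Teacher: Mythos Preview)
Your proposal is correct and follows precisely the route the paper indicates: the paper does not give a detailed proof of Theorem~\ref{T:MainTheorem4} but states that it is analogous to Regime~2, with the crucial simplification that in $d=1$ the invariant measure $\mu_{v}(dy)=\frac{\beta}{c(x,y)+\sigma(x,y)v(y)}\,dy$ is explicit, so the local rate function becomes a standard calculus-of-variations problem whose minimizer produces an ODE on $\mathbb{T}$ with a unique invariant measure. Your decomposition into (b) the identification $S^{3}=\int L_{3}^{o}$ via disintegration and Jensen, and (a) the upper bound via the Regime~2 feedback-control template with explicit formulas replacing the Bellman-equation analysis of Subsection~\ref{S:BoundedOptimalControlRegime2}, matches this exactly; you also correctly flag the reduction to non-vanishing velocity as the delicate point, which the paper acknowledges only with the phrase ``without loss of generality one can restrict attention to controls that give nonzero velocity everywhere.''
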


We conclude this section with the following corollary. As can be easily seen
from the form of $L_{3}^{o}$ in Theorem \ref{T:MainTheorem4}, in the case
$c(x,y)=0$ one obtains a closed form expression for the rate function.

\begin{corollary}
\label{C:MainTheorem5}In addition to the conditions of Theorem
\ref{T:MainTheorem4}, assume that $c(x,y)=0$. Then $\{X^{\epsilon}%
,\epsilon>0\} $ satisfies the large deviations principle with rate function
\[
S(\phi)=%
\begin{cases}
\frac{1}{2}\int_{0}^{1}|\dot{\phi}_{s}|^{2}\left\vert \int_{\mathbb{T}}%
(\sigma^{2}(\phi_{s},y))^{-1/2}dy\right\vert ^{2}ds & \text{if }\phi
\in\mathcal{C}([0,1];\mathbb{R})\text{ is absolutely continuous},\\
+\infty & \text{otherwise.}%
\end{cases}
\]

\end{corollary}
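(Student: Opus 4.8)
The plan is to start from Theorem~\ref{T:MainTheorem4}, which already asserts that $\{X^\epsilon,\epsilon>0\}$ satisfies the LDP with rate function $S(\phi)=\int_0^1 L_3^o(\phi_s,\dot\phi_s)\,ds$ on absolutely continuous paths. Hence it suffices to evaluate the one-dimensional local rate function $L_3^o(x,\beta)$ explicitly when $c(x,y)\equiv 0$ and then substitute $x=\phi_s$, $\beta=\dot\phi_s$. Fix $x\in\mathbb{R}$ and $\beta\in\mathbb{R}$. The case $\beta=0$ is trivial ($L_3^o(x,0)=0$), so assume $\beta\neq0$. Since $\sigma\sigma^T$ is uniformly nondegenerate and $\sigma(x,\cdot)$ is continuous on the connected torus $\mathbb{T}$, the function $\sigma(x,\cdot)$ has a constant sign, and without loss of generality we may take $\sigma(x,\cdot)>0$; likewise $\beta>0$ (the case $\beta<0$ being symmetric). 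For $\mu_v(dy)=\frac{\beta}{\sigma(x,y)v(y)}\,dy$ to be a probability measure one then needs $v(y)>0$ for every $y$, which is consistent with restricting attention to controls giving nonzero fast velocity everywhere, as noted before the statement.

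With $c\equiv0$ the functional simplifies, using $|v|^2/v=v$ for real $v\neq0$, to $J_{x,\beta}(v)=\frac{\beta}{2}\int_{\mathbb{T}}\frac{v(y)}{\sigma(x,y)}\,dy$, to be minimized over positive $v$ subject to $\beta\int_{\mathbb{T}}\frac{dy}{\sigma(x,y)v(y)}=1$. I would change variables to the fast velocity $w(y)=\sigma(x,y)v(y)>0$, which turns the problem into: minimize $\frac{\beta}{2}\int_{\mathbb{T}}\frac{w(y)}{\sigma^2(x,y)}\,dy$ subject to $\int_{\mathbb{T}}\frac{dy}{w(y)}=\frac1\beta$. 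The Cauchy--Schwarz inequality applied to $\sqrt{w}/\sigma$ and $1/\sqrt{w}$ gives $\bigl(\int_{\mathbb{T}}\sigma^{-1}\,dy\bigr)^2\le\bigl(\int_{\mathbb{T}}w\sigma^{-2}\,dy\bigr)\bigl(\int_{\mathbb{T}}w^{-1}\,dy\bigr)$, hence $\int_{\mathbb{T}}w\sigma^{-2}\,dy\ge\beta\bigl(\int_{\mathbb{T}}\sigma^{-1}\,dy\bigr)^2$, so that $L_3^o(x,\beta)\ge\frac{\beta^2}{2}\bigl(\int_{\mathbb{T}}(\sigma^2(x,y))^{-1/2}\,dy\bigr)^2$.

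For the matching upper bound I would exhibit the optimizer. Equality in Cauchy--Schwarz forces $w(y)\propto\sigma(x,y)$, i.e. $v(y)\equiv k$ constant, and the constraint fixes $k=\beta\int_{\mathbb{T}}(\sigma^2(x,y))^{-1/2}\,dy$. A direct substitution shows that $J_{x,\beta}$ evaluated at this $v$ equals the lower bound. Since $\bar v\equiv k$ is constant, hence bounded and Lipschitz in $y$, and $\sigma(x,\cdot)$ never vanishes, the associated ODE $\dot Y_t=k\,\sigma(x,Y_t)$ on $\mathbb{T}$ has a unique invariant measure, so $\bar v$ is admissible and the infimum defining $L_3^o(x,\beta)$ is attained; therefore $L_3^o(x,\beta)=\frac{\beta^2}{2}\bigl(\int_{\mathbb{T}}(\sigma^2(x,y))^{-1/2}\,dy\bigr)^2$.

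Substituting into $S(\phi)=\int_0^1 L_3^o(\phi_s,\dot\phi_s)\,ds$ then yields the claimed formula. The only point requiring genuine care is the sign bookkeeping in the reduction to strictly positive controls, together with the verification that the explicit constant minimizer lies in the admissible class underlying Theorem~\ref{T:MainTheorem4} (unique invariant measure for the limiting ODE); everything else is a one-line Cauchy--Schwarz estimate with an immediate equality case.
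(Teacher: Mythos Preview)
Your proposal is correct. The paper does not actually give a proof of this corollary: it simply remarks that the closed form ``can be easily seen from the form of $L_3^o$'' and states the result. Your argument---reducing the constrained variational problem to a Cauchy--Schwarz inequality after the substitution $w=\sigma v$, identifying the equality case as the constant control $\bar v\equiv \beta\int_{\mathbb{T}}|\sigma(x,y)|^{-1}\,dy$, and checking that this control is admissible (bounded, Lipschitz, with a unique invariant measure for the limiting ODE on $\mathbb{T}$)---is exactly the direct computation the paper has in mind, carried out in full.
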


\section{Acknowledgements}
We would like to thank Hui Wang for his initial involvement in this project.

\end{document}